\let\saved@bibitem\@bibitem\makeatother 
\let\@bibitem\saved@bibitem\makeatother 
\newtheorem{theorem}{Theorem}
\newtheorem{lemma}{Lemma}
\newtheorem{remark}{Remark}
\theoremstyle{definition}
\newtheorem{assume}{Assumption}
\newcommand{\ds}[1]{\ensuremath{\displaystyle{#1}}}
\newcommand{\func}[3]{\ensuremath{#1 : #2 \rightarrow #3}}
\newcommand{\norm}[1]{\ensuremath{\left\| #1 \right\|}}
\newcommand{\suchthat}{\mathrel{}\middle|\mathrel{}}
\newcommand{\optunc}[2]{\underset{#1}{\text{minimize}} ~~ #2}
\newcommand{\optconOne}[3]{
\begin{aligned}
& \underset{#1}{\text{minimize}}
& & #2 \\
& \text{subject to} & & #3
\end{aligned}}
\newcommand{\pder}[2]{\ensuremath{\frac{\partial #1}{\partial #2}}}
\newcommand{\oder}[2]{\ensuremath{D_{#2} #1}}
\newcommand{\Bcal}{\ensuremath{\mathcal{B}}}
\newcommand{\Ccal}{\ensuremath{\mathcal{C}}}
\newcommand{\Dcal}{\ensuremath{\mathcal{D}}}
\newcommand{\Ocal}{\ensuremath{\mathcal{O}}}
\newcommand{\Scal}{\ensuremath{\mathcal{S}}}
\newcommand{\Nbb}{\ensuremath{\mathbb{N} }}
\newcommand{\Qbb}{\ensuremath{\mathbb{Q} }}
\newcommand{\Rbb}{\ensuremath{\mathbb{R} }}
\newcommand\Abm{{\ensuremath{\bm{A}}}}
\newcommand\Bbm{{\ensuremath{\bm{B}}}}
\newcommand\Cbm{{\ensuremath{\bm{C}}}}
\newcommand\Dbm{{\ensuremath{\bm{D}}}}
\newcommand\Hbm{{\ensuremath{\bm{H}}}}
\newcommand\Kbm{{\ensuremath{\bm{K}}}}
\newcommand\Pbm{{\ensuremath{\bm{P}}}}
\newcommand\Qbm{{\ensuremath{\bm{Q}}}}
\newcommand\Rbm{{\ensuremath{\bm{R}}}}
\newcommand\Ubm{{\ensuremath{\bm{U}}}}
\newcommand\bbm{{\ensuremath{\bm{b}}}}
\newcommand\dbm{{\ensuremath{\bm{d}}}}
\newcommand\fbm{{\ensuremath{\bm{f}}}}
\newcommand\gbm{{\ensuremath{\bm{g}}}}
\newcommand\pbm{{\ensuremath{\bm{p}}}}
\newcommand\qbm{{\ensuremath{\bm{q}}}}
\newcommand\rbm{{\ensuremath{\bm{r}}}}
\newcommand\ubm{{\ensuremath{\bm{u}}}}
\newcommand\vbm{{\ensuremath{\bm{v}}}}
\newcommand\wbm{{\ensuremath{\bm{w}}}}
\newcommand\xbm{{\ensuremath{\bm{x}}}}
\newcommand\ybm{{\ensuremath{\bm{y}}}}
\newcommand\Pbold{\ensuremath{\mathbf{P}}}
\newcommand\lambdabold{{\ensuremath{\boldsymbol{\lambda}}}}
\newcommand\mubold{{\ensuremath{\boldsymbol{\mu}}}}
\newcommand\rhobold{{\ensuremath{\boldsymbol{\rho}}}}
\newcommand\phibold{{\ensuremath{\boldsymbol{\phi}}}}
\newcommand\psibold{{\ensuremath{\boldsymbol{\psi}}}}
\newcommand\xibold{{\ensuremath{\boldsymbol{\xi}}}}
\newcommand\Phibold{{\ensuremath{\boldsymbol{\Phi}}}}
\newcommand\Lambdabold{{\ensuremath{\boldsymbol{\Lambda}}}}
\newcommand\Psibold{{\ensuremath{\boldsymbol{\Psi}}}}
\newcommand\zerobold{\ensuremath{\mathbf{0}}}
\newcommand\onebold{\ensuremath{\mathbf{1}}}
\pgfplotsset{compat=1.9}
\pgfplotsset{select coords between index/.style 2 args={
    x filter/.code={
        \ifnum\coordindex<#1\fi
        \ifnum\coordindex>#2\fi
    }
}}
\tikzset{
 invisible/.style={opacity=0},
 visible on/.style={alt={#1{}{invisible}}},
 alt/.code args={<#1>#2#3}{%
   \alt<#1>{\pgfkeysalso{#2}}{\pgfkeysalso{#3}}
 },
}
\newcommand{\cdens}{\rho}
\newcommand{\ddens}{\rhobold}
\newcommand{\cparam}{\psi}
\newcommand{\dparam}{\psibold}
\newcommand{\dstvcL}{\ubm}
\newcommand{\resLu}{\rbm_\dstvcL}
\newcommand{\stiffmatL}{\Kbm}
\newcommand{\rhsvecL}{\fbm}
\newcommand{\asmblmatL}{\Pbm}
\newcommand{\dstvcH}{\phibold}
\newcommand{\resH}{\Rbm_\dstvcH}
\newcommand{\stiffmatH}{\Hbm}
\newcommand{\rhsvecH}{\bbm}
\newcommand{\asmblmatH}{\Qbm}
\begin{document}
\title{A globally convergent method to accelerate topology optimization using on-the-fly model reduction}

\author[rvt1]{Masayuki Yano\fnref{fn1}}
\ead{myano@utias.utoronto.ca}

\author[rvt2]{Tianci Huang\fnref{fn2}}
\ead{thuang5@nd.edu}

\author[rvt2]{Matthew J. Zahr\fnref{fn3}\corref{cor1}}
\ead{mzahr@nd.edu}

\address[rvt1]{Institute for Aerospace Studies, University of Toronto,
               Toronto, ON, M3H 5T6, Canada}
\address[rvt2]{Department of Aerospace and Mechanical Engineering, University
               of Notre Dame, Notre Dame, IN 46556, United States}
\cortext[cor1]{Corresponding author}

\fntext[fn1]{Assistant Professor, Institute for Aerospace Studies,
             University of Toronto}
\fntext[fn2]{Graduate Student, Department of Aerospace and Mechanical
             Engineering, University of Notre Dame}
\fntext[fn3]{Assistant Professor, Department of Aerospace and Mechanical
             Engineering, University of Notre Dame}

\begin{keyword} 
 topology optimization, %
 reduced-order model, %
 trust-region method, %
 minimum compliance, %
 on-the-fly sampling, %
 global convergence
\end{keyword}

\begin{abstract}
  We present a globally convergent method to accelerate density-based topology optimization using projection-based reduced-order models (ROMs) and trust-region methods.
  To accelerate topology optimization, we replace the large-scale finite element simulation, which dominates the computational cost, with ROMs that reduce the cost of objective function and gradient evaluations by orders of magnitude.
To guarantee convergence,
we first introduce a trust-region method that employs generalized trust-region constraints and prove it is globally convergent. 
We then devise a class of globally convergent ROM-accelerated topology
optimization methods informed by two theories: the aforementioned trust-region theory, which identifies the ROM accuracy conditions required to guarantee the method converges to a critical point of the original topology optimization problem; \textit{a posteriori} error estimation theory for projection-based ROMs, which informs ROM construction procedure to meet the accuracy conditions.  This leads to trust-region methods that construct and update the ROM on-the-fly during optimization; the methods are guaranteed to converge to a critical point of the original, unreduced topology optimization problem, regardless of starting point.
Numerical experiments on three different
structural topology optimization problems demonstrate the proposed reduced
topology optimization methods accelerate convergence to the optimal design
by up to an order of magnitude.
\end{abstract}


\maketitle


\section{Introduction}
\label{sec:intro}

Topology optimization enables the discovery of new, intricate, and non-intuitive structural designs, which can now be manufactured due to recent advances in additive manufacturing techniques~\cite{zegard_bridging_2016}. However, topology optimization remains computationally expensive; for instance, density-based methods
require billions of computational voxels to accurately represent complex geometries, and optimization requires hundreds of thousands of
computing hours \cite{aage_giga-voxel_2017}. The computational cost increases further for robust optimization that accounts for uncertainties in loading conditions or material properties, often rendering it intractable in practical engineering settings.

A typical topology optimization approach uses first-order optimization methods and requires hundreds of design iterations for convergence.  The dominant cost in each design iteration is the solution of the finite element problem associated with the (linear) elasticity equations.  Hence, one class of approaches to improve the efficiency of topology optimization
targets the (iterative) solver for the linear(ized) finite element
system. Amir~et~al.~\cite{amir_efficient_2010} and Choi~et~al.~\cite{choi_accelerating_2019} use inexact linear solves to
efficiently compute approximate finite element solutions, while
others have developed improved preconditioners, e.g., based on
substructuring~\cite{evgrafov_large-scale_2008},
multigrid~\cite{amir_multigrid-cg_2014}, and Krylov methods with
subspace recycling~\cite{wang_large-scale_2007}. Alternatively,
Rojas-Labanda~et~al.~\cite{rojas-labanda_efficient_2016} investigate
the use of second-order optimization methods to reduce the
number of design iterations. Nguyen~et~al.~\cite{nguyen_computational_2010}
introduce a multi-resolution approach to topology optimization that uses two different meshes for finite element analysis and geometry representation to reduce the analysis cost.  

Another class of approaches~\cite{yoon_structural_2010, gogu_improving_2015,
      nguyen_three-dimensional_2019, choi_accelerating_2019, xiao_fly_2020} to reduce the computational cost of topology optimization
is based on the concept of reanalysis~\cite{kirsch_structural_2001, zheng_approximate_2017,
      sun_efficient_2018, senne_approximate_2019}; i.e.,
 they recognize that the optimization problem is a ``many-query'' problem that requires the analysis of many closely related designs along the optimization path and use previous finite element solutions to reduce the cost of subsequent solutions.  Most of these methods are based on
projection-based reduced-order models (ROMs), which approximate the solution to the finite element problem using a low-dimensional reduced basis.
Yoon~\cite{yoon_structural_2010} uses ROMs in the
frequency domain, with a reduced basis consisting of eigenmodes, to reduce the cost of
optimizing the frequency response of structures.
Gogu~\cite{gogu_improving_2015} constructs ROMs on-the-fly,
i.e., during the topology optimization procedure, using previously computed
finite element solutions; the reduced basis is adapted when the finite
element residual exceeds a predefined tolerance.
Choi et~al.~\cite{choi_accelerating_2019} use a similar approach that adapts the
residual tolerance based on the first-order optimality criteria to ensure
more stringent accuracy requirements on the ROM are used near convergence;
they also investigate its efficient implementation for large-scale
problems. More recently, Xiao~et~al.~\cite{xiao_fly_2020} introduced a
similar method to \cite{gogu_improving_2015} that constructs a reduced basis
on-the-fly using proper orthogonal decomposition (POD)~\cite{sirovich_turbulence_1987} rather than Gram-Schmidt
orthogonalization. A commonality of these approaches is they have been
shown to effectively reduce the overall cost of topology optimization;
however, to our knowledge, they make no attempt to rigorously assess the impact of ROM approximation on the convergence or to design algorithms that guarantee convergence to an optimal solution of the original (unreduced) problem. 
This work addresses the need for rigorous convergence theories and algorithms to accelerate topology optimization using ROMs constructed on-the-fly.

The first contribution of this work is the development of a model
management framework \cite{alexandrov_approximation_2001}
and global convergence theory for optimization
problems with convex constraints based on generalized, error-aware
trust regions \cite{zahr_phd_2016}. Similar to (traditional) trust-region methods,
error-aware trust-region methods (approximately) solve an inexpensive
optimization problem associated with an approximate model within a trust
region to advance the trust-region
center toward the solution of the original optimization problem. The
key difference is that error-aware trust-region methods consider
a trust region based on the sublevel sets of
an error indicator for the approximation model, whereas
traditional trust regions are based on a notion of distance in the
design space \cite{conn_trust-region_2000}.
In this work, we generalize the error-aware trust-region method
in \cite{zahr_phd_2016}, developed for unconstrained optimization,
to problems with inexpensive convex constraints, which commonly
arise in topology optimization, e.g., box constraints on the design
variables and volume fraction bounds. We prove global convergence of the
method assuming (i) the value and gradient of the
approximation model at any iteration match the optimization
objective function at the corresponding trust-region center and (ii) the
trust-region constraint is the sublevel set of an asymptotic error
bound for the approximation model. While assumptions (i) on the accuracy
of the approximation model at the trust-region center can be weakened
considerably by appealing to more general trust-region theory
\cite{carter_global_1991,
      heinkenschloss_analysis_2002,
      kouri_trust-region_2013,
      heinkenschloss_matrix-free_2014,
      kouri_inexact_2014},
this turns out to be unnecessary; the assumptions are easy to satisfy if projection-based ROMs are used as the approximation model. 
This error-aware trust-region method provides a general framework
to develop globally convergent solvers for optimization
problems constrained by partial differential equations using
ROMs constructed on-the-fly, which leads to the second contribution
of this work.

The second key contribution of this work is the development of an
efficient, globally convergent topology optimization method that
uses ROMs as the approximation model
in the proposed error-aware trust-region framework.
At a given iteration, the ROM is
updated on-the-fly based on the solution and adjoint snapshots at all trust-region centers encountered and a POD compression procedure that guarantees the ROM satisfies the aforementioned accuracy requirements (i) and (ii) for the trust-region approximation model.
We will consider
two trust-region constraints: the traditional trust region
based on distance from the center and the sublevel sets
of the residual-based error indicator of the ROM.
The latter choice follows
the work in \cite{zahr_progressive_2015, zahr_phd_2016}
for general PDE-constrained optimization problems as well as the
work in \cite{gogu_improving_2015, choi_accelerating_2019, xiao_fly_2020}
that use the residual as an indicator to update the ROM.
We appeal to \textit{a posteriori} error estimation theory for projection-based ROMs (see, e.g., review paper~\cite{Rozza_2008_RB_Review} and textbook~\cite{Hesthaven_2016_RB_Book}) to prove that the procedure satisfies the criteria (i) and (ii) for global convergence in the error-aware trust-region framework; therefore, the method is guaranteed to converge to a local
minimum of the original optimization problem from an
arbitrary starting point. We use three benchmark topology
optimization problems to demonstrate global convergence of the
method and demonstrate speedups up to a factor of ten
relative to a standard topology optimization approach that
applies the method of moving asymptotes \cite{svanberg_method_1987}
to the original (unreduced) topology optimization problem.
This is not the first work to
embed on-the-fly ROMs in a trust-region setting, e.g.,
\cite{arian_trust-region_2000, arian_managing_2002, yue_accelerating_2013,
      agarwal_trust-region_2013, sachs_adaptive_2014,
      zahr_progressive_2015, zahr_phd_2016, qian_certified_2017},
or to use error-aware trust-regions
\cite{yue_accelerating_2013, zahr_progressive_2015,
      zahr_phd_2016, qian_certified_2017};
however, to our knowledge it is the first to do so in the context of
topology optimization and incorporate convex side constraints into
the error-aware trust region setting.


The remainder of the paper is organized as follows.
Section~\ref{sec:topopt} introduces a density-based topology
optimization formulation with Helmholtz filtering and the
adjoint method to compute gradients with respect to the
design variables.
Section~\ref{sec:rom} reviews projection-based reduced-order
models for the linear elasticity finite element system
and provides the associated error analysis
that is required to establish global convergence of the
proposed ROM-based topology optimization method.
Section~\ref{sec:tr} introduces the error-aware trust-region
method that utilizes generalized trust-region constraints and
establishes its global convergence theory. Then we use projection-based
reduced-order models and the error analysis from Section~\ref{sec:rom},
and introduce a procedure to construct a reduced basis from
state and adjoint snapshots, to define a class of globally convergent
ROM-based topology optimization methods. These new methods are
tested on a suite of benchmark topology optimization problems
in Section~\ref{sec:numexp}. Finally, Section~\ref{sec:concl}
offers conclusions.

\section{Density-based topology optimization with Helmholtz filtering}
\label{sec:topopt}
In this section, we present a computational procedure for topology optimization where we seek a structure that minimizes a given objective function (e.g., compliance) under a volume constraint.  Our presentation of topology optimization follows~\cite{sigmund_99_2001,andreassen_efficient_2011}.  We review a linear elasticity formulation (Section~\ref{sec:topopt:linelast}), review a Helmholtz filtering technique (Section~\ref{sec:topopt:helm}), state the topology optimization problem (Section~\ref{sec:topopt:opt}), derive the adjoint method to efficiently evaluate the objective function gradient (Section~\ref{sec:topopt:deriv}, \ref{sec:adjoint}), and analyze the cost of the optimization procedure (Section~\ref{sec:topopt:comp}).

\subsection{Linear elasticity}
\label{sec:topopt:linelast}
We first review the finite element approximation of the linear elasticity equation for density-based topology optimization. We represent the geometry of the structure by a density distribution over a $d$-dimensional design domain, where the (near) zero and unity density indicate the absence and presence, respectively, of the material in a given region. To this end, we partition our design domain $\Omega$ into $N_e$ non-overlapping polygonal elements, $\{ \Omega_e \}_{e=1}^{N_e}$.  This partition, or mesh, is delineated by $N_v$ vertices. The element-wise discontinuous density is represented as a vector $\ddens \in P \subset \Rbb^{N_e}$ with entries $\ddens = (\rho_1, \dots, \rho_{N_e})^T$, where $\rho_e$ is the density of element $e$ and $P \coloneqq [\rho_l,1]^{N_e} \subset \Rbb^{N_e}$ is the space of admissible densities for a lower bound $\rho_l = 10^{-3}$. The volume constraint is expressed as 
\begin{equation}
  \label{eq:vol_const}
 \sum_{e=1}^{N_e} \rho_e |\Omega_e| \leq V,
\end{equation}
where $| \Omega_e |$ is the volume of element $e$.

We model the response of the structure under the specified load condition using linear elasticity.
To this end, we introduce a density-dependent stiffness matrix $\Kbm(\rhobold) \in \Rbb^{N_\dstvcL \times N_\dstvcL}$ and a (fixed) load vector $\fbm\in\Rbb^{N_\dstvcL}$ associated with a bi-linear (in two dimensions) finite element approximation of the linear elasticity equations, where $N_\dstvcL$ is the number of global finite element degrees of freedom. 
To show the explicit dependence of the stiffness matrix on the density, we introduce the assembly operator $\asmblmatL_e \in \Rbb^{N_\dstvcL \times N_\dstvcL^e}$ such that $\vbm_e = \asmblmatL_e^T \vbm$, where $\vbm_e \in \Rbb^{N_\dstvcL^e}$ is the element displacement vector and $\vbm \in \Rbb^{N_\dstvcL}$ is the global displacement vector, for element $e = 1,\dots,N_e$. 
The stiffness matrix and load vector are represented in
terms of their element contributions as
\begin{equation} \label{eqn:linelast-res1}
 \stiffmatL(\ddens) = \sum_{e=1}^{N_e} \alpha(\cdens_e)
                      \asmblmatL_e \stiffmatL_e\asmblmatL_e^T,
 \qquad
 \rhsvecL = \sum_{e=1}^{N_e} \asmblmatL_e\rhsvecL_e,
\end{equation}
where $\alpha(\cdens_e) \coloneqq \rho_l + (1-\rho_l) \cdens_e^p$ is the material property model, $\stiffmatL_e\in\Rbb^{N_\dstvcL^e \times N_\dstvcL^e}$ is the element stiffness matrix for unity density, and $\rhsvecL_e\in\Rbb^{N_\dstvcL^e}$ is the element load vector. Both $\stiffmatL_e$ and $\rhsvecL_e$ are independent of $\rhobold$. The exponent $p$ is the penalization parameter; we choose $p=3$ which ensures good convergence to 0-1 solutions~\cite{Sigmund_2013_TO_Review}. If all
elements are the same size and shape, all element stiffness matrices are identical; this is the case for typical density-based topology optimization problems~\cite{Sigmund_2013_TO_Review}. 

Given element-wise density $\rhobold \in P$, the finite element solution $\ubm^\ast(\rhobold) \in \Rbb^{N_\dstvcL}$ must satisfy
\begin{equation}
  \label{eqn:linelastsoln}
  \resLu(\dstvcL^\ast(\rhobold); \ddens) = 0;
\end{equation}
here $\resLu: \Rbb^{N_\dstvcL} \times P \to \Rbb^{N_\dstvcL}$  is the residual operator associated with the force-equilibrium condition given by
\begin{equation} \label{eqn:linelastU}
 \resLu(\dstvcL; \ddens) \coloneqq \stiffmatL(\ddens)\dstvcL - \rhsvecL.
\end{equation}
While not explicitly stated, the degrees of freedom associated with homogeneous Dirichlet boundary conditions are eliminated in the definition of the residual, following the standard finite element formulation.

\subsection{Helmholtz density filter}
\label{sec:topopt:helm}
As noted in numerous works and summarized in a review paper~\cite{Sigmund_2013_TO_Review}, the topology optimization problem is ill-posed if we choose the density field as the design variable $\rhobold \in P$, which defines the elemental density of the stiffness matrix $\Kbm(\rhobold)$.  To stabilize the formulation, we choose a separate ``unfiltered'' density field $\psibold$ as the design variable, which implicitly defines the ``filtered'' density $\rhobold \in P$.  The unfiltered density $\psibold \in \Psi \subset \Rbb^{N_e}$ is expressed as $\psibold = (\psi_1, \dots, \psi_{N_e})^T$, where $\psi_e$ is the unfiltered density associated with element $e$ and $\Psi \coloneqq [0,1]^{N_e}$
is the space of admissible unfiltered densities. There exists many filtering procedures~\cite{Sigmund_2013_TO_Review}; in this work we consider the so-called Helmholtz filter~\cite{lazarov_filters_2011} projected onto the space of element-wise constant functions to preserve the structure of the stiffness matrix in (\ref{eqn:linelast-res1}).

We provide a brief description of the Helmholtz filter; we refer to~\cite{lazarov_filters_2011} for details.  The main idea behind the Helmholtz filter is to relate the (function representation of) the unfiltered density field $\psi$ and the filtered density field $\rho$ by the Helmholtz equation $- r^2 \Delta \phi + \phi = \psi$ with homogeneous Neumann boundary condition, where $r = 2\sqrt{3}R$ and $R$ is the characteristic radius of the filter. To this end, given an elemental unfiltered density vector $\psibold \in \Psi \subset \Rbb^{N_e}$, we first compute nodal filtered density $\phibold^\star(\psibold) \in \Rbb^{N_v}$ that satisfies a finite element approximation of the Helmholtz equation:
\begin{equation}
  \label{eqn:helm-soln}
  \resH(\dstvcH^\star(\dparam); \dparam) = 0;
\end{equation}
here $\resH : \Rbb^{N_v} \times \Psi \to \Rbb^{N_v}$ is the Helmholtz residual operator given by
\begin{equation} \label{eqn:helm-res0}
 \resH(\dstvcH; \dparam) \coloneqq \stiffmatH\dstvcH - \rhsvecH(\dparam),
\end{equation}
where $\Hbm \in \Rbb^{N_v \times N_v}$ is the assembled Helmholtz ``stiffness matrix'', and $\func{\bbm}{\Psi}{\Rbb^{N_v}}$ is an operator that maps
the unfiltered density vector to the Helmholtz ``load vector''. To show the
explicit dependence of the load vector on the density,
we introduce the Helmholtz assembly operator $\asmblmatH_e \in \Rbb^{N_v \times N_v^e}$ such that $\wbm_e = \asmblmatH_e^T \wbm$,  where $\wbm_e \in \Rbb^{N_v^e}$ is the element vector and $\wbm \in \Rbb^{N_v}$ is the global vector, for element $e = 1,\dots, N_e$.
 The stiffness matrix and load vector are represented in terms of their element contributions as
\begin{equation} \label{eqn:helm-res1}
 \stiffmatH = \sum_{e=1}^{N_e} \asmblmatH_e \stiffmatH_e\asmblmatH_e^T,
 \qquad
 \rhsvecH(\dparam) = \sum_{e=1}^{N_e} \cparam_e\asmblmatH_e\rhsvecH_e,
\end{equation}
where $\stiffmatH_e\in\Rbb^{N_v^e\times N_v^e}$ is the element Helmholtz stiffness
matrix and $\rhsvecH_e\in\Rbb^{N_v^e}$ is the element Helmholtz load vector,
both of which are independent of $\psibold$. If all
elements are the same size and shape, all element stiffness matrices are identical.  Given the nodal filtered density $\phibold \in \Rbb^{N_v}$, the elemental filtered density $\rhobold^\star(\phibold) \in P$ is given by an element-wise nodal averaging operator $\rhobold^\ast: \Rbb^{N_v} \to P$ defined as
\begin{equation}
  \label{eqn:phitorho}
  \rho_e^\ast(\phibold)
  \coloneqq
  \frac{1}{N_v^e} \onebold^T\phibold_e
 = \frac{1}{N_v^e} \onebold^T\asmblmatH_e^T\phibold , \qquad e = 1,\dots,N_e.
\end{equation}
We also introduce the filtering operator $\rhobold^\star: \Psi \to P$ from elemental unfiltered density $\psibold \in \Psi$ to elemental filtered density $\rho \in P$ given by
\begin{equation} \label{eqn:cdens-star}
  \cdens_e^\star(\psibold)
  \coloneqq
  \cdens_e^\ast(\phibold^\star(\psibold))
  =
  \frac{1}{N_v^e}\onebold^T\asmblmatH_e^T\dstvcH^\star(\psibold), \qquad e = 1,\dots,N_e. 
\end{equation}

Before we conclude this section, we recall that we wish to impose volume constraint \eqref{eq:vol_const} on the \emph{filtered} density $\rhobold$, which describes the physically relevant density distribution. 
Owing to the volume preservation property of the Green operator of the
Helmholtz equation~\cite{lazarov_filters_2011}, this volume constraint on the \emph{filtered} density field $\rhobold$ is satisfied as long as the \emph{unfiltered} density field $\psibold$ satisfies the identical volume constraint
\begin{equation*}
 \sum_{e=1}^{N_e} \psi_e |\Omega_e| \leq V.
\end{equation*}
Hence, the volume constraint can be imposed as a linear constraint on the \emph{unfiltered} density field.

\begin{remark}
    The filter used in this work differs from the original Helmholtz filter~\cite{lazarov_filters_2011} in that the nodal filtered density in $H^1(\Omega)$ is projected onto the space of element-wise constant functions in $L^2(\Omega)$. The modification is made to preserve the structure of the stiffness matrix in (\ref{eqn:linelast-res1}) and to facilitate the ROM matrix assembly.  However, the convergence theory for topology optimization in~\cite{lazarov_filters_2011}, which is an application of the theory in~\cite{bourdin_filters_2001}, does not apply to the present filter as not all regularity conditions are satisfied.  Nevertheless, in practice, we did not observe convergence issues in our numerical studies.
\end{remark}


\subsection{Optimization problem}
\label{sec:topopt:opt}
We now introduce an objective function $\func{j}{\Rbb^{N_\ubm} \times P}{\Rbb}$. Our formulation will treat general objective functions, but we assume that the function is twice continuously differentiable to develop convergence proofs.  One objective function that is of particular interest is the compliance output $\func{j_c}{\Rbb^{N_\ubm} \times P}{\Rbb}$ given by
\begin{equation*}
  j_c(\ubm; \rhobold) \coloneqq  \fbm^T\ubm ,
\end{equation*}
where $\fbm$ is the load vector given by~\eqref{eqn:linelast-res1}.

For notational convenience, we introduce the mapping $\func{\dstvcL^\star}{\Psi}{\Rbb^{N_\ubm}}$ from the unfiltered density $\psibold$ to the unconstrained elasticity displacements $\dstvcL^\star(\psibold)$ given by $\dstvcL^\star(\psibold) \coloneqq \dstvcL^\ast(\rhobold^\star(\psibold))$.  Note that $\dstvcL^\star(\psibold)$ satisfies $\resLu(\dstvcL^\star(\psibold),\ddens^\star(\dparam)) = \zerobold$ for any $\dparam \in \Psi$.  In the remainder, we will refer to the mapping $\ubm^\star$ as the (primal) high-dimensional model (HDM). We also introduce the topology optimization objective function $\func{J}{\Psi}{\Rbb}$ that maps the unfiltered density $\psibold \in \Psi$ to the output of the objective function $j : \Rbb^{N_\ubm} \times P \to \Rbb$ that properly accounts for the solution of the elasticity and Helmholtz equations, i.e.,
\begin{equation}
  \label{eq:Jfom}
  J(\dparam) \coloneqq j(\dstvcL^\star(\dparam), \ddens^\star(\dparam)).
\end{equation}
The final form of the topology optimization problem is as follows:
\begin{equation}\label{eqn:topopt0}
 \optconOne{\psibold\in \Psi \coloneqq [0,1]^{N_e}}
           {J(\psibold)}
           {\sum_{e=1}^{N_e} \psi_e |\Omega_e| \leq V.}
\end{equation}


\subsection{Derivative computations via the adjoint method}
\label{sec:topopt:deriv}

Given the large number of design variables in topology
optimization problems, its efficient solution requires gradient-based optimization methods. That is, in addition to the objective
function $J : \Psi \to \Rbb$, we also require its gradient
$\func{\nabla J}{\Psi}{\Rbb^{N_e}}$.  We here present an adjoint-based gradient expression; the derivation is provided in \ref{sec:adjoint}. (Throughout this work, we adhere to the convention where the partial derivative of scalar-valued and vector-valued functions yield a row vector and a matrix, respectively.)  The gradient can be expressed as
\begin{equation}  \label{eq:pap_sensitivity}
  \pder{J}{\psi_e} =
 \pder{\rhobold^\star}{\psi_e}^T
 \left(\pder{j}{\rhobold}^T-\pder{\rbm_\ubm}{\rhobold}^T
 \lambdabold^\star(\psibold) \right), \quad e = 1,\dots, N_e;
\end{equation}
here
\begin{equation}
  \pder{\rbm_\ubm}{\rho_e}^T  \lambdabold^\star(\psibold)
  = \alpha'(\rho_e)  \ubm^\star(\psibold)^T \Pbm_e \Kbm_e \Pbm_e^T  \lambdabold^\star(\psibold), \quad e = 1,\dots, N_e,
\end{equation}
where $\alpha'(\rho_e) = (1-\rho_l) p \rho_e^{p-1}$, and $\lambdabold^\star: \Rbb^{N_e} \to \Rbb^{N_\ubm}$ is the adjoint of the linear elasticity equation that satisfies
\begin{equation} \label{eqn:linelast-adj1}
  \Kbm(\rhobold^\star(\psibold))^T\lambdabold^\star(\psibold) =
  \pder{j}{\ubm}(\ubm(\psibold),\psibold)^T,
\end{equation}
and the application of $\pder{\rhobold^\star}{\psi_e}^T$ to any vector $\vbm \in \Rbb^{N_e}$ is given by
\begin{equation*}
  \pder{\rhobold^\star}{\psi_e}^T\vbm
  = -\bbm_e^T\Qbm_e^T\mubold(\vbm), \quad e = 1,\dots, N_e,
\end{equation*}
where $\mubold: \Rbb^{N_e} \to \Rbb^{N_v}$ is the adjoint of the Helmholtz equation that satisfies
\begin{equation} \label{eqn:helm-adj}
 \Hbm^T\mubold(\vbm) = \qbm(\vbm)
\end{equation}
for an averaging operator $\qbm : \Rbb^{N_e} \to \Rbb^{N_v}$ given by $\qbm(\vbm) = \sum_{e=1}^{N_e} \frac{v_e}{N_v^e} \Qbm_e\onebold$.  (We again note that the degrees of freedom associated with Dirichlet boundary conditions are eliminated in \eqref{eqn:linelast-adj1}.) The partial derivatives of
$\func{j}{\Rbb^{N_\ubm}\times\Rbb^{N_e}}{\Rbb}$ can be derived
analytically from the expression for $j(\ubm, \rhobold)$.  For instance, for the compliance output, we have
\begin{equation} \label{eqn:min-compl-pders}
 \pder{j_c}{\ubm}(\ubm, \rhobold)^T = \fbm, \qquad
 \pder{j_c}{\rhobold}(\ubm, \rhobold)^T = \zerobold.
\end{equation}
This shows that the primary operations required to compute the gradient
are an elasticity adjoint solve (to compute $\lambdabold^\star$)
and a Helmholtz adjoint solve (to apply $\ds{\pder{\rhobold^\star}{\psibold}}$
to a vector). We make two additional remarks.

\begin{remark}
 The linear elasticity and Helmholtz equations are self-adjoint and therefore
 their stiffness matrices are symmetric positive definite, which makes the
 transpose operations on $\Kbm$ and $\Hbm$ in
 \eqref{eqn:helm-adj} and \eqref{eqn:linelast-adj1}
 unnecessary.
\end{remark}

\begin{remark}
  \label{rem:topopt_compl}
 In the special case of compliance minimization, the gradient computation
 simplifies significantly. Due to the special form of the compliance
 function in \eqref{eqn:min-compl-pders} where
 $\ds{\pder{j}{\ubm}^T = \fbm}$ and the symmetry of the matrix $\Kbm$,
 the adjoint system in \eqref{eqn:linelast-adj1} is identical to the
 primal system in \eqref{eqn:linelastU} and therefore we have
 \begin{equation}
  \lambdabold^\star(\psibold) = \ubm^\star(\psibold).
 \end{equation}
 Due to this equivalence between the primal and adjoint elasticity state in
 this special case, the adjoint solve would be redundant. Furthermore, the
 compliance does not directly depend on the density field
 \eqref{eqn:min-compl-pders} and the gradient reduces to
 \begin{equation}
  \nabla J = -\pder{\rhobold^\star}{\psibold}^T
              \pder{\rbm_\ubm}{\rhobold}^T\ubm^\star.
 \end{equation}
 Thus the gradient computation only involves a Helmholtz adjoint solve
 to apply $\ds{\pder{\rhobold^\star}{\psibold}}$ to a vector. 
\end{remark}


\subsection{Computational cost}
\label{sec:topopt:comp}
To close this section, we analyze the cost of topology optimization.  For simplicity, we focus on compliance minimization problems for which $\lambdabold^\star(\psibold) = \ubm^\star(\psibold)$ and hence no explicit computation of the adjoint is required; see Remark~\ref{rem:topopt_compl}.  We decompose the mapping $\ubm^\star : \Psi \to \Rbb^{N_\ubm}$ from the (unfiltered) density $\psibold \in \Psi$ to the linear elasticity displacement $\ubm^\star(\psibold)$ into five steps, and analyze the cost from both the theoretical and practical perspectives; the practical assessment is based on the absolute and fractional run-time result shown in Figure~\ref{fig:timings_hdm} for a typical two-dimensional topology optimization problem (MBB beam; Section~\ref{sec:numexp:mbb}). 
\begin{enumerate}
\item The computation of $\bbm(\psibold) \in \Rbb^{N_v}$ by \eqref{eqn:helm-res1}.  This vector assembly requires $\Ocal(N_e)$ operations.  Figure~\ref{fig:timings_hdm} shows that in practice this cost is negligible.
\item The solution of the Helmholtz equation~\eqref{eqn:helm-soln} for $\phibold \in \Rbb^{N_v}$. Because the Helmholtz stiffness matrix $\Hbm \in \Rbb^{N_v \times N_v}$ does not depend on the design variable $\psibold \in \Psi$, the Cholesky factorization of the matrix can be computed for once and for all.  Given the Cholesky factors associated with a minimal-fill ordering, the marginal cost of each Helmholtz solve is $\Ocal(N_e \log(N_e))$ and $\Ocal(N_e^{4/3})$ in two- and three-dimensions, respectively. Figure~\ref{fig:timings_hdm} shows that in practice the cost grows superlinearly but is negligible.
\item The computation of $\rhobold^\ast(\phibold) \in P$ by \eqref{eqn:phitorho}. The vector assembly again requires $\Ocal(N_e)$ operations.  Figure~\ref{fig:timings_hdm} shows that in practice this cost is negligible.
\item The assembly of the linear elasticity stiffness matrix $\Kbm(\rhobold)$ by \eqref{eqn:linelast-res1}.  The assembly is done in two steps: the evaluation of the density-weighted elemental stiffness matrices $\alpha(\rho_e) \Kbm_e$; the assembly of the matrices into a sparse matrix. While both operations are $\Ocal(N_e)$ in theory, the latter operation dominates the run time on a modern computational platform as the operation requires unstructured memory access.  Figure~\ref{fig:timings_hdm} shows that this is one of the dominant costs of the map $\ubm^\star : \Psi \to \Rbb^{N_\ubm}$ in practice.
\item The solution of the linear elasticity system \eqref{eqn:linelastsoln} for $\ubm^\ast(\rhobold) \in \Rbb^{N_\ubm}$.  The operation is carried out using a sparse Cholesky solver. Using a minimal-fill ordering, the cost of the linear solve is $\Ocal(N_e^{3/2})$ and $\Ocal(N_e^2)$ in two- and three-dimensions, respectively.  Figure~\ref{fig:timings_hdm} shows that this is the dominant cost of the map $\ubm^\star : \Psi \to \Rbb^{N_\ubm}$ in practice, especially as the problem size increases.
\end{enumerate}
The cost analysis shows that the assembly and solution of the linear elasticity system accounts for roughly 98\% of the cost in computing $\ubm^\star : \Psi \to \Rbb^{N_\ubm}$ and the cost of Helmholtz filtering is negligible.  In Section~\ref{sec:rom} we introduced a surrogate model to accelerate the solution of the linear elasticity problem.


\begin{figure}
 \centering
 \begin{tikzpicture}
\begin{groupplot} [
group style={group size = 2 by 1, horizontal sep = 2cm, vertical sep = 1.5cm}]
\nextgroupplot[width=0.45\textwidth, xlabel={Number of elements}, xmax=150000.0, ylabel={Runtime (s)}, xmin=1000.0, ymode=log, ymin=1e-05, xmode=log]
\addplot [solid, thick, color=black, mark=mark options={solid, thin}, mark=*, mark size=1.5, color=black]
coordinates {
( 1.20000000e+03,  7.50400000e-05)
( 1.87500000e+03,  5.31200000e-05)
( 7.50000000e+03,  3.05460000e-04)
( 3.00000000e+04,  6.70240000e-04)
( 1.20000000e+05,  2.55464000e-03)};\label{line:hdm:time_rhs_helm}

\addplot [solid, thick, color=blue, mark=mark options={solid, thin}, mark=diamond*, mark size=1.5, color=blue]
coordinates {
( 1.20000000e+03,  1.13340000e-04)
( 1.87500000e+03,  9.27000000e-05)
( 7.50000000e+03,  6.81160000e-04)
( 3.00000000e+04,  4.04590000e-03)
( 1.20000000e+05,  2.37543200e-02)};\label{line:hdm:time_solve_helm}

\addplot [solid, thick, color=red, mark=mark options={solid, thin}, mark=triangle*, mark size=1.5, color=red]
coordinates {
( 1.20000000e+03,  2.23660000e-04)
( 1.87500000e+03,  2.66200000e-05)
( 7.50000000e+03,  1.53540000e-04)
( 3.00000000e+04,  5.22200000e-04)
( 1.20000000e+05,  1.74060000e-03)};\label{line:hdm:time_rho}

\addplot [solid, thick, color=cyan, mark=mark options={solid, thin}, mark=square*, mark size=1.5, color=cyan]
coordinates {
( 1.20000000e+03,  6.47318000e-03)
( 1.87500000e+03,  8.87972000e-03)
( 7.50000000e+03,  3.99627800e-02)
( 3.00000000e+04,  1.46048800e-01)
( 1.20000000e+05,  6.80403620e-01)};\label{line:hdm:time_stiff_linelast}

\addplot [solid, thick, color=magenta, mark=mark options={solid, thin}, mark=pentagon*, mark size=1.5, color=magenta]
coordinates {
( 1.20000000e+03,  7.51846000e-03)
( 1.87500000e+03,  1.34559800e-02)
( 7.50000000e+03,  6.77943600e-02)
( 3.00000000e+04,  4.00154200e-01)
( 1.20000000e+05,  1.83468820e+00)};\label{line:hdm:time_solve_linelast}

\nextgroupplot[width=0.45\textwidth, xlabel={Number of elements}, ymax=0.8, xmax=150000.0, ylabel={Fraction of total runtime}, xmin=1000.0, ymin=0, xmode=log]
\addplot [solid, thick, color=black, mark=mark options={solid, thin}, mark=*, mark size=1.5, color=black, forget plot]
coordinates {
( 1.20000000e+03,  5.20977972e-03)
( 1.87500000e+03,  2.36003508e-03)
( 7.50000000e+03,  2.80502822e-03)
( 3.00000000e+04,  1.21543300e-03)
( 1.20000000e+05,  1.00452142e-03)};

\addplot [solid, thick, color=blue, mark=mark options={solid, thin}, mark=diamond*, mark size=1.5, color=blue, forget plot]
coordinates {
( 1.20000000e+03,  7.86882241e-03)
( 1.87500000e+03,  4.11851001e-03)
( 7.50000000e+03,  6.25506785e-03)
( 3.00000000e+04,  7.33695446e-03)
( 1.20000000e+05,  9.34054244e-03)};

\addplot [solid, thick, color=red, mark=mark options={solid, thin}, mark=triangle*, mark size=1.5, color=red, forget plot]
coordinates {
( 1.20000000e+03,  1.55279762e-02)
( 1.87500000e+03,  1.18268324e-03)
( 7.50000000e+03,  1.40995231e-03)
( 3.00000000e+04,  9.46972891e-04)
( 1.20000000e+05,  6.84429113e-04)};

\addplot [solid, thick, color=cyan, mark=mark options={solid, thin}, mark=square*, mark size=1.5, color=cyan, forget plot]
coordinates {
( 1.20000000e+03,  4.49411539e-01)
( 1.87500000e+03,  3.94511497e-01)
( 7.50000000e+03,  3.66976775e-01)
( 3.00000000e+04,  2.64849204e-01)
( 1.20000000e+05,  2.67544552e-01)};

\addplot [solid, thick, color=magenta, mark=mark options={solid, thin}, mark=pentagon*, mark size=1.5, color=magenta, forget plot]
coordinates {
( 1.20000000e+03,  5.21981882e-01)
( 1.87500000e+03,  5.97827275e-01)
( 7.50000000e+03,  6.22553176e-01)
( 3.00000000e+04,  7.25651436e-01)
( 1.20000000e+05,  7.21425955e-01)};

\end{groupplot}\end{tikzpicture}
 \caption{A decomposition of the run time to perform the mapping $\ubm^\star : \psibold \to \Rbb^{N_\ubm}$ for a typical two-dimensional problem:
   evaluation of $\rhsvecH(\dparam)$ (\ref{line:hdm:time_rhs_helm});
   solution of Helmholtz equation (\ref{line:hdm:time_solve_helm});
   evaluation of $\ddens^\star(\psibold)$ (\ref{line:hdm:time_rho});
   assembly of $\stiffmatL(\ddens)$ (\ref{line:hdm:time_stiff_linelast});
   solution of linear elasticity equation (\ref{line:hdm:time_solve_linelast}).}
 \label{fig:timings_hdm}
\end{figure}

\section{Projection-based model reduction for inexpensive approximation of
         linear elasticity equations}
\label{sec:rom}
In this section we introduce the projection-based reduced-order model (ROM) which will be used as the approximation model in our trust-region method. We present the Galerkin ROMs to approximate the solution $\ubm^\star(\psibold)$ and adjoint $\lambdabold^\star(\psibold)$ (Sections~\ref{sec:rom:gal} and \ref{sec:rom:adj}), residual-based error estimates for ROM solutions (Section~\ref{sec:rom:errest}), and the computational cost associated with the construction and evaluation of the ROM (Section~\ref{sec:rom:comp}).

\subsection{Galerkin reduced-order model for the primal solution}
\label{sec:rom:gal}
We first discuss the construction of the Galerkin ROM for the primal solution $\ubm^\star(\psibold)$. To this end, we introduce a reduced basis (matrix) $\Phibold_k \in \Rbb^{N_\dstvcL \times k}$; here, $k \leq N_\dstvcL$ formally, but $k \ll N_\dstvcL$ in practice.  (We will detail the construction of $\Phibold_k$ in Section~\ref{sec:tr:etr-topopt}; for now we assume it is given.)  We then represent the ROM solution $\ubm_k^\star(\psibold) \in \Rbb^{N_\dstvcL}$ as a linear combination of the reduced basis:  $\ubm_k^\star(\psibold) = \Phibold_k \hat \ubm_k^\star(\psibold)$.  The coefficients $\ubm_k^\star(\psibold)$ are given by the following residual statement: given $\psibold \in \Rbb^{N_e}$, find the ROM coefficients $\hat \ubm_k^\star(\psibold) \in \Rbb^k$ such that 
\begin{equation}
  \hat \rbm_k(\hat \ubm_k^\star(\psibold); \rhobold^\star(\psibold)) = 0 \quad \text{in } \Rbb^k,
  \label{eq:rom_soln}
\end{equation}
where the ROM residual $\hat \rbm_k: \Rbb^k \times N_e \to \Rbb^k$ is given by
\begin{equation*}
  \hat \rbm_k(\hat \wbm; \rhobold)
  \coloneqq \Phibold_k^T \rbm(\Phibold_k \hat \wbm; \rhobold)
  = \hat \Kbm_k(\rhobold) \hat \wbm - \hat \fbm
  \quad \forall \hat \wbm \in \Rbb^k, \ \forall \rhobold \in \Rbb^{N_e}, 
\end{equation*}
and the ROM stiffness matrix $\hat \Kbm_k: \Rbb^{N_e} \to \Rbb^{k \times k}$ and load vector $\hat \fbm_k \in \Rbb^{k}$ are given by
\begin{align}
  \hat \Kbm_k(\rhobold)
  &\coloneqq \Phibold_k^T \Kbm(\rhobold) \Phibold_k
  = \sum_{e=1}^{N_e} \alpha(\rho_e) (\Pbold_e^T \Phibold_k)^T \Kbm_e (\Pbold_e^T \Phibold_k) \quad \forall \rhobold \in \Rbb^{N_e}\label{eq:hatkbm} \\
  \hat \fbm_k
  &\coloneqq \Phibold_k^T \fbm
  = \sum_{e=1}^{N_e} (\Pbold_e^T \Phibold_k)^T \fbm_e.
  \notag
\end{align}
The solution to~\eqref{eq:rom_soln} satisfies a linear system: given $\psibold$, find $\hat \ubm_k^\star(\psibold)$ such that
\begin{equation*}
  \hat \Kbm_k(\rhobold^\star(\psibold)) \hat \ubm_k^\star(\psibold)
  =
  \hat \fbm_k \quad \text{in } \Rbb^k.
\end{equation*}
Finally, the ROM objective function $J_k: \Psi \to \Rbb$ is given by
\begin{equation}
  \label{eq:Jrom}
  J_k(\psibold) \coloneqq j(\ubm_k^\star(\psibold), \rhobold^\star(\psibold)).
\end{equation}
We make a few remarks:
\begin{remark}
  \label{rem:rom_wellposed}
  Assuming the columns of $\Phibold_k$ are linearly independent, the ROM stiffness matrix $\hat \Kbm_k(\rhobold)$ is symmetric positive definite and the ROM problem is well-posed.
\end{remark}
\begin{remark}
  \label{rem:rom_opt}
  The Galerkin approximation is optimal in the energy norm in the sense that, for any given $\psibold \in \Rbb^{N_e}$, 
  \begin{equation*}
    \| \ubm^\star(\psibold) - \ubm_k^\star(\psibold) \|_{\Kbm(\rhobold^\star(\psibold))}
    = \min_{\wbm_k \in \mathrm{Img}(\Phibold_k)} \| \ubm^\star(\psibold) - \wbm_k \|_{\Kbm(\rhobold^\star(\psibold))},
  \end{equation*}
  where the energy norm, which depends on $\rhobold \in \Rbb^{N_e}$, is defined by $\| \wbm \|_{\Kbm(\rhobold)} \coloneqq \sqrt{\wbm^T \Kbm(\rhobold) \wbm}$ $\forall \wbm \in \Rbb^{N_\dstvcL}$, $\forall \rhobold \in \Rbb^{N_e}$.
\end{remark}
\begin{remark}
  \label{rem:rom_refine}
  Suppose we are given a basis matrix $\Phibold_k \in \Rbb^{N_\dstvcL \times k}$, and we construct a new basis matrix $\Phibold_{k+1} = [\Phibold_k, \ybm] \in \Rbb^{N_\dstvcL \times (k+1)}$ by augmenting $\Phibold_k$ by a vector $\ybm \in \Rbb^{N_\dstvcL}$. If $\ubm_k^\star(\psibold)$ and $\ubm_{k+1}^\star(\psibold)$ are the ROM approximations associated with $\Phibold_k$ and $\Phibold_{k+1}$ respectively, then
  \begin{equation*}
    \| \ubm^\star(\psibold) - \ubm_{k+1}^\star(\psibold) \|_{\Kbm(\rhobold^\star(\psibold))} \leq \| \ubm^\star(\psibold) - \ubm_k^\star(\psibold) \|_{\Kbm(\rhobold^\star(\psibold))};
  \end{equation*}
  the energy norm of the error is a non-increasing function under basis enrichment.
\end{remark}
\begin{remark}
  \label{rem:rom_exact}
  Suppose $\Phibold_k$ is chosen such that $\ubm^\star(\psibold) \in \mathrm{Img}(\Phibold_k)$.  Then the ROM reproduces the solution $\ubm^\star(\psibold)$; i.e., $\ubm_k^\star(\psibold) = \ubm^\star(\psibold)$.
\end{remark}
\begin{remark}
  \label{rem:rom_assemble}
  The expression $\hat \Kbm_k(\rhobold) = \sum_{e=1}^{N_e} \alpha(\rho_e) (\Pbold_e^T \Phibold_k)^T \Kbm_e (\Pbold_e^T \Phibold_k)$ emphasizes that the ROM stiffness matrix $\hat \Kbm_k(\rhobold)$ can be assembled using only dense operations.  The elemental reduced basis $\Pbold_e^T \Phibold_k \in \Rbb^{N_\dstvcL^e \times k}$ comprises the rows of $\Phibold_k$ associated with the element $e$; we then perform the dense operation $(\Pbold_e^T \Phibold_k)^T \Kbm_e (\Pbold_e^T \Phibold_k)$ for each element.  The use of dense operations enables an efficient assembly of the ROM residual using BLAS operations.  The same remark applies to the construction of $\hat \fbm_k$.
\end{remark}

\subsection{Galerkin reduced-order model for the adjoint solution}
\label{sec:rom:adj}
We now discuss the construction of the ROM for the approximation of the adjoint.  We again assume that the reduced basis (matrix) $\Phibold_k \in \Rbb^{N_\dstvcL \times k}$ for $k \ll N_\dstvcL$ is given.  As before, given $\psibold \in \Rbb^{N_e}$, we construct an approximation of the form $\lambdabold_k^\star(\psibold) = \Phibold_k \hat \lambdabold_k^\star(\psibold) \in \Rbb^{N_\dstvcL}$ to the solution $\lambdabold^\star(\psibold) \in \Rbb^{N_\dstvcL}$ using Galerkin projection.  The coefficients $\hat \lambdabold_k^\star(\psibold) \in \Rbb^k$ are given by the following adjoint problem: given $\psibold \in \Rbb^{N_e}$, find the ROM coefficients $\hat \lambdabold_k^\star(\psibold) \in \Rbb^k$ such that
\begin{equation*}
  \hat \Kbm_k(\rhobold^\star(\psibold)) \hat \lambdabold_k^\star(\psibold) = \hat \gbm_k(\ubm_k^\star(\psibold),\rhobold^\star(\psibold)) \quad \text{in } \Rbb^k,
\end{equation*}
where the ROM stiffness matrix $\hat \Kbm_k: \Rbb^{N_e} \to \Rbb^{k \times k}$ is given by \eqref{eq:hatkbm} and the adjoint load vector
$\func{\hat\gbm_k}{\Rbb^k\times\Rbb^{N_e}}{\Rbb^k}$ is given by
\begin{equation*}
  \hat \gbm_k(\wbm_k,\rhobold)
  \coloneqq \Phibold_k^T \pder{j}{\ubm}(\wbm_k,\rhobold)^T
  = \sum_{e=1}^{N_e} (\Pbold_e^T \Phibold_k)^T \pder{j}{\ubm_e}(\Pbold_e^T \wbm_k,\rho_e)^T \quad \forall \wbm_k \in \Rbb^k, \ \forall \rhobold \in \Rbb^{N_e}.
\end{equation*}
For the compliance output, the right hand side simplifies to $\hat \gbm_k(\ubm_k^\star(\psibold),\rhobold^\star(\psibold)) = \hat \fbm_k$ and hence $\lambdabold_k^\star(\psibold) = \ubm_k^\star(\psibold)$.
\begin{remark}
  \label{rem:rom_adjoint}
  Remarks~\ref{rem:rom_opt}-\ref{rem:rom_assemble} also hold for the adjoint system.  Namely, the Galerkin approximation is optimal in the energy norm in the sense that, for any given $\psibold \in \Rbb^{N_e}$, 
  \begin{equation*}
    \| \lambdabold^\star(\psibold) - \lambdabold_k^\star(\psibold) \|_{\Kbm(\rhobold^\star(\psibold))} = \min_{\vbm_k \in \mathrm{Img}(\Phibold_k)} \| \lambdabold^\star(\psibold) - \vbm_k \|_{\Kbm(\rhobold^\star(\psibold))},
  \end{equation*}
  which is the counterpart of Remark~\ref{rem:rom_opt}.
  The optimality implies that the energy norm of the error in the adjoint is a non-increasing function under basis enrichment (Remark~\ref{rem:rom_refine}).  In addition, if $\Phibold_k$ is chosen such that $\ubm^\star(\psibold) \in \mathrm{Img}(\Phibold_k)$ \emph{and} $\lambdabold^\star(\psibold) \in \mathrm{Img}(\Phibold_k)$, then the ROM reproduces the adjoint $\lambdabold^\star(\psibold)$; i.e., $\lambdabold_k^\star(\psibold) = \lambdabold^\star(\psibold)$ (Remark~\ref{rem:rom_exact}).  Note that we require both $\ubm^\star(\psibold)$ and $\lambdabold^\star(\psibold)$ to be in $\mathrm{Img}(\Phibold_k)$, because the adjoint equation itself depends on $\ubm_k^\star(\psibold)$ and the adjoint would not be exact if the primal solution $\ubm_k^\star(\psibold)$ is not exact.  Finally, the adjoint system can be efficiently assembled using dense linear algebra operations (Remark~\ref{rem:rom_assemble}).
\end{remark}

\subsection{Error estimation}
\label{sec:rom:errest}
We now introduce error estimates for our ROM approximation of the solution, adjoint, objective function, and objective function gradient.  To begin, we introduce the key ingredients of our (\textit{a posteriori}) error estimates.  The (primal) residual, which has been defined earlier, is given by
\begin{equation*}
  \rbm(\wbm; \rhobold^\star(\psibold))
  \coloneqq
  \Kbm(\rhobold^\star(\psibold)) \wbm - \fbm,
  \quad \forall \wbm \in \Rbb^{N_\dstvcL}, \ \forall \psibold \in \Rbb^{N_e}.
\end{equation*}
The adjoint residual is given by
\begin{equation*}
  \rbm^{\mathrm{adj}}(\vbm; \rhobold^\star(\psibold))
  \coloneqq
  \Kbm(\rhobold^\star(\psibold)) \vbm - \pder{j}{\ubm}(\ubm^\star_k(\psibold) ; \rhobold^\star(\psibold)  )
  \quad \forall \vbm \in \Rbb^{N_\dstvcL}, \  \forall \psibold \in \Rbb^{N_e};
\end{equation*}
the adjoint residual depends on both the adjoint approximation $\vbm$ and the linearization point of the ROM adjoint problem $\ubm^\star_k(\psibold)$.  In addition, given any arbitrary matrix $\Abm \in \Rbb^{m \times n}$, we denote its maximum and minimum singular values by $\sigma_{\mathrm{max}}(\Abm)$ and $\sigma_{\mathrm{min}}(\Abm)$, respectively.  Finally, given a symmetric positive definite matrix $\Abm \in \Rbb^{m \times m}$, the symmetric positive definite matrix $\Abm^{1/2}$ is the matrix that satisfies $\Abm = \Abm^{1/2} \Abm^{1/2}$.

We now state a series of theorems that relates various errors to the residuals; the associated proofs are provided in~\ref{sec:rom_proofs}.
\begin{theorem}
  \label{thm:rom_primal_error}
  For any given $\psibold \in \Rbb^{N_e}$, the energy norm of the error in the solution is bounded by
\begin{equation*}
  \| \ubm^\star(\psibold) - \ubm_k^\star(\psibold) \|_{\Kbm(\rhobold^\star(\psibold))}
  \leq \sigma_{\mathrm{min}}(\Kbm(\rhobold^\star(\psibold)))^{-1/2} \| \rbm(\ubm_k^\star(\psibold); \rhobold^\star(\psibold)) \|_2.
\end{equation*}
\end{theorem}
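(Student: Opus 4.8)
The plan is to convert the residual into the error via the linear structure of the elasticity system, read off the energy norm of the error as a $\Kbm^{-1}$-weighted norm of the residual, and then bound that weighted norm spectrally.

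First I would fix $\psibold$ and abbreviate $\Kbm \coloneqq \Kbm(\rhobold^\star(\psibold))$, $\ebm \coloneqq \ubm^\star(\psibold) - \ubm_k^\star(\psibold)$, and $\rbm \coloneqq \rbm(\ubm_k^\star(\psibold); \rhobold^\star(\psibold))$. Since the true solution satisfies $\Kbm \ubm^\star(\psibold) = \fbm$ by \eqref{eqn:linelastsoln}--\eqref{eqn:linelastU}, the residual evaluated at the ROM solution telescopes:
\[
  \rbm
  = \Kbm \ubm_k^\star(\psibold) - \fbm
  = \Kbm\bigl(\ubm_k^\star(\psibold) - \ubm^\star(\psibold)\bigr)
  = -\Kbm \ebm .
\]
Because $\Kbm$ is symmetric positive definite (the linear elasticity stiffness matrix is SPD, as noted earlier), it is invertible, and I can solve this relation for the error as $\ebm = -\Kbm^{-1}\rbm$.

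Second, I would substitute this expression into the squared energy norm. Using the definition $\norm{\ebm}_{\Kbm}^2 = \ebm^T \Kbm \ebm$ together with $\ebm = -\Kbm^{-1}\rbm$, the factors of $\Kbm$ and $\Kbm^{-1}$ collapse and yield the exact identity
\[
  \norm{\ebm}_{\Kbm}^2
  = (\Kbm^{-1}\rbm)^T \Kbm (\Kbm^{-1}\rbm)
  = \rbm^T \Kbm^{-1} \rbm .
\]
This is the crux of the argument: the energy norm of the error equals the dual ($\Kbm^{-1}$) norm of the residual, so it only remains to bound $\rbm^T \Kbm^{-1} \rbm$ by an ordinary Euclidean norm.

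Finally, I would bound the right-hand side spectrally. Since $\Kbm$ is symmetric positive definite, its eigenvalues coincide with its singular values, so the largest eigenvalue of $\Kbm^{-1}$ equals $\sigma_{\mathrm{min}}(\Kbm)^{-1}$. The Rayleigh-quotient bound then gives $\rbm^T \Kbm^{-1} \rbm \leq \sigma_{\mathrm{min}}(\Kbm)^{-1}\,\norm{\rbm}_2^2$, and taking square roots produces the claimed estimate. I do not anticipate a genuine obstacle: this is the standard residual-based \emph{a posteriori} bound for a symmetric positive definite system, and the only points requiring care are invoking positive definiteness of $\Kbm$ to justify the inversion in the first step and the identification of singular values with eigenvalues in the last.
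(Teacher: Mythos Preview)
Your proof is correct and follows essentially the same approach as the paper: both obtain $\Kbm\ebm = -\rbm$, rewrite the energy norm of the error as a $\Kbm^{-1}$-weighted norm of the residual (the paper phrases this via $\Kbm^{1/2}\ebm = -\Kbm^{-1/2}\rbm$, you via $\ebm^T\Kbm\ebm = \rbm^T\Kbm^{-1}\rbm$), and then apply the spectral bound $\sigma_{\max}(\Kbm^{-1}) = \sigma_{\min}(\Kbm)^{-1}$.
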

\begin{theorem}
  \label{thm:rom_adjoint_error}
  For any given $\psibold \in \Rbb^{N_e}$, the energy norm of the error in the adjoint is bounded by
  \begin{equation*}
    \| \lambdabold^\star(\psibold) - \lambdabold_k^\star(\psibold) \|_{\Kbm(\rhobold^\star(\psibold))}
    \leq \sigma_{\mathrm{min}}(\Kbm(\rhobold^\star(\psibold)))^{-1/2}  \| \rbm^{\mathrm{adj}}(\lambdabold_k^\star(\psibold); \rhobold^\star(\psibold)) \|_2
    + \sigma_{\mathrm{max}}(\Abm)\| \rbm(\ubm_k^\star(\psibold); \rhobold^\star(\psibold)) \|_2,
  \end{equation*}
  where 
  \begin{equation}
    \Abm
    \coloneqq
    \Kbm^{-1/2}  \left[ \int_{\theta = 0}^1 \pder{^2j}{\ubm^2}(\theta \ubm^\star + (1- \theta) \ubm^\star_k) d\theta \right] \Kbm^{-1},
    \label{eq:rom_Abm}
  \end{equation}
  with all variables and forms evaluated about $\psibold$ and $\rhobold^\star(\psibold)$.
\end{theorem}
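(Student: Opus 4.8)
The plan is to decompose the adjoint error into two contributions---one arising from the ROM (Galerkin) approximation of the adjoint equation and one arising from the mismatch in the linearization point of the objective gradient---and then bound each using the residual-based machinery already established for the primal problem. To isolate these two effects I would introduce the auxiliary vector $\tilde\lambdabold \in \Rbb^{N_\dstvcL}$ defined as the solution of the \emph{high-dimensional} adjoint equation whose right-hand side is linearized about the ROM primal state $\ubm_k^\star(\psibold)$, i.e. $\Kbm(\rhobold^\star(\psibold))\,\tilde\lambdabold = \pder{j}{\ubm}(\ubm_k^\star(\psibold);\rhobold^\star(\psibold))^T$. By construction $\tilde\lambdabold$ is the exact HDM counterpart of the ROM adjoint $\lambdabold_k^\star$ (both carry the \emph{same} right-hand side), and it annihilates the adjoint residual, $\rbm^{\mathrm{adj}}(\tilde\lambdabold;\rhobold^\star(\psibold)) = \zerobold$. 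The triangle inequality in the (symmetric positive definite) energy norm then gives $\| \lambdabold^\star - \lambdabold_k^\star \|_{\Kbm} \leq \| \lambdabold^\star - \tilde\lambdabold \|_{\Kbm} + \| \tilde\lambdabold - \lambdabold_k^\star \|_{\Kbm}$, and I would bound the two terms separately.

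For the second term I would observe that $\lambdabold_k^\star$ is precisely the Galerkin ROM approximation of $\tilde\lambdabold$ over $\mathrm{Img}(\Phibold_k)$, since both solve systems with stiffness $\Kbm$ and the identical right-hand side $\pder{j}{\ubm}(\ubm_k^\star;\rhobold^\star(\psibold))^T$ (cf.\ Remark~\ref{rem:rom_adjoint}). The adjoint system is therefore structurally identical to the primal system, so the derivation underlying Theorem~\ref{thm:rom_primal_error} applies verbatim with the primal residual replaced by the adjoint residual, yielding $\| \tilde\lambdabold - \lambdabold_k^\star \|_{\Kbm} \leq \sigma_{\mathrm{min}}(\Kbm(\rhobold^\star(\psibold)))^{-1/2}\,\|\rbm^{\mathrm{adj}}(\lambdabold_k^\star;\rhobold^\star(\psibold))\|_2$. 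This recovers the first term of the claimed bound.

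For the first term I would quantify the cost of evaluating $\pder{j}{\ubm}$ at $\ubm^\star$ rather than $\ubm_k^\star$. Subtracting the two exact adjoint equations (using $\Kbm^T = \Kbm$) gives $\Kbm(\lambdabold^\star - \tilde\lambdabold) = \pder{j}{\ubm}(\ubm^\star)^T - \pder{j}{\ubm}(\ubm_k^\star)^T$, and the fundamental theorem of calculus applied to $\theta \mapsto \pder{j}{\ubm}(\theta\ubm^\star + (1-\theta)\ubm_k^\star)^T$ rewrites the right-hand side as $\left[\int_0^1 \pder{^2j}{\ubm^2}(\theta\ubm^\star + (1-\theta)\ubm_k^\star)\,d\theta\right](\ubm^\star - \ubm_k^\star)$, which is legitimate because $j$ is assumed twice continuously differentiable. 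Inserting the primal identity $\ubm^\star - \ubm_k^\star = -\Kbm^{-1}\rbm(\ubm_k^\star;\rhobold^\star(\psibold))$ (the relation behind Theorem~\ref{thm:rom_primal_error}) and left-multiplying by $\Kbm^{1/2}$ yields the clean identity $\Kbm^{1/2}(\lambdabold^\star - \tilde\lambdabold) = -\Abm\,\rbm(\ubm_k^\star;\rhobold^\star(\psibold))$ with $\Abm$ exactly as in~\eqref{eq:rom_Abm}. Taking the Euclidean norm and using $\|\Abm\vbm\|_2 \leq \sigma_{\mathrm{max}}(\Abm)\|\vbm\|_2$ gives $\|\lambdabold^\star - \tilde\lambdabold\|_{\Kbm} = \|\Kbm^{1/2}(\lambdabold^\star - \tilde\lambdabold)\|_2 \leq \sigma_{\mathrm{max}}(\Abm)\,\|\rbm(\ubm_k^\star;\rhobold^\star(\psibold))\|_2$, the second term of the claim; combining the two estimates completes the proof.

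I expect the principal difficulty to be bookkeeping rather than analysis: one must track carefully \emph{which} primal state linearizes each gradient evaluation, so that the intermediate $\tilde\lambdabold$ cleanly separates the ROM projection error (controlled by the adjoint residual) from the linearization-point error (controlled by the primal residual through $\Abm$). The only genuinely new analytic ingredient beyond Theorem~\ref{thm:rom_primal_error} is the vector-valued mean value / fundamental theorem of calculus step, justified by the smoothness of $j$; the remaining manipulations with $\Kbm^{1/2}$, $\Kbm^{-1/2}$, and $\Kbm^{-1}$ are routine given that $\Kbm(\rhobold^\star(\psibold))$ is symmetric positive definite, so that these roots and the energy norm are well defined.
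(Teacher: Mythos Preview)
Your proposal is correct and essentially matches the paper's proof: the paper derives the single identity $\Kbm(\lambdabold^\star-\lambdabold_k^\star)=\overline{\pder{^2j}{\ubm^2}}(\ubm^\star,\ubm_k^\star)(\ubm^\star-\ubm_k^\star)-\rbm^{\mathrm{adj}}(\lambdabold_k^\star)$ directly, left-multiplies by $\Kbm^{-1/2}$, substitutes $\Kbm(\ubm^\star-\ubm_k^\star)=-\rbm(\ubm_k^\star)$, and then applies the triangle inequality in $\|\cdot\|_2$. Your introduction of the auxiliary $\tilde\lambdabold$ is just a named version of the same splitting (indeed $-\Abm\,\rbm(\ubm_k^\star)=\Kbm^{1/2}(\lambdabold^\star-\tilde\lambdabold)$ and $-\Kbm^{-1/2}\rbm^{\mathrm{adj}}(\lambdabold_k^\star)=\Kbm^{1/2}(\tilde\lambdabold-\lambdabold_k^\star)$), so the two arguments are algebraically identical.
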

\begin{theorem}
  \label{thm:rom_output_error}
  For any given $\psibold \in \Rbb^{N_e}$, the error in the objective function is bounded by
\begin{align*}
  | J(\psibold) - J_k(\psibold) |
  &\leq
  \sigma_{\rm min}(\Kbm(\rhobold^\star(\psibold)))^{-1} \| \rbm(\ubm_k^\star(\psibold); \rhobold^\star(\psibold)) \|_2\| \rbm^{\mathrm{adj}}(\lambdabold_k^\star(\psibold); \rhobold^\star(\psibold)) \|_2
  \\
  &\quad + \sigma_{\rm max}(\Bbm) \| \rbm(\ubm_k^\star(\psibold); \rhobold^\star(\psibold)) \|_2^2,
\end{align*}
where 
\begin{equation}
    \Bbm \coloneqq \Kbm^{-1} \left[ \int_{\theta = 0}^1 \pder{^2j}{\ubm^2}(\theta \ubm^\star + (1- \theta) \ubm^\star_k) \theta d\theta \right] \Kbm^{-1}
    \label{eq:rom_Bbm}
\end{equation}
  with all variables and forms evaluated about $\psibold$ and $\rhobold^\star(\psibold)$.
\end{theorem}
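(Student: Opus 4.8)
The plan is to reduce the output error to the same residual quantities that appear in Theorems~\ref{thm:rom_primal_error} and~\ref{thm:rom_adjoint_error} by combining a second-order Taylor expansion of $j$ in its state argument with the adjoint and Galerkin structure of the ROM. First I observe that $J(\psibold)$ and $J_k(\psibold)$ share the \emph{same} filtered density $\rhobold^\star(\psibold)$ (the Helmholtz filter is never reduced), so $J(\psibold)-J_k(\psibold)=j(\ubm^\star(\psibold),\rhobold^\star(\psibold))-j(\ubm_k^\star(\psibold),\rhobold^\star(\psibold))$ depends only on the primal state. Writing $\ebm\coloneqq\ubm^\star(\psibold)-\ubm_k^\star(\psibold)$ and abbreviating $\rbm\coloneqq\rbm(\ubm_k^\star(\psibold);\rhobold^\star(\psibold))$ and $\rbm^{\mathrm{adj}}\coloneqq\rbm^{\mathrm{adj}}(\lambdabold_k^\star(\psibold);\rhobold^\star(\psibold))$, the identity $\Kbm(\rhobold^\star(\psibold))\ubm^\star(\psibold)=\fbm$ gives the primal error--residual relation $\Kbm\ebm=-\rbm$, i.e.\ $\ebm=-\Kbm^{-1}\rbm$. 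I would then Taylor-expand $j$ in its first argument about the \emph{ROM} state $\ubm_k^\star(\psibold)$ along the segment $\theta\mapsto\theta\ubm^\star(\psibold)+(1-\theta)\ubm_k^\star(\psibold)$, using the exact integral form of the remainder, to obtain
\begin{equation*}
  J(\psibold)-J_k(\psibold)=\pder{j}{\ubm}(\ubm_k^\star(\psibold),\rhobold^\star(\psibold))\,\ebm+\int_0^1 w(\theta)\,\ebm^T\pder{^2 j}{\ubm^2}\!\left(\theta\ubm^\star(\psibold)+(1-\theta)\ubm_k^\star(\psibold)\right)\ebm\,d\theta,
\end{equation*}
where $w(\theta)$ is the weight produced by the integral form of the Taylor remainder.

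Second, I would dispatch the linear term using the adjoint residual. By its definition, $\pder{j}{\ubm}(\ubm_k^\star(\psibold),\rhobold^\star(\psibold))^T=\Kbm\lambdabold_k^\star(\psibold)-\rbm^{\mathrm{adj}}$, so that $\pder{j}{\ubm}(\ubm_k^\star(\psibold),\rhobold^\star(\psibold))\,\ebm=\lambdabold_k^\star(\psibold)^T\Kbm\ebm-(\rbm^{\mathrm{adj}})^T\ebm$. The first contribution equals $-\lambdabold_k^\star(\psibold)^T\rbm$, which vanishes by Galerkin orthogonality: since $\lambdabold_k^\star(\psibold)\in\mathrm{Img}(\Phibold_k)$ and the ROM state satisfies $\Phibold_k^T\rbm=\zerobold$ by the residual statement~\eqref{eq:rom_soln}, we get $\lambdabold_k^\star(\psibold)^T\rbm=0$. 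Inserting $\ebm=-\Kbm^{-1}\rbm$ into the surviving term produces the cross term $(\rbm^{\mathrm{adj}})^T\Kbm^{-1}\rbm$, while inserting the same relation into the quadratic remainder turns it into a quadratic form in the primal residual of the shape $\rbm^T\Bbm\rbm$, with $\Bbm$ the $\Kbm^{-1}$-scaled integrated Hessian in~\eqref{eq:rom_Bbm}.

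Finally I would bound the two pieces separately. For the cross term I would recognize it as the $\Kbm^{-1}$-inner product $(\Kbm^{-1/2}\rbm^{\mathrm{adj}})^T(\Kbm^{-1/2}\rbm)$ and apply Cauchy--Schwarz together with $\|\Kbm^{-1/2}\vbm\|_2\le\sigma_{\mathrm{min}}(\Kbm)^{-1/2}\|\vbm\|_2$, giving $\sigma_{\mathrm{min}}(\Kbm)^{-1}\|\rbm\|_2\|\rbm^{\mathrm{adj}}\|_2$; for the quadratic term, the symmetry of $\Bbm$ yields $|\rbm^T\Bbm\rbm|\le\sigma_{\mathrm{max}}(\Bbm)\|\rbm\|_2^2$. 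The triangle inequality then assembles the stated bound. The main obstacle is not any individual estimate but the bookkeeping that links the three ingredients: one must linearize at the ROM state $\ubm_k^\star(\psibold)$ so that the linear term matches the adjoint residual exactly, then invoke Galerkin orthogonality to annihilate the $\lambdabold_k^\star(\psibold)^T\rbm$ contribution (so the full-order adjoint never enters), and finally track the sign and the remainder weight $w(\theta)$ carefully so that the quadratic contribution reproduces precisely the matrix $\Bbm$ of~\eqref{eq:rom_Bbm} rather than a comparable but distinct integrated-Hessian matrix.
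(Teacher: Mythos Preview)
Your approach is correct and, in several respects, cleaner than the paper's, but it takes a genuinely different route. The paper expands $j$ about the \emph{full} state $\ubm^\star$ (integration-by-parts with antiderivative $v=\theta$), so the linear term is $\pder{j}{\ubm}(\ubm^\star)\,\ebm=(\Kbm\lambdabold^\star)^T\ebm$; Galerkin orthogonality then replaces $\lambdabold^\star$ by $\lambdabold^\star-\lambdabold_k^\star$ before the residual substitutions. You instead expand about the \emph{ROM} state $\ubm_k^\star$ (antiderivative $v=\theta-1$), which has the advantage that the adjoint residual appears immediately through $\pder{j}{\ubm}(\ubm_k^\star)^T=\Kbm\lambdabold_k^\star-\rbm^{\mathrm{adj}}$ and the full-order adjoint $\lambdabold^\star$ never enters; the Galerkin-orthogonality step is then used only to kill $\lambdabold_k^{\star\,T}\rbm$. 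Both arguments are valid.

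One consequence you should be explicit about: your choice forces the Taylor remainder weight to be $w(\theta)=1-\theta$, not $\theta$. The quadratic term you obtain is therefore $\rbm^T\Bbm'\rbm$ with
\[
\Bbm'=\Kbm^{-1}\!\left[\int_0^1 \pder{^2 j}{\ubm^2}\!\bigl(\theta\ubm^\star+(1-\theta)\ubm_k^\star\bigr)\,(1-\theta)\,d\theta\right]\!\Kbm^{-1},
\]
which is the mirror image of the $\Bbm$ in~\eqref{eq:rom_Bbm} (weight $\theta$ versus $1-\theta$, equivalently the roles of $\ubm^\star$ and $\ubm_k^\star$ swapped under the change of variable $\theta\mapsto 1-\theta$). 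There is no way to recover the stated weight $\theta$ while linearizing at $\ubm_k^\star$; that specific $\Bbm$ arises only from the paper's expansion point $\ubm^\star$. Your bound is thus a legitimate variant of Theorem~\ref{thm:rom_output_error} with $\Bbm'$ in place of $\Bbm$; if you insist on matching~\eqref{eq:rom_Bbm} exactly, you must follow the paper's expansion at $\ubm^\star$.
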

\begin{theorem}
  \label{thm:rom_sensitivity_error}
  For any given $\psibold \in \Rbb^{N_e}$, the error in the objective function gradient is bounded by
  \begin{align*}
    \| \nabla J(\psibold) - \nabla J_k(\psibold) \|_2
    \leq \sigma_{\mathrm{max}}(\Cbm) \| \rbm(\ubm_k^\star(\psibold); \rhobold^\star(\psibold)) \|_2
    + \sigma_{\mathrm{max}}(\Dbm) \| \rbm^{\mathrm{adj}}(\lambdabold_k^\star(\psibold); \rhobold^\star(\psibold)) \|_2,
  \end{align*}
  where the entries of matrices $\Cbm \in \Rbb^{N_e \times N_\dstvcL}$ and $\Dbm \in \Rbb^{N_e \times N_\dstvcL}$ are given by
  \begin{align}
    \Cbm_{pm}
    &=
    - \pder{\rhobold^\star_q}{\psibold_p}
    \left[
    \int_{\theta=0}^1 \pder{^2j}{\rhobold_q \partial \ubm_s}(\theta \ubm^\star + (1-\theta)\ubm^\star_k) d\theta
    - \lambdabold^\star_{k,i} \int_{\theta=0}^1 \pder{^2\rbm_i}{\rhobold_q \partial \ubm_s}(\theta \ubm^\star + (1-\theta)\ubm^\star_k) d\theta \right.
    \\
    &\qquad
    \left.
    - \pder{\rbm_i}{\rhobold_q}(\ubm^\star) (\Kbm^{-1})_{il}
    \int_{\theta=0}^1 \pder{^2j}{\ubm_l \partial \ubm_s}(\theta \ubm^\star + (1-\theta)\ubm^\star_k) d\theta
    \right]
    \Kbm^{-1}_{sm} 
    , \quad p = 1,\dots,N_e, \ m = 1,\dots,N_\dstvcL, \label{eq:rom_Cbm}
    \\
    \Dbm_{pm}
    &=
    \pder{\rhobold^\star_q}{\psibold_p}
    \pder{\rbm_i}{\rhobold_q}(\ubm^\star) (\Kbm^{-1})_{im} ,
    \quad p = 1,\dots,N_e, \ m = 1,\dots,N_\dstvcL, 
    \label{eq:rom_Dbm}
  \end{align}
  where all variables and forms evaluated about $\psibold$ and $\rhobold^\star(\psibold)$, and the summation on repeated indices is implied.
\end{theorem}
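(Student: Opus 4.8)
The plan is to derive an \emph{exact} decomposition of the gradient error as $\nabla J-\nabla J_k=\Cbm\,\rbm(\ubm_k^\star;\rhobold^\star(\psibold))+\Dbm\,\rbm^{\mathrm{adj}}(\lambdabold_k^\star;\rhobold^\star(\psibold))$, after which the claimed inequality follows in one line from $\|\Abm\vbm\|_2\le\sigma_{\mathrm{max}}(\Abm)\|\vbm\|_2$ and the triangle inequality. Here $\nabla J_k$ denotes the gradient obtained from the adjoint formula~\eqref{eq:pap_sensitivity} with the ROM primal and adjoint $(\ubm_k^\star,\lambdabold_k^\star)$ in place of the HDM pair $(\ubm^\star,\lambdabold^\star)$. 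Since the outer filter factor $\pder{\rhobold^\star}{\psi_p}^T$ is identical for both gradients, it does not contribute to the error; writing the difference componentwise as $\pder{J}{\psi_p}-\pder{J_k}{\psi_p}=\pder{\rhobold^\star_q}{\psi_p}\,\Delta_q$, I would analyze the inner difference $\Delta_q=(A)_q-(B)_q$, where $(A)_q$ is the change in $\pder{j}{\rho_q}$ and $(B)_q$ is the change in $\pder{\rbm_i}{\rho_q}\lambda_i$ between the HDM and ROM evaluations.

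Two ingredients drive the expansion, and they are the same ones underlying Theorems~\ref{thm:rom_primal_error} and~\ref{thm:rom_adjoint_error}. First, because the elasticity operator is linear, subtracting $\Kbm\ubm^\star=\fbm$ from the residual definition gives the primal identity $\ubm^\star-\ubm_k^\star=-\Kbm^{-1}\rbm(\ubm_k^\star)$, and an analogous manipulation of the adjoint equations gives $\lambdabold^\star-\lambdabold_k^\star=-\Kbm^{-1}\rbm^{\mathrm{adj}}(\lambdabold_k^\star)-\Kbm^{-1}\bigl[\pder{j}{\ubm}(\ubm_k^\star)^T-\pder{j}{\ubm}(\ubm^\star)^T\bigr]$; the second bracket is the primal-error contribution that the adjoint inherits because its right-hand side is linearized at the state. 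Second, every difference of a first derivative evaluated at $\ubm^\star$ versus $\ubm_k^\star$ is replaced by its integral (mean-value) remainder, e.g.\ $\pder{j}{\rho_q}(\ubm^\star)-\pder{j}{\rho_q}(\ubm_k^\star)=\bigl[\int_0^1\pder{^2 j}{\rho_q\partial\ubm_s}(\theta\ubm^\star+(1-\theta)\ubm_k^\star)\,d\theta\bigr](\ubm^\star-\ubm_k^\star)_s$, and likewise for $\pder{\rbm_i}{\rho_q}$ and for the bracketed term above. Substituting the primal identity then turns every primal-error factor $(\ubm^\star-\ubm_k^\star)_s$ into $-(\Kbm^{-1})_{sm}\rbm_m(\ubm_k^\star)$.

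Collecting terms is the heart of the argument. Term $(A)_q$ contributes the first summand of $\Cbm_{pm}$. For $(B)_q$ I would add and subtract $\pder{\rbm_i}{\rho_q}(\ubm^\star)\lambdabold^\star_{k,i}$, splitting it into a piece $(B_1)$ — the state-induced change of $\pder{\rbm_i}{\rho_q}$ contracted against $\lambdabold_k^\star$ — which yields the second summand of $\Cbm_{pm}$ (here the integrand $\pder{^2\rbm_i}{\rho_q\partial\ubm_s}$ is simply $\pder{\Kbm_{is}}{\rho_q}$, independent of $\theta$), and a piece $(B_2)=\pder{\rbm_i}{\rho_q}(\ubm^\star)(\lambdabold^\star-\lambdabold_k^\star)_i$. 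Inserting the adjoint identity into $(B_2)$ splits it once more: the $-\Kbm^{-1}\rbm^{\mathrm{adj}}$ part reproduces $\Dbm_{pm}=\pder{\rhobold^\star_q}{\psi_p}\pder{\rbm_i}{\rho_q}(\ubm^\star)(\Kbm^{-1})_{im}$, while the remaining primal-induced part $-\Kbm^{-1}\bigl[\pder{j}{\ubm}(\ubm_k^\star)^T-\pder{j}{\ubm}(\ubm^\star)^T\bigr]$, after the integral remainder and the primal identity, yields the third summand of $\Cbm_{pm}$ with its nested factors $(\Kbm^{-1})_{il}\cdots(\Kbm^{-1})_{sm}$. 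Reattaching $\pder{\rhobold^\star_q}{\psi_p}$ and reconciling the signs delivers the exact identity stated above.

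Taking the Euclidean norm of the identity and bounding each matrix--vector product by its largest singular value then gives the theorem. The main obstacle is not any individual estimate but the bookkeeping across the two nested substitutions: the many summed indices, the row/column (transpose) convention, and above all the signs. The genuinely delicate point is the adjoint-error expansion — because the adjoint right-hand side $\pder{j}{\ubm}$ is evaluated at the primal state, the adjoint error carries a primal-error term that, routed through $\pder{\rbm_i}{\rho_q}(\ubm^\star)$ and a further $\Kbm^{-1}$, is exactly what manufactures the third summand of $\Cbm$; getting the ordering of the two $\Kbm^{-1}$ factors and the placement of the Hessian integral $\int_0^1\pder{^2 j}{\ubm^2}\,d\theta$ correct there is where care is required.
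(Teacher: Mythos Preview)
Your proposal is correct and follows essentially the same route as the paper: the paper likewise writes $(\nabla J-\nabla J_k)_p$ via the adjoint formula, adds and subtracts $\lambdabold^\star_{k,i}\,\pder{\rbm_i}{\rhobold_q}(\ubm^\star)$, applies mean-value linearizations to each difference of first derivatives, then substitutes the primal and adjoint error identities \eqref{eq:rom_proof_u_error} and \eqref{eq:rom_proof_lambda_error} to obtain the exact decomposition $\Cbm\rbm+\Dbm\rbm^{\mathrm{adj}}$ before taking norms. Your identification of the three summands of $\Cbm$ and of $\Dbm$ matches the paper's term-by-term.
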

\begin{remark}
  If the objective function $j: \Rbb^{N_\dstvcL} \to \Rbb$ is linear, then the matrix $\Abm$ given by \eqref{eq:rom_Abm} and consequently $\Bbm$ vanishes.  As a result, the adjoint and output error estimates in Theorems~\ref{thm:rom_adjoint_error} and \ref{thm:rom_output_error} simplify to 
  \begin{align*}
  \| \lambdabold^\star(\psibold) - \lambdabold_k^\star(\psibold) \|_{\Kbm(\rhobold^\star(\psibold))}
  &\leq \sigma_{\rm min}(\Kbm(\rhobold^\star(\psibold)))^{-1}\| \rbm^{\mathrm{adj}}(\lambdabold_k^\star(\psibold); \rhobold^\star(\psibold)) \|_2,
  \\
  | J(\psibold) - J_k(\psibold) |
  &\leq \sigma_{\rm min}(\Kbm(\rhobold^\star(\psibold)))^{-1} \| \rbm(\ubm_k^\star(\psibold); \rhobold^\star(\psibold)) \|_2\| \rbm^{\mathrm{adj}}(\lambdabold_k^\star(\psibold); \rhobold^\star(\psibold)) \|_2.
  \end{align*}
  In addition, in Theorem \ref{thm:rom_sensitivity_error}, the last term of $\Cbm$ in \eqref{eq:rom_Cbm} vanishes.
\end{remark}
\begin{remark}
  For the compliance output, the objective value and gradient error estimates further simplify to
  \begin{align*}
  | J(\psibold) - J_k(\psibold) |
  &\leq \sigma_{\rm min}(\Kbm(\rhobold^\star(\psibold)))^{-1} \| \rbm(\ubm_k^\star(\psibold); \rhobold^\star(\psibold)) \|_2^2, \\
  \| \nabla J(\psibold) - \nabla J_k(\psibold) \|_2
  &\leq
  (\sigma_{\mathrm{max}}(\Cbm) + \sigma_{\mathrm{max}}(\Dbm) ) \| \rbm(\ubm_k^\star(\psibold); \rhobold^\star(\psibold)) \|_2
  \end{align*}
  because $\lambdabold_k^\star(\psibold) = \ubm_k^\star(\psibold)$ and $\rbm(\cdot; \cdot) = \rbm^{\mathrm{adj}}(\cdot; \cdot)$.
\end{remark}
We may also obtain a ``looser-version'' of Theorem~\ref{thm:rom_output_error}, which does not involve the adjoint residual:
\begin{theorem}
  \label{thm:rom_output_error_simple}
  For any given $\psibold \in \Rbb^{N_e}$, the error in the objective function is bounded by
  \begin{align*}
    | J(\psibold) - J_k(\psibold) |
    \leq \| \lambdabold^\star \|_2 \| \rbm(\ubm_k^\star(\psibold); \rhobold^\star(\psibold)) \|_2 + \sigma_{\rm max}(\Bbm) \| \rbm(\ubm_k^\star(\psibold); \rhobold^\star(\psibold)) \|_2^2,
  \end{align*}
  where $\Bbm$ is given by \eqref{eq:rom_Bbm}.
\end{theorem}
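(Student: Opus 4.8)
The plan is to expand the objective $j$ to second order about the \emph{true} primal state $\ubm^\star(\psibold)$, convert the resulting first-order term into a residual-weighted quantity via the adjoint equation, and bound the two pieces separately; the ``looseness'' relative to Theorem~\ref{thm:rom_output_error} enters only in how the first-order term is estimated.

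First I would fix $\psibold$ and write $\rhobold = \rhobold^\star(\psibold)$, noting that $J$ and $J_k$ share the \emph{same} filtered density, so their difference comes purely from the elasticity state and it suffices to expand $j$ in its first argument. Setting $\ebm \coloneqq \ubm^\star(\psibold) - \ubm_k^\star(\psibold)$ and applying Taylor's theorem with integral remainder to $g(\theta) \coloneqq j((1-\theta)\ubm^\star + \theta \ubm_k^\star, \rhobold)$ between $\theta = 0$ and $\theta = 1$ gives
\begin{equation*}
 J_k(\psibold) - J(\psibold) = -\pder{j}{\ubm}(\ubm^\star,\rhobold)\,\ebm + \int_0^1 \theta\, \ebm^T \pder{^2 j}{\ubm^2}(\theta\ubm^\star + (1-\theta)\ubm_k^\star,\rhobold)\,\ebm\, d\theta,
\end{equation*}
where the change of variable $\theta \mapsto 1-\theta$ produces exactly the $\theta$-weighted Hessian integrand appearing in the definition~\eqref{eq:rom_Bbm} of $\Bbm$. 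Getting this bookkeeping right -- expanding about $\ubm^\star$ rather than about $\ubm_k^\star$ -- is the one place that requires care, since it is what aligns the remainder with $\Bbm$.

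Next I would dispatch the two terms. For the linear term, the adjoint equation~\eqref{eqn:linelast-adj1} together with the symmetry of $\Kbm$ gives $\pder{j}{\ubm}(\ubm^\star,\rhobold) = (\lambdabold^\star)^T\Kbm$, while subtracting the primal equation $\Kbm\ubm^\star = \fbm$ from the residual identity $\Kbm\ubm_k^\star - \fbm = \rbm(\ubm_k^\star;\rhobold)$ yields $\Kbm\ebm = -\rbm(\ubm_k^\star;\rhobold)$, hence $\ebm = -\Kbm^{-1}\rbm(\ubm_k^\star;\rhobold)$. Combining these, the first-order contribution to $J(\psibold) - J_k(\psibold)$ equals $-(\lambdabold^\star)^T\rbm(\ubm_k^\star;\rhobold)$, and Cauchy--Schwarz bounds it by $\|\lambdabold^\star\|_2\,\|\rbm(\ubm_k^\star;\rhobold)\|_2$. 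This is precisely where the estimate is loosened: rather than splitting $\lambdabold^\star = \lambdabold_k^\star + (\lambdabold^\star - \lambdabold_k^\star)$ to surface the adjoint residual as in Theorem~\ref{thm:rom_output_error}, I bound $\lambdabold^\star$ directly, avoiding the adjoint residual altogether.

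Finally, substituting $\ebm = -\Kbm^{-1}\rbm(\ubm_k^\star;\rhobold)$ into the remainder and pulling the constant factors $\Kbm^{-1}$ out of the $\theta$-integral reproduces the quadratic form $\rbm(\ubm_k^\star;\rhobold)^T\Bbm\,\rbm(\ubm_k^\star;\rhobold)$ with $\Bbm$ as in~\eqref{eq:rom_Bbm}; bounding it by $\sigma_{\mathrm{max}}(\Bbm)\,\|\rbm(\ubm_k^\star;\rhobold)\|_2^2$ (via Cauchy--Schwarz and the operator-norm estimate $\|\Bbm\vbm\|_2 \le \sigma_{\mathrm{max}}(\Bbm)\|\vbm\|_2$) and adding the two contributions through the triangle inequality yields the claim. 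No step is genuinely hard; the main thing to watch is the orientation of the Taylor expansion so the remainder matches $\Bbm$ exactly, after which the argument is a short manipulation that parallels the proof of Theorem~\ref{thm:rom_output_error} and diverges from it only at the treatment of the linear term.
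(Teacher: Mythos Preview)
Your proposal is correct and follows essentially the same route as the paper: a Taylor expansion of $j$ in its first argument (the paper obtains the identical identity \eqref{eq:rom_proof_jmjk} via the mean-value theorem plus integration by parts, which is just another way of writing your Taylor remainder), substitution of $\Kbm\ebm = -\rbm(\ubm_k^\star)$ and of the adjoint relation $\pder{j}{\ubm}(\ubm^\star)^T = \Kbm\lambdabold^\star$, and then Cauchy--Schwarz on each term. The only cosmetic difference is that the paper reuses the intermediate expansion already derived in the proof of Theorem~\ref{thm:rom_output_error} rather than re-deriving it.
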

Theorems \ref{thm:rom_primal_error}--\ref{thm:rom_output_error_simple} show that all quantities that are relevant in topology optimization---and in particular the objective function value and the associated gradient ---are well approximated by the ROM as long as the primal and adjoint residuals are small.  In other words, the primal and adjoint residuals serve as an indicator of the errors in the ROM approximations (up to a constant).  We will leverage this observation to devise residual-aware ROM-accelerated topology optimization strategies in Section~\ref{sec:tr}.

\subsection{Computational cost}
\label{sec:rom:comp}
We now assess the (online) computational cost of the ROM solution and the associated residual.

\emph{Solution evaluation.} We first analyze the cost of ROM solution evaluation: $\rhobold^\star(\psibold) \mapsto \hat \ubm_k(\psibold)$.  We consider the evaluation of the primal solution; the cost for the adjoint solution can be analyzed in a similar manner.  Given the density distribution $\rhobold^\star(\psibold) \in \Rbb^{N_e}$ and a reduced basis matrix $\Phibold_k \in \Rbb^{N_\dstvcL \times k}$, we decompose the computation of the ROM solution $\ubm_k(\rhobold) \in \Rbb^{N_\dstvcL}$ into three steps and assess the associated costs:
\begin{enumerate}
\item Assembly of the ROM stiffness matrix and load vector: given $\rhobold^\star(\psibold) \in \Rbb^{N_e}$, assemble
  \begin{equation*}
  \hat \Kbm_k(\rhobold^\star(\psibold))
  = \sum_{e=1}^{N_e} \alpha(\rho_e^{\star}(\psibold)) (\Pbold_e^T \Phibold_k)^T \Kbm_e (\Pbold_e^T \Phibold_k) \quad \mathrm{and} \quad
  \hat \fbm_k
  = \sum_{e=1}^{N_e} (\Pbold_e^T \Phibold_k)^T \fbm_e.
  \end{equation*}
  The computation of $\hat \Kbm_k(\rhobold^\star(\psibold))$ dominates the cost of this step.  The operation count is $N_e( 2 k (N_\dstvcL^e)^2 + 2 k^2 N_\dstvcL^e)$; assuming $k \gtrsim N_\dstvcL^e$, the cost is hence $\approx 2 N_\dstvcL^e k^2 N_e$. For $\Qbb^1$ finite elements in two and three dimensions, the cost is $\approx 16 k^2 N_e$ and $48 k^2 N_e$, respectively.  The cost of this step scales linearly with the number of finite elements $N_e$ and quadratically with the dimension of the reduced basis space $k$.  We also note that $\hat \fbm_k$ is independent of $\rhobold^\star(\psibold)$, and hence $\hat \fbm_k$ needs to be recomputed only when the reduced basis $\Phibold_k$ is updated; this is unlike $\hat \Kbm_k(\rhobold^\star(\psibold))$ which must be recomputed for every new density $\rhobold^\star(\psibold)$ encountered during the optimization procedure.
\item Solution of the ROM linear system: find $\hat \ubm_k(\psibold) \in \Rbb^k$ such that
  \begin{equation*}
    \hat \Kbm_k(\rhobold^\star(\psibold)) \hat \ubm_k^\star(\psibold) = \hat \fbm_k \quad \text{in } \Rbb^k.
  \end{equation*}
  The solution of the linear system requires $\approx k^3/3$ operations using Cholesky factorization.
\item Representation of the solution in the original vector space
  \begin{equation*}
    \ubm_k^\star(\psibold) = \Phibold_k \hat \ubm_k^\star(\psibold).
  \end{equation*}
  The multiplication requires $2 N_\dstvcL k$ operations.  For $\Qbb^1$ finite element in two and three dimensions, $N_\dstvcL \approx dN_e$ and hence the cost of this step is $\approx 4 k N_e$ and $6 k N_e$, respectively.  The cost of this step scales linearly with both $N_e$ and $k$.
\end{enumerate}
The first step dominates the overall computational cost, and hence the cost of finding the ROM solution is approximately $2 (N_\dstvcL^e) k^2 N_e$ (assuming $k \gtrsim N_\dstvcL^e)$; for $\Qbb^1$ finite element in two and dimensions, the cost is $\approx 16 k^2 N_e$ and $48 k^2 N_e$, respectively.

\emph{Residual evaluation.} We next analyze the cost of evaluating the 2-norm of the residual: $\hat \ubm_k^\star(\psibold) \times \rhobold^\star(\psibold) \mapsto \| \rbm(\Phibold_k \hat \ubm^\star_k(\psibold); \rhobold^\star(\psibold))\|_2$.  In particular, our interest is in the \emph{marginal} cost of the residual evaluation for many different densities $\rho^\star(\psibold)$ and ROM coefficients $\hat \ubm_k(\psibold)$ for a fixed reduced basis $\Phibold_k$.  To minimize the \emph{marginal} cost, we first precompute the density-independent quantity
\begin{equation*}
  \Abm_e = \Kbm_e (\Pbm_e^T \Phibold_k) \quad \text{in } \Rbb^{N_\dstvcL^e \times k}
\end{equation*}
for each $e = 1,\dots,N_e$.  This computation requires $(N_\dstvcL^e)^2 k N_e$ operations, but can be performed once and for all for a fixed $\Phibold_k$.  Given the precomputed quantity $\Abm_e$, the computation of the residual for each $\rhobold^\star(\psibold)$ and the associated solution $\hat \ubm_k(\psibold)$ is performed in two steps:
\begin{enumerate}
\item Construction of the residual vector $\rbm(\ubm^\star_k(\psibold); \rhobold^\star(\psibold))$: compute
  \begin{equation*}
    \rbm(\ubm^\star_k(\psibold); \rhobold^\star(\psibold)) =
    \sum_{e = 1}^{N_e} \Pbm_e [ \alpha(\rho_e^\star(\psibold)) \Abm_e \hat \ubm_k^\star(\psibold) ]  - \fbm
        \quad \text{in } \Rbb^{N_\dstvcL}.
  \end{equation*}
  The two major costs of the evaluation are associated with (i) the computation of $[  \alpha(\rho_e^\star(\psibold)) \Abm_e \ubm_k(\psibold) ] $, which requires $\approx N_\dstvcL^e k N_e$ operations, and (ii) the assembly $\sum_{e=1}^{N_e} \Pbm_e [\cdot]$, which is $\Ocal(N_\dstvcL^e N_e)$ (though the actual cost is highly dependent on the implementation and computer architecture).  For a sufficiently large $k$, the overall cost is $\approx N_\dstvcL^e k N_e$; for $\Qbb^1$ finite element in two and three dimensions, the cost is $\approx 8 k N_e$ and $24 k N_e$.
\item Evaluation of the 2-norm $\| \rbm(\ubm^\star_k(\psibold); \rhobold^\star(\psibold)) \|_2$.  This cost of the norm evaluation is $\approx 2 N_\dstvcL$.  For $\Qbb^1$ finite elements in two and three dimensions, the cost is $\approx 4 N_e$ and $6 N_e$, respectively.
\end{enumerate}
The first step dominates the overall computational cost, and hence the marginal cost of evaluating the residual is approximately $\approx N_\dstvcL^e k N_e$, which evaluates to $\approx 8 k N_e$ and $24 k N_e$ for $\Qbb^1$ finite element in two and three dimensions.  For a sufficiently large $k$, the marginal cost of residual evaluation is insignificant compared to the cost of the solution evaluation.

We compare the cost of the ROM analysis to the finite element analysis. For the finite element analysis, we consider the MBB beam configuration in Section~\ref{sec:numexp:mbb}, which has three times as many $\Qbb^1$ elements in the first dimension as in the second, and we employ a direct solver with an asymptotically optimal (i.e., nested dissection) ordering.  We recall from Section~\ref{sec:topopt:comp} that the computational cost for the finite element analysis in two and three dimensions are $\Ocal(N_e^{3/2})$ and $\Ocal(N_e^2)$, respectively.  (We also note that the storage requirement in two and three dimensions are $\Ocal(N_e \log(N_e))$ and $\Ocal(N_e^{4/3})$.)

To provide a concrete cost assessment of the ROM approximation in topology optimization, we show in Figure~\ref{fig:rom_cost} a breakdown of the ROM runtime as a fraction of the HDM runtime.  Consistent with the theory, for a sufficiently large $k$ the dominant cost of the ROM solve \emph{and} residual evaluation is associated with the construction of the ROM stiffness matrix; nevertheless, the runtime for the assembly is a small fraction of the HDM solve, varying from $\approx 0.3\%$ for $k = 5$ to $\approx 1\%$ for $k = 20$.  In other words, we can perform $\approx 100$ ROM solves and residual evaluations in the time it takes to complete a single HDM solve. 
\begin{figure}
 \centering
 \begin{tikzpicture}
\begin{groupplot} [
group style={group size = 2 by 1, horizontal sep = 2cm, vertical sep = 1.5cm}]
\nextgroupplot[width=0.45\textwidth, ytick={1e-8, 1e-6, 1e-4, 1e-2}, xlabel={Reduced basis size}, ymax=0.03, xmax=21.0, ylabel={Runtime (fraction of HDM solve)}, xmin=0.0, grid=both, ymin=1e-08, ymode=log]
\addplot [solid, thick, color=black, mark=mark options={solid, thin}, mark=*, mark size=1.5, color=black]
coordinates {
( 1.00000000e+00,  6.03556494e-06)
( 2.00000000e+00,  3.49092623e-05)
( 3.00000000e+00,  1.68956058e-04)
( 4.00000000e+00,  1.10826968e-04)
( 5.00000000e+00,  2.79393378e-04)
( 1.00000000e+01,  9.25254490e-04)
( 2.00000000e+01,  4.49566092e-03)};\label{line:rom:time_stiff}

\addplot [solid, thick, color=blue, mark=mark options={solid, thin}, mark=diamond*, mark size=1.5, color=blue]
coordinates {
( 1.00000000e+00,  3.18079839e-08)
( 2.00000000e+00,  5.24831734e-07)
( 3.00000000e+00,  7.64186812e-06)
( 4.00000000e+00,  3.05356645e-06)
( 5.00000000e+00,  4.41335776e-06)
( 1.00000000e+01,  2.39355078e-06)
( 2.00000000e+01,  5.07337342e-06)};\label{line:rom:time_solve}

\addplot [solid, thick, color=red, mark=mark options={solid, thin}, mark=triangle*, mark size=1.5, color=red]
coordinates {
( 1.00000000e+00,  2.57644669e-05)
( 2.00000000e+00,  1.50849363e-05)
( 3.00000000e+00,  5.28887252e-05)
( 4.00000000e+00,  6.02443214e-05)
( 5.00000000e+00,  8.04741991e-05)
( 1.00000000e+01,  1.44837654e-04)
( 2.00000000e+01,  4.72213376e-04)};\label{line:rom:time_recon}

\addplot [solid, thick, color=green, mark=mark options={solid, thin}, mark=square*, mark size=1.5, color=green]
coordinates {
( 1.00000000e+00,  2.49741180e-03)
( 2.00000000e+00,  2.67783464e-03)
( 3.00000000e+00,  2.88709937e-03)
( 4.00000000e+00,  2.96227754e-03)
( 5.00000000e+00,  3.11381872e-03)
( 1.00000000e+01,  3.66843068e-03)
( 2.00000000e+01,  5.14389968e-03)};\label{line:rom:time_resnm}

\nextgroupplot[ymax=0.03, xmode=log, ymode=log, width=0.45\textwidth, ytick={1e-8, 1e-6, 1e-4, 1e-2}, xlabel={Number of elements}, xmax=150000.0, ylabel={Runtime (fraction of HDM solve)}, xmin=1000.0, grid=both, ymin=1e-08]
\addplot [solid, thick, color=black, mark=mark options={solid, thin}, mark=*, mark size=1.5, color=black, forget plot]
coordinates {
( 1.20000000e+03,  4.53127725e-04)
( 1.87500000e+03,  3.22353900e-04)
( 7.50000000e+03,  4.65862401e-05)
( 3.00000000e+04,  1.64773903e-05)
( 1.20000000e+05,  6.03556494e-06)};

\addplot [solid, thick, color=black, mark=mark options={solid, thin}, mark=*, mark size=1.5, color=black, forget plot]
coordinates {
( 1.20000000e+03,  3.25337130e-03)
( 1.87500000e+03,  8.67669247e-04)
( 7.50000000e+03,  4.72005846e-03)
( 3.00000000e+04,  4.21978642e-03)
( 1.20000000e+05,  4.49566092e-03)};

\addplot [opacity=0.5, fill=black, forget plot]
coordinates {
( 1.20000000e+03,  4.53127725e-04)
( 1.87500000e+03,  3.22353900e-04)
( 7.50000000e+03,  4.65862401e-05)
( 3.00000000e+04,  1.64773903e-05)
( 1.20000000e+05,  6.03556494e-06)
( 1.20000000e+05,  4.49566092e-03)
( 3.00000000e+04,  4.21978642e-03)
( 7.50000000e+03,  4.72005846e-03)
( 1.87500000e+03,  8.67669247e-04)
( 1.20000000e+03,  3.25337130e-03)};

\addplot [solid, thick, color=blue, mark=mark options={solid, thin}, mark=diamond*, mark size=1.5, color=blue, forget plot]
coordinates {
( 1.20000000e+03,  3.21620625e-04)
( 1.87500000e+03,  2.56092265e-04)
( 7.50000000e+03,  5.56807651e-07)
( 3.00000000e+04,  1.09849268e-07)
( 1.20000000e+05,  3.18079839e-08)};

\addplot [solid, thick, color=blue, mark=mark options={solid, thin}, mark=diamond*, mark size=1.5, color=blue, forget plot]
coordinates {
( 1.20000000e+03,  9.99168075e-04)
( 1.87500000e+03,  5.75759882e-04)
( 7.50000000e+03,  1.25096119e-04)
( 3.00000000e+04,  2.74623171e-05)
( 1.20000000e+05,  5.07337342e-06)};

\addplot [opacity=0.5, fill=blue, forget plot]
coordinates {
( 1.20000000e+03,  3.21620625e-04)
( 1.87500000e+03,  2.56092265e-04)
( 7.50000000e+03,  5.56807651e-07)
( 3.00000000e+04,  1.09849268e-07)
( 1.20000000e+05,  3.18079839e-08)
( 1.20000000e+05,  5.07337342e-06)
( 3.00000000e+04,  2.74623171e-05)
( 7.50000000e+03,  1.25096119e-04)
( 1.87500000e+03,  5.75759882e-04)
( 1.20000000e+03,  9.99168075e-04)};

\addplot [solid, thick, color=red, mark=mark options={solid, thin}, mark=triangle*, mark size=1.5, color=red, forget plot]
coordinates {
( 1.20000000e+03,  7.87613175e-04)
( 1.87500000e+03,  1.59386095e-04)
( 7.50000000e+03,  1.14331171e-04)
( 3.00000000e+04,  4.97251022e-05)
( 1.20000000e+05,  2.57644669e-05)};

\addplot [solid, thick, color=red, mark=mark options={solid, thin}, mark=triangle*, mark size=1.5, color=red, forget plot]
coordinates {
( 1.20000000e+03,  5.31746100e-04)
( 1.87500000e+03,  3.37576167e-04)
( 7.50000000e+03,  1.38645105e-04)
( 3.00000000e+04,  1.93407945e-04)
( 1.20000000e+05,  4.72213376e-04)};

\addplot [opacity=0.5, fill=red, forget plot]
coordinates {
( 1.20000000e+03,  7.87613175e-04)
( 1.87500000e+03,  1.59386095e-04)
( 7.50000000e+03,  1.14331171e-04)
( 3.00000000e+04,  4.97251022e-05)
( 1.20000000e+05,  2.57644669e-05)
( 1.20000000e+05,  4.72213376e-04)
( 3.00000000e+04,  1.93407945e-04)
( 7.50000000e+03,  1.38645105e-04)
( 1.87500000e+03,  3.37576167e-04)
( 1.20000000e+03,  5.31746100e-04)};

\addplot [solid, thick, color=green, mark=mark options={solid, thin}, mark=square*, mark size=1.5, color=green, forget plot]
coordinates {
( 1.20000000e+03,  9.91735065e-03)
( 1.87500000e+03,  8.00691270e-03)
( 7.50000000e+03,  4.78576176e-03)
( 3.00000000e+04,  2.70525794e-03)
( 1.20000000e+05,  2.49741180e-03)};

\addplot [solid, thick, color=green, mark=mark options={solid, thin}, mark=square*, mark size=1.5, color=green, forget plot]
coordinates {
( 1.20000000e+03,  8.05052160e-03)
( 1.87500000e+03,  9.53451201e-03)
( 7.50000000e+03,  6.05602561e-03)
( 3.00000000e+04,  4.82150409e-03)
( 1.20000000e+05,  5.14389968e-03)};

\addplot [opacity=0.5, fill=green, forget plot]
coordinates {
( 1.20000000e+03,  9.91735065e-03)
( 1.87500000e+03,  8.00691270e-03)
( 7.50000000e+03,  4.78576176e-03)
( 3.00000000e+04,  2.70525794e-03)
( 1.20000000e+05,  2.49741180e-03)
( 1.20000000e+05,  5.14389968e-03)
( 3.00000000e+04,  4.82150409e-03)
( 7.50000000e+03,  6.05602561e-03)
( 1.87500000e+03,  9.53451201e-03)
( 1.20000000e+03,  8.05052160e-03)};

\end{groupplot}\end{tikzpicture}
 \caption{A decomposition of the runtime to perform the mappings
          $\ubm_k^\ast : \rhobold \mapsto \Rbb^{N_\dstvcL}$ and
          $(\ubm, \rhobold) \mapsto \norm{\rbm(\ubm,\rhobold)}_2$ for
          a typical two-dimensional problem as a function of reduced basis
          size for a fixed mesh with $600\times 200$ $\Qbb^1$ elements
          (\textit{left}) and as a function of mesh size for
          reduced basis sizes of $k\in [5, 20]$ (\textit{right}):
          evaluation and assembly of $\hat\Kbm_k(\rhobold)$
          (\ref{line:rom:time_stiff}); solution of reduced elasticity
          equations (\ref{line:rom:time_solve}); reconstruction of
          $\ubm_k = \Phibold_k \hat\ubm_k$ (\ref{line:rom:time_recon});
          and evaluation of $\norm{\rbm(\ubm,\rhobold)}_2$
          (\ref{line:rom:time_resnm}). In the right plot,
          for a fixed color, the lower line corresponds to a reduced basis
          of size $k=5$, the upper line corresponds to $k=20$, and the shaded
          region corresponds to $k\in(5,20)$.}
 \label{fig:rom_cost}
\end{figure}

\section{Globally convergent, error-aware trust-region method for
         efficient topology optimization}
\label{sec:tr}
In this section, we introduce an extension of the error-aware trust-region
method developed in \cite{zahr_phd_2016} for unconstrained
problems to problems with convex constraints. We specialize
the method to the present topology optimization setting and use the
projection-based ROM introduced in Section~\ref{sec:rom} as the trust-region
model and the residual as the error-aware trust-region constraint.
Global convergence theory for the proposed method is provided in
\ref{sec:tr-proofs}.

\subsection{Error-aware trust-region method}
\label{sec:tr:etr}
We begin by introducing the proposed error-aware trust-region method
as a general method to solve an optimization problem with convex
constraints. To this end, consider the following abstract optimization problem on a convex set $\Ccal \subset \Rbb^N$:
\begin{equation} \label{eqn:optim-gen}
 \optunc{\xbm\in\Ccal}{F(\xbm),}
\end{equation}
where $\func{F}{\Ccal}{\Rbb}$ and $\Ccal$ satisfy the following assumptions:
\begin{assume}[Constraints]~
  \label{assume:con}
  The constrained set $\Ccal \subset \Rbb^N$ satisfies the following conditions:
\begin{enumerate}[1)]
 \item\label{assume:con:c2} $\Ccal = \cap_{i=1}^m \Ccal_i$, where $\Ccal_i=\{\xbm\in\Rbb^N\mid c_i(\xbm)\geq 0\}$ and each $\func{c_i}{\Rbb^N}{\Rbb}$ is twice continuously differentiable on $\Rbb^N$;
 \item\label{assume:con:cvx} $\Ccal$ is nonempty, closed, convex; and
 \item\label{assume:con:qual} a first-order constraint qualification holds at any critical point of (\ref{eqn:optim-gen}).
\end{enumerate}
\end{assume}

\begin{assume}[Objective function]~
  \label{assume:obj}
  For a domain $\Ccal \subset \Rbb^N$ that satisfies Assumption~\ref{assume:con}, the objective function $F: \Ccal \to \Rbb$ satisfies the following conditions:
\begin{enumerate}[1)]
 \item\label{assume:obj:c2} $F$ is twice continuously differentiable on $\Ccal$;
 \item\label{assume:obj:bnd} $F$ is bounded from below on $\Ccal$; and
 \item\label{assume:obj:hess} $\|\nabla^2 F\|_2$ is bounded on $\Ccal$. 
\end{enumerate}
\end{assume}

To solve \eqref{eqn:optim-gen}, we introduce Algorithm~\ref{alg:etr}
that produces a sequence of points $\{\xbm_k\}_{k=1}^\infty$,
which we call \textit{trust-region centers}, that converges to a
first-order critical point. We use the following criticality
measure
\begin{equation}
  \label{eq:crt_meas}
 \chi(\xbm) \coloneqq
 \left|\min_{\substack{\xbm+\dbm\in\Ccal\\\norm{\dbm}_2\leq1}}
 \langle\nabla F(\xbm),\dbm\rangle\right|,
\end{equation}
where $\chi(\xbm^*) = 0$ implies $\xbm^*$ is a first-order critical
point of \eqref{eqn:optim-gen} \cite{conn_trust-region_2000}. At each trust-region center $\xbm_k$, we introduce an
inexpensive model $\func{m_k}{\Ccal}{\Rbb}$ intended to approximate
the objective function $F$ in the error-aware trust region
\begin{equation}
  \label{eq:tr:Bcal}
 \Bcal_k \coloneqq
 \left\{\xbm\in\Ccal \mid \vartheta_k(\xbm)\leq \Delta_k\right\},
\end{equation}
where $\func{\vartheta_k}{\Ccal}{\Rbb_{\geq 0}}$ is the trust-region
constraint and $\Delta_k\in\Rbb_{>0}$ is the trust-region radius. Standard
trust-region algorithms take $\vartheta_k(\xbm) = \norm{\xbm-\xbm_k}_2$;
however, we generalize the notion of a trust-region constraint to allow
for trust regions that take into account the \textit{error} in the
approximation model. We require the approximation models and trust-region constraints to satisfy the following assumptions:
\begin{assume}[Approximation model]~
  \label{assume:model}
   For a domain $\Ccal \subset \Rbb^N$ that satisfies Assumption~\ref{assume:con}, approximation models $m_k: \Ccal \to \Rbb$ associated with trust-region centers $\xbm_k \in \Ccal$, $k \in \Nbb$, satisfy the following conditions:
\begin{enumerate}[1)]
 \item\label{assume:model:c2} $m_k$ is twice continuously differentiable on $\Ccal$;
 \item\label{assume:model:val} $m_k(\xbm_k) = F(\xbm_k)$;
 \item\label{assume:model:grad} $\nabla m_k(\xbm_k) = \nabla F(\xbm_k)$;
 \item\label{assume:model:hess} there exists $\beta>0$ (independent of $k$)
 such that $\beta_k\leq\beta$ for all $k \in \Nbb$, where
 \begin{equation}
   \label{eq:assume4_betak}
  \beta_k \coloneqq 1 + \max_{\xbm\in\Ccal} \norm{\nabla^2 m_k(\xbm)}_2.
\end{equation}
\end{enumerate}
\end{assume}

\begin{assume}[Trust-region constraint]~
  \label{assume:trcon}
  For a domain $\Ccal \subset \Rbb^N$ that satisfies Assumption~\ref{assume:con}, the trust-region constraints $\func{\vartheta_k}{\Ccal}{\Rbb_{\geq 0}}$ associated with the trust-region centers $\xbm_k\in\Ccal$, $k\in\Nbb$, satisfy the following conditions:
\begin{enumerate}[1)]
 \item\label{assume:trcon:c2} $\vartheta_k$ is twice continuously differentiable on $\Ccal$;
 \item\label{assume:trcon:grad} there exists $\kappa_{\nabla\vartheta}>0$ (independent of $k$) such that $\max_{\xbm\in\Ccal} \norm{\nabla\vartheta_k(\xbm)}\leq\kappa_{\nabla\vartheta}$ for all $k \in \Nbb$;
 \item\label{assume:trcon:val} $\vartheta_k(\xbm_k) = 0$;
 \item\label{assume:trcon:err} there exist $\zeta > 0$ and $\nu > 1$ (independent of $k$) such that $|F(\xbm)-m_k(\xbm)|\leq \zeta \vartheta_k(\xbm)^\nu$ for all $\xbm\in\Bcal_k$. 
\end{enumerate}
\end{assume}

\begin{algorithm}
 \caption{Error-aware trust-region method with convex constraints}
 \label{alg:etr}
 \begin{algorithmic}[1]
  \STATE \textbf{Initialization}: Given
 \begin{center}
  $\xbm_0\in\Ccal$, $\Delta_0$, $\Delta_\text{max}$,
  $0<\gamma_1<\gamma_2<1$,
  $0 < \eta_1 < \eta_2 < 1$
 \end{center}
   \STATE \textbf{Model and constraint update}:
 Choose a model $m_k: \Ccal \to \Rbb$ and trust-region constraint $\vartheta_k: \Ccal \to \Rbb$
 that satisfy Assumptions~\ref{assume:model} and \ref{assume:trcon}.
\STATE \textbf{Step computation}: Approximately solve the trust-region
 subproblem
 \begin{equation*}
  \underset{\xbm\in\Bcal_k}{\min}~m_k(\xbm)
 \end{equation*}
 for a candidate step $\hat\xbm_k$ that satisfies the fraction of Cauchy decrease condition \eqref{eqn:fcd}.
\STATE \textbf{Actual-to-predicted reduction}:
Compute actual-to-predicted reduction ratio approximation
\begin{equation*}
 \varrho_k \coloneqq \frac{F(\xbm_k)-F(\hat\xbm_k)}
               {m_k(\xbm_k)-m_k(\hat\xbm_k)}.
\end{equation*}
\STATE \textbf{Step acceptance}:
\begin{equation*}
 \textbf{if} \qquad \varrho_k \geq \eta_1 \qquad \textbf{then}
  \qquad \xbm_{k+1} = \hat\xbm_k \qquad \textbf{else}
  \qquad \xbm_{k+1} = \xbm_k \qquad \textbf{end if}
\end{equation*}
\STATE \textbf{Trust region update}:
\begin{equation*}
\begin{aligned}
\textbf{if} \qquad &\varrho_k < \eta_1 \qquad &\textbf{then} \qquad
&\Delta_{k+1} \in [\gamma_1\Delta_k, \gamma_2\Delta_k] \quad
&\textbf{end if}\\
\textbf{if} \qquad &\varrho_k \in [\eta_1, \eta_2)\qquad &\textbf{then}\qquad
 &\Delta_{k+1} \in [\gamma_2\Delta_k, \Delta_k] \qquad
 &\textbf{end if}\\
\textbf{if} \qquad &\varrho_k \geq \eta_2 \qquad &\textbf{then} \qquad
&\Delta_{k+1} \in [\Delta_k, \Delta_\text{max}] \qquad &\textbf{end if}\\
\end{aligned}
\end{equation*}
\end{algorithmic}
\end{algorithm}

The error-aware trust-region algorithm is described in Algorithm~\ref{alg:etr}. The algorithm is initialized from
an initial guess for the optimization solution $\xbm_0\in\Ccal$,
an initial trust-region radius $\Delta_0$, and a number of
other (standard) constants \cite{conn_trust-region_2000}.
If the starting point is not feasible (i.e., $\xbm_0\notin\Ccal$),
an auxiliary feasibility restoration problem is solved to obtain
a feasible initial guess.
At iteration $k$, an approximation
model and trust-region constraint are constructed that satisfy
Assumptions~\ref{assume:model} and \ref{assume:trcon} to generate
the trust-region subproblem
\begin{equation} \label{eqn:trsub}
 \underset{\xbm\in\Bcal_k}{\min}~m_k(\xbm).
\end{equation}
We (approximately) solve this subproblem whose solution,
denoted $\hat\xbm_k$, is a \textit{candidate} for the next trust-region
center $\xbm_{k+1}$. It is important to note that the trust-region subproblem
does not need to be solved exactly; it only needs to find a point that
satisfies the \textit{fraction of Cauchy decrease condition}:
  \begin{equation} \label{eqn:fcd}
    m_k(\xbm_k) - m_k(\hat\xbm_k) \geq
    \kappa \chi(\xbm_k)
    \min\left[\frac{\chi(\xbm_k)}{\beta_k},\kappa'\Delta_k,1\right],
  \end{equation}
  where $\kappa\in(0,1)$ and $\kappa'>0$ are constants (independent of $k$), and $\beta_k$ is given by \eqref{eq:assume4_betak}. 
Theorem~\ref{thm:fcd} establishes the existence of such a point within the
trust region.
\begin{theorem} \label{thm:fcd}
  Suppose Assumptions~\ref{assume:con}.\ref{assume:con:c2}--\ref{assume:con}.\ref{assume:con:qual},~\ref{assume:obj}.\ref{assume:obj:c2},~\ref{assume:model}.\ref{assume:model:c2}--\ref{assume:model}.\ref{assume:model:hess},~\ref{assume:trcon}.\ref{assume:trcon:c2}--\ref{assume:trcon}.\ref{assume:trcon:val} hold. Then there exists a point $\xbm\in\Bcal_k$ that satisfies the fraction of (generalized) Cauchy decrease condition~\eqref{eqn:fcd} for $\kappa' = \kappa_{\nabla\vartheta}^{-1}$: i.e.,
  \begin{equation}
m_k(\xbm_k) - m_k(\xbm) \geq
\kappa \chi(\xbm_k)
\min\left[\frac{\chi(\xbm_k)}{\beta_k}, \kappa_{\nabla\vartheta}^{-1}\Delta_k, 1\right].
  \end{equation}
  \begin{proof}
    See \ref{sec:tr-proofs}.
  \end{proof}
\end{theorem}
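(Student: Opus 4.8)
The plan is to prove the bound constructively, by exhibiting an explicit generalized Cauchy point along the steepest feasible descent direction and verifying it lies in the error-aware trust region $\Bcal_k$. First I would extract from the criticality measure \eqref{eq:crt_meas} a direction $\dbm\in\Rbb^N$ with $\norm{\dbm}_2\leq 1$ and $\xbm_k+\dbm\in\Ccal$ attaining the minimum, so that $\langle\nabla F(\xbm_k),\dbm\rangle = -\chi(\xbm_k)$; such a minimizer exists because the feasible set $\{\dbm \mid \xbm_k+\dbm\in\Ccal,\ \norm{\dbm}_2\leq 1\}$ is compact (closed by Assumption~\ref{assume:con}.\ref{assume:con:cvx}, bounded by the norm constraint) and the linear objective is continuous. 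Since $\nabla m_k(\xbm_k)=\nabla F(\xbm_k)$ by Assumption~\ref{assume:model}.\ref{assume:model:grad}, this same $\dbm$ is a descent direction for the model with $\langle\nabla m_k(\xbm_k),\dbm\rangle=-\chi(\xbm_k)$. I would then study the one-parameter family $\xbm(t)\coloneqq\xbm_k+t\dbm$ for $t\in[0,1]$ and seek a step length that keeps $\xbm(t)\in\Bcal_k$ while reducing the model.

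The feasibility of $\xbm(t)$ splits into two parts. Staying in $\Ccal$ is immediate for $t\in[0,1]$: since $\xbm_k\in\Ccal$ and $\xbm_k+\dbm\in\Ccal$, convexity (Assumption~\ref{assume:con}.\ref{assume:con:cvx}) gives $\xbm(t)=(1-t)\xbm_k+t(\xbm_k+\dbm)\in\Ccal$. Staying inside the error-aware trust region is where the generalized constraint enters: using $\vartheta_k(\xbm_k)=0$ (Assumption~\ref{assume:trcon}.\ref{assume:trcon:val}), the fundamental theorem of calculus, and the uniform gradient bound $\norm{\nabla\vartheta_k}\leq\kappa_{\nabla\vartheta}$ (Assumption~\ref{assume:trcon}.\ref{assume:trcon:grad}), I would bound $\vartheta_k(\xbm(t))=\int_0^t\langle\nabla\vartheta_k(\xbm(s)),\dbm\rangle\,ds\leq t\kappa_{\nabla\vartheta}\norm{\dbm}_2\leq t\kappa_{\nabla\vartheta}$, so $\xbm(t)\in\Bcal_k$ whenever $t\leq\kappa_{\nabla\vartheta}^{-1}\Delta_k$. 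Hence $\xbm(t)$ is an admissible candidate for all $t\in[0,t_{\max}]$ with $t_{\max}\coloneqq\min(1,\kappa_{\nabla\vartheta}^{-1}\Delta_k)$.

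For the decrease, I would Taylor-expand $m_k$ along the segment with integral remainder, using that $\xbm(s)\in\Ccal$ for $s\in[0,1]$ and the Hessian bound $\max_{\xbm\in\Ccal}\norm{\nabla^2 m_k(\xbm)}_2=\beta_k-1\leq\beta_k$ implied by \eqref{eq:assume4_betak}, to obtain $m_k(\xbm_k)-m_k(\xbm(t))\geq t\chi(\xbm_k)-\tfrac12\beta_k t^2$ for $t\in[0,t_{\max}]$. The final step is the standard maximization of the concave quadratic $t\mapsto t\chi(\xbm_k)-\tfrac12\beta_k t^2$ over $[0,t_{\max}]$: a short case analysis comparing the unconstrained maximizer $\chi(\xbm_k)/\beta_k$ against $t_{\max}$ yields the lower bound $\tfrac12\chi(\xbm_k)\min\big[\chi(\xbm_k)/\beta_k,\ \kappa_{\nabla\vartheta}^{-1}\Delta_k,\ 1\big]$, which is exactly the claimed estimate with $\kappa=\tfrac12$ (so any $\kappa\in(0,1)$ is admissible) and $\kappa'=\kappa_{\nabla\vartheta}^{-1}$.

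The main obstacle is not the model-decrease computation, which is classical, but correctly handling the generalized (error-aware) trust-region constraint: unlike a standard ball $\norm{\xbm-\xbm_k}_2\leq\Delta_k$, the sublevel set $\{\vartheta_k\leq\Delta_k\}$ has no a priori known geometry, and the argument hinges entirely on the gradient bound of Assumption~\ref{assume:trcon}.\ref{assume:trcon:grad} to translate step length into a guaranteed bound on $\vartheta_k$. Ensuring that both feasibility requirements (the convex set $\Ccal$ and the trust region $\Bcal_k$) are respected along the \emph{entire} Cauchy segment, rather than only at its endpoint, is the delicate point that justifies the particular cap $t_{\max}=\min(1,\kappa_{\nabla\vartheta}^{-1}\Delta_k)$ and hence the appearance of $\kappa_{\nabla\vartheta}^{-1}\Delta_k$ inside the minimum.
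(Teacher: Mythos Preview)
Your argument is correct and self-contained. The paper takes a slightly different, more modular route: it defines the auxiliary ball $\Dcal_k\coloneqq\{\xbm\in\Ccal\mid\norm{\xbm-\xbm_k}_2\leq\kappa_{\nabla\vartheta}^{-1}\Delta_k\}$, proves the inclusion $\Dcal_k\subset\Bcal_k$ via the mean-value theorem and the gradient bound on $\vartheta_k$ (the same mechanism you use in your integral estimate), and then invokes the classical Cauchy-decrease result for convex-constrained trust-region subproblems (Theorem~12.2.2 of \cite{conn_trust-region_2000}) on the standard ball $\Dcal_k$. In other words, the paper reduces the error-aware trust region to a standard one and cites a black-box result, whereas you unwrap that black box by constructing the Cauchy point along the feasible steepest-descent direction explicitly. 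Both hinge on exactly the same idea---Assumption~\ref{assume:trcon}.\ref{assume:trcon:grad} converts step length into a bound on $\vartheta_k$---and yield the same constant $\kappa'=\kappa_{\nabla\vartheta}^{-1}$. Your direct construction has the advantage of being self-contained and exhibiting the constant $\kappa=\tfrac12$ explicitly; the paper's reduction is shorter and makes clear that nothing beyond standard trust-region theory is needed once the inclusion $\Dcal_k\subset\Bcal_k$ is established.
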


Finally, we compute the actual-to-predicted reduction,
\begin{equation}
 \varrho_k \coloneqq \frac{F(\xbm_k)-F(\hat\xbm_k)}
                          {m_k(\xbm_k)-m_k(\hat\xbm_k)},
\end{equation}
and use it to assess whether to accept or reject the candidate step
as the next trust-region center. If $\rho_k > \eta_1$ for
some constant $\eta_1\in(0,1)$, the candidate is accepted; otherwise
it is rejected. In addition, the actual-to-predicted reduction is
used to modify the trust-region radius for the next iteration. If
$\rho_k < \eta_1$, the step is called \textit{unsuccessful} and
the radius is decreased to $\Delta_{k+1}\in[\gamma_1\Delta_k,\gamma_2\Delta_k]$,
where $0<\gamma_1<\gamma_2<1$. If $\rho_k\in[\eta_1,\eta_2)$, where
$\eta_2\in(\eta_1,1)$ is a constant, the step is considered
\textit{successful} and the radius is updated to lie in the range
$\Delta_{k+1} = [\gamma_2\Delta_k,\Delta_k]$ (usually $\Delta_{k+1}=\Delta_k$).
Finally, if $\rho_k \geq \eta_2$, the step is called \textit{very successful}
and the radius is increased or kept the same.
We terminate Algorithm~\ref{alg:etr}
when $\norm{\xbm_k-P_\Ccal(\xbm_k-\nabla F(\xbm_k))}$ falls below a predefined
tolerance, where $P_\Ccal$ is the projection operator onto the feasible set
$\Ccal$. 

This error-aware trust-region algorithm (Algorithm~\ref{alg:etr}) is \textit{globally convergent}: for any $\xbm_0 \in \Ccal$ and $\Delta_0 \in \Rbb_{>0}$, the algorithm generates a sequence $\{\xbm_k\}_{k=1}^\infty$ such that any convergent subsequence converges to a first-order critical point of (\ref{eqn:optim-gen}).  The result is summarized in the following theorem:
\begin{theorem} \label{thm:globconv}
Suppose Assumptions~\ref{assume:con}.\ref{assume:con:c2}--\ref{assume:con}.\ref{assume:con:qual},~\ref{assume:obj}.\ref{assume:obj:c2}--\ref{assume:obj}.\ref{assume:obj:hess},~\ref{assume:model}.\ref{assume:model:c2}--\ref{assume:model}.\ref{assume:model:hess},~\ref{assume:trcon}.\ref{assume:trcon:c2}--\ref{assume:trcon}.\ref{assume:trcon:err} hold. Then
\begin{equation}
 \liminf_{k\rightarrow\infty}~\chi(\xbm_k) = 0,
\end{equation}
where $\chi: \Ccal \to \Rbb$ is the criticality measure~\eqref{eq:crt_meas}.
\begin{proof}
  See \ref{sec:tr-proofs}.
\end{proof}
\end{theorem}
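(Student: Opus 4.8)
The plan is to argue by contradiction, following the classical global-convergence template for trust-region methods \cite{conn_trust-region_2000} but replacing the usual second-order Taylor estimate of the actual-minus-predicted reduction with the error-aware bound supplied by Assumption~\ref{assume:trcon}.\ref{assume:trcon:err}. Suppose the conclusion fails; then there exist $\epsilon>0$ and an index $k_0$ such that $\chi(\xbm_k)\geq\epsilon$ for all $k\geq k_0$. I would derive a contradiction with the lower-boundedness of $F$ (Assumption~\ref{assume:obj}.\ref{assume:obj:bnd}).

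First I would record the predicted-reduction lower bound. By Theorem~\ref{thm:fcd} the candidate step $\hat\xbm_k$ satisfies the fraction of Cauchy decrease condition with $\kappa'=\kappa_{\nabla\vartheta}^{-1}$, so using $\beta_k\leq\beta$ (Assumption~\ref{assume:model}.\ref{assume:model:hess}) and $\chi(\xbm_k)\geq\epsilon$ gives $m_k(\xbm_k)-m_k(\hat\xbm_k)\geq \kappa\epsilon\min[\epsilon/\beta,\kappa_{\nabla\vartheta}^{-1}\Delta_k,1]$. The key estimate is on the numerator of $\varrho_k$: since $m_k(\xbm_k)=F(\xbm_k)$ (Assumption~\ref{assume:model}.\ref{assume:model:val}), the actual-minus-predicted reduction collapses to $|F(\hat\xbm_k)-m_k(\hat\xbm_k)|$, which is bounded by $\zeta\,\vartheta_k(\hat\xbm_k)^\nu\leq\zeta\Delta_k^\nu$ because $\hat\xbm_k\in\Bcal_k$ forces $\vartheta_k(\hat\xbm_k)\leq\Delta_k$. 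Dividing by the predicted reduction, for $\Delta_k$ small enough that the minimum above is attained at $\kappa_{\nabla\vartheta}^{-1}\Delta_k$, I obtain $|\varrho_k-1|\leq (\zeta\kappa_{\nabla\vartheta}/\kappa\epsilon)\,\Delta_k^{\nu-1}$. Because $\nu>1$, this bound tends to zero with $\Delta_k$, so there is a threshold $\bar\Delta>0$ such that $\Delta_k\leq\bar\Delta$ forces $\varrho_k\geq\eta_2$; that is, whenever the radius is small the step is very successful and the radius is not decreased.

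Next I would use this to bound the radius below. A short induction on the trust-region update rule shows $\Delta_k\geq\Delta_{\min}\coloneqq\min(\Delta_{k_0},\gamma_1\bar\Delta)>0$ for all $k\geq k_0$: if $\Delta_k\leq\bar\Delta$ the step is very successful so $\Delta_{k+1}\geq\Delta_k$, while if $\Delta_k>\bar\Delta$ the update never shrinks it below $\gamma_1\bar\Delta$. I would then argue there are infinitely many successful iterations beyond $k_0$; otherwise every step past some index is unsuccessful, forcing $\Delta_{k+1}\leq\gamma_2\Delta_k$ and hence $\Delta_k\to0$, contradicting the lower bound.

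Finally, at every successful iteration $k\geq k_0$ the objective decreases by a fixed amount, $F(\xbm_k)-F(\xbm_{k+1})\geq\eta_1[m_k(\xbm_k)-m_k(\hat\xbm_k)]\geq\eta_1\kappa\epsilon\min[\epsilon/\beta,\kappa_{\nabla\vartheta}^{-1}\Delta_{\min},1]\eqqcolon\delta>0$, while $F$ is unchanged on unsuccessful steps. Summing over the infinitely many successful iterations drives $F(\xbm_k)\to-\infty$, contradicting Assumption~\ref{assume:obj}.\ref{assume:obj:bnd}; hence the hypothesis is untenable and $\liminf_{k\to\infty}\chi(\xbm_k)=0$. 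The only genuinely new ingredient, and the step I expect to require the most care, is the actual-minus-predicted estimate: the exponent $\nu>1$ in Assumption~\ref{assume:trcon}.\ref{assume:trcon:err} is exactly what replaces the usual $\mathcal{O}(\Delta_k^2)$ model-accuracy bound and guarantees $\varrho_k\to1$ as $\Delta_k\to0$. Verifying that the minimum in the Cauchy-decrease bound is attained at the $\Delta_k$ term for small radii (using $\chi(\xbm_k)\geq\epsilon$ and $\beta_k\leq\beta$), and tracking the constants so that the threshold $\bar\Delta$ is well defined, is where the bookkeeping concentrates.
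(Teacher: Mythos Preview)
Your proposal is correct and uses the same essential ingredients as the paper: the fraction-of-Cauchy-decrease bound from Theorem~\ref{thm:fcd}, the error-aware estimate $|F(\hat\xbm_k)-m_k(\hat\xbm_k)|\leq\zeta\Delta_k^\nu$ from Assumption~\ref{assume:trcon}.\ref{assume:trcon:err}, and the contradiction with boundedness below of $F$.

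The organization, however, differs. The paper first proves (Lemma~\ref{lemma:delta-zero}) that under the hypothesis $\chi(\xbm_k)\geq\epsilon$ the radii are \emph{summable}, $\sum_k\Delta_k<\infty$, by telescoping the objective decrease at successful iterations; then (Lemma~\ref{lemma:rho-one}) uses $\Delta_k\to0$ to show $\varrho_k\to1$; and finally observes that eventually all steps are successful so $\Delta_k$ stops shrinking, contradicting $\Delta_k\to0$. Your route is more direct: you establish the threshold $\bar\Delta$ first, deduce $\Delta_k\geq\Delta_{\min}$ by induction, and then get the contradiction from infinite total decrease of $F$. Both patterns are standard in trust-region convergence theory; yours avoids the summability detour at the cost of the small induction on the radius, while the paper's lemma structure isolates the two mechanisms ($\Delta_k\to0$ and $\varrho_k\to1$) as reusable statements.
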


\subsection{Error-aware trust-region method for topology optimization}
\label{sec:tr:etr-topopt}
We propose an error-aware trust-region method (Section~\ref{sec:tr:etr})
to efficiently solve the topology optimization problem (\ref{eqn:topopt0})
using projection-based ROMs introduced in Section~\ref{sec:rom} as the approximation model.
The topology optimization problem \eqref{eqn:topopt0} exactly fits
the form of the general optimization problem \eqref{eqn:optim-gen},
where the optimization variables are the unfiltered element densities
$\xbm\coloneqq \psibold \in \Rbb^{N \coloneq N_e}$, the objective function
is the topology optimization objective function~\eqref{eq:Jfom} 
\begin{equation} \label{eqn:obj-topopt}
 F(\psibold) \coloneqq J(\psibold) \coloneqq
 j(\ubm_u^\star(\psibold),\rhobold^\star(\psibold)),
\end{equation}
and the constraint set is the intersection of the simple bounds and
volume constraint 
\begin{equation} \label{eqn:feaset-topopt}
 \Ccal \coloneqq
 \left\{
 \psibold\in \Psi \coloneqq [0,1]^{N_e} \suchthat
 \sum_{e=1}^{N_e} \psi_e|\Omega_e| \leq V
   \right\}.
\end{equation}
As we will see in Theorem~\ref{thm:globconv2}, the objective function and constraint satisfy Assumptions~\ref{assume:con} and \ref{assume:obj}.
\begin{remark}
Assumptions~\ref{assume:con} and \ref{assume:obj} include a range of
practical topology optimization problems; however, they preclude
problems with nonlinear, non-convex constraints that involve the
structural state (e.g., stress bounds). Such problems would require a
trust-region method that explicitly accounts for state-dependent
constraints, e.g., sequential quadratic programming.
\end{remark}

We now outline the procedure to construct an approximation model based using projection-based reduced-order models (Section~\ref{sec:rom}) that satisfies the requirements for global convergence in Assumption~\ref{assume:model}. Suppose $k$ trust-region iterations have been completed and we collected the density fields defining the trust-region centers
\begin{equation*}
  \Psibold_k \coloneqq ( \psibold^{(0)}, \dots, \psibold^{(k)} ).
\end{equation*}
We collect the associated primal and adjoint solutions in matrices of the form
\begin{align*}
  \Ubm_k &\coloneqq [ \ubm^\star(\psibold^{(0)}), \cdots, \ubm^\star(\psibold^{(k)}) ] \in \Rbb^{N_{\ubm} \times (k+1)}, \\
  \Lambdabold_k &\coloneqq [ \lambdabold^\star(\psibold^{(0)}), \cdots, \lambdabold^\star(\psibold^{(k)}) ] \in \Rbb^{N_{\ubm} \times (k+1)},
\end{align*}
and apply POD to the first $k$ snapshots to obtain ``compressed'' matrices
\begin{align*}
  \Phibold_{n_k}^{\Ubm_{k-1}, \rm POD} &\coloneqq \textrm{POD}_{n_k}(\Ubm_{k-1}) \in \Rbb^{N_{\ubm} \times n_k}, \\
  \Phibold_{n_k}^{\Lambdabold_{k-1}, \rm POD} &\coloneqq \textrm{POD}_{n_k}(\Lambdabold_{k-1}) \in \Rbb^{N_{\ubm} \times n_k},
\end{align*}
where $\textrm{POD}_n : \Rbb^{N_{\ubm} \times (k-1)} \to \Rbb^{N_{\ubm} \times n}$ applies the singular value decomposition to the input matrix $\Ubm$ (resp.~$\Lambdabold$) and extract the first $n$ left singular vectors to form $\Phibold_n^{\Ubm, \rm POD}$ (resp.~$\Phibold_n^{\Lambdabold, \rm POD}$).  We combine the POD bases and the primal and adjoint solution at the trust-region center, $\ubm^\star(\psibold^{(k)})$ and $\lambdabold^\star(\psibold^{(k)})$, and orthonormalize the resulting matrix using the Gram-Schmidt procedure to obtain the reduced basis (matrix):
\begin{equation} \label{eqn:rob-optim}
  \Phibold_{j_k} = \mathrm{GramSchmidt}([\Phibold_{n_k}^{\Ubm_{k-1}, \rm POD},\Phibold_{n_k}^{\Lambdabold_{k-1}, \rm POD}, \ubm^\star(\psibold^{(k)}), \lambdabold^\star(\psibold^{(k)})  ]) \in \Rbb^{N_{\ubm} \times j_k},
\end{equation}
where $n_k\in\{0,1,\dots,k-1\}$ is the number of vectors retained after
POD is applied to the primal and adjoint snapshot matrices and
$j_k\coloneqq 2(n_k+1)$ is the size of the reduced basis.

Then the approximation model is the ROM objective function~\eqref{eq:Jrom}:
\begin{equation} \label{eqn:model-topopt}
  m_k(\psibold)\coloneqq J_{j_k}(\psibold) \coloneqq
  j(\ubm_{j_k}^\star(\psibold), \rhobold^\star(\psibold)),
\end{equation}
where $\ubm_{j_k}^\star(\psibold)$ is the solution of the ROM associated with the basis $\Phibold_{j_k}$. 
The choice of basis in (\ref{eqn:rob-optim}) guarantees that
\begin{equation*}
 \ubm^\star(\psibold^{(k)}), \lambdabold^\star(\psibold^{(k)}) \in
 \mathrm{Img}(\Phibold_{j_k})
\end{equation*}
\textit{regardless of $n_k$}.  Hence by Remarks~\ref{rem:rom_exact} and \ref{rem:rom_adjoint}, the primal and adjoint solution of the ROM will be
exact at the trust-region center $\psibold^{(k)}$: 
\begin{equation*}
 \ubm^\star(\psibold^{(k)}) = \ubm_{j_k}^\star(\psibold^{(k)}), \quad
 \lambdabold^\star(\psibold^{(k)}) = \lambdabold_{j_k}^\star(\psibold^{(k)}).
\end{equation*}
As a result, the approximation model and its gradient agree with
the original topology optimization objective at the trust-region center
(Theorem~\ref{thm:rom_output_error}), which establishes
Assumptions~\ref{assume:model}.\ref{assume:model:val}--\ref{assume:model}.\ref{assume:model:grad}, regardless of $n_k$.
\begin{remark}
By defining a single ROM from a basis constructed from primal and dual
snapshots, it is guaranteed that the ROM objective and gradient evaluations
will be consistent with each other \cite{zahr_phd_2016}, which is important
for convergence of the trust-region subproblem, particularly when using
black-box solvers.
\end{remark}
\begin{remark}
  For compliance output, the primal and adjoint solutions are identical: $\lambdabold^\star(\psibold^{(i)}) = \ubm^\star(\psibold^{(i)})$, $i = 0,\dots,k$.  Hence, we need to construct only the primal snapshot matrix $\Ubm_{k-1}$ and the associated POD matrix $\Phibold_{n_k}^{\Ubm_{k-1}, \rm POD}$.  The reduced basis (matrix) is then given by
  \begin{equation*}
    \Phibold_{j_k} = \mathrm{GramSchmidt}([\Phibold_{n_k}^{\Ubm_{k-1}, \rm POD}, \ubm^\star(\psibold^{(k)}) ]) \in \Rbb^{N_\ubm \times j_k},
  \end{equation*}
where $j_k = n_k+1$.
  Note that, because the columns of $\Phibold_{n_k}^{\Ubm_{k-1},\rm POD}$ are orthonormal, we need to perform just one step of Gram-Schmidt to orthonormalize $\ubm^\star(\psibold^{(k)})$ with respect to $\Phibold_{n_k}^{\Ubm_{k-1}, \rm POD}$.
\end{remark}
In this work, we consider two instances of the trust-region constraint. The
first is the traditional trust-region constraint of the form
\begin{equation} \label{eqn:trcon-topopt-dist}
 \vartheta_k(\psibold) \coloneqq \norm{\psibold-\psibold^{(k)}}_2.
\end{equation}
This constraint is simple to implement, efficient to evaluate, and
will lead to a globally convergent method \cite{conn_trust-region_2000};
however, it does not account for the error in the approximation model.
The other trust-region constraint we consider is the norm of the primal
residual evaluated at the reconstructed ROM state
\begin{equation} \label{eqn:trcon-topopt}
  \vartheta_k(\psibold) \coloneqq\norm{\rbm(\ubm_{j_k}^\star(\psibold);\rhobold^\star(\psibold))}_2
\end{equation}
since it bounds the error in the approximation model
(Theorem~\ref{thm:rom_output_error}) and provides a notion of an error-aware
trust region. Theorem~\ref{thm:globconv2} will verify (\ref{eqn:trcon-topopt})
satisfies Assumption~\ref{assume:trcon}, thus leading to a globally
convergent method. Furthermore, this choice of trust-region constraint is
relevant given that the residual norm has proven to be a popular
indicator to trigger adaptation to a reduced basis constructed
on-the-fly, both in the context of topology optimization
\cite{gogu_improving_2015, choi_accelerating_2019, xiao_fly_2020}
and more generally
\cite{zahr_progressive_2015}; however, these methods are heuristic and
cannot guarantee global convergence. The proposed trust-region framework
provides a rigorous setting that can be used to adapt existing methods such
that they are guaranteed to converge to a local minima from any starting point.


\begin{algorithm}
 \caption{Error-aware trust-region method for efficient topology optimization}
 \label{alg:etr:topopt}
 \begin{algorithmic}[1]
  \STATE \textbf{Initialization}: Given
 \begin{center}
  $\psibold^{(0)}\in\Ccal$, $\Ubm_0 = \Lambdabold_0 = \emptyset$,
  $\Delta_0$, $\Delta_\text{max}$,
  $0<\gamma_1<\gamma_2<1$,
  $0 < \eta_1 < \eta_2 < 1$
 \end{center}
   \STATE\label{step:modelUpdate} \textbf{Model and constraint update}: Compute
 HDM primal/adjoint solutions and define reduced basis as
\begin{equation*}
 \Phibold_{j_k} = \mathrm{GramSchmidt}([\mathrm{POD}_{n_k}(\Ubm_{k-1}), \mathrm{POD}_{n_k}(\Lambdabold_{k-1}), \ubm^\star(\psibold^{(k)}), \lambdabold^\star(\psibold^{(k)})])
\end{equation*}
where $0\leq n_k\leq k-1$ and $j_k = 2(n_k+1)$. The approximation model
and trust-region constraint are taken as
\begin{equation*}
  m_k(\psibold)\coloneqq J_{j_k}(\psibold)
  \coloneqq  j(\ubm_{j_k}^\star(\psibold), \rhobold^\star(\psibold))
  , \qquad
 \vartheta_k(\psibold) \coloneqq\norm{\rbm(\ubm_{j_k}^\star(\psibold);\rhobold^\star(\psibold))}_2
\end{equation*}
and the snapshot matrices are updated for the next iteration
\begin{equation*}
 \Ubm_k = \begin{bmatrix} \Ubm_{k-1} & \ubm^\star(\psibold^{(k)})\end{bmatrix},
 \quad
 \Lambdabold_k = \begin{bmatrix} \Lambdabold_{k-1} & \lambdabold^\star(\psibold^{(k)})\end{bmatrix}.
\end{equation*}
\STATE \textbf{Step computation}: Approximately solve the trust-region
 subproblem
 \begin{equation*}
  \underset{\psibold\in\Bcal_k}{\min}~m_k(\psibold)
 \end{equation*}
 for a candidate step $\hat\psibold^{(k)}$ that satisfies (\ref{eqn:fcd}).
\STATE\label{step:actualToPredict} \textbf{Actual-to-predicted reduction}:
Compute actual-to-predicted reduction ratio approximation
\begin{equation*}
 \varrho_k \coloneqq \frac{J(\psibold^{(k)})-J(\hat\psibold^{(k)})}
                          {J_k(\psibold^{(k)})-J_k(\hat\psibold^{(k)})}.
\end{equation*}
\STATE \textbf{Step acceptance}:
\begin{equation*}
 \textbf{if} \qquad \varrho_k \geq \eta_1 \qquad \textbf{then}
  \qquad \psibold^{(k+1)} = \hat\psibold^{(k)} \qquad \textbf{else}
  \qquad \psibold^{(k+1)} = \psibold^{(k)} \qquad \textbf{end if}
\end{equation*}
\STATE \textbf{Trust region update}:
\begin{equation*}
\begin{aligned}
\textbf{if} \qquad &\varrho_k < \eta_1 \qquad &\textbf{then} \qquad
&\Delta_{k+1} \in [\gamma_1\Delta_k, \gamma_2\Delta_k] \quad
&\textbf{end if}\\
\textbf{if} \qquad &\varrho_k \in [\eta_1, \eta_2)\qquad &\textbf{then}\qquad
 &\Delta_{k+1} \in [\gamma_2\Delta_k, \Delta_k] \qquad
 &\textbf{end if}\\
\textbf{if} \qquad &\varrho_k \geq \eta_2 \qquad &\textbf{then} \qquad
&\Delta_{k+1} \in [\Delta_k, \Delta_\text{max}] \qquad &\textbf{end if}\\
\end{aligned}
\end{equation*}
\end{algorithmic}
\end{algorithm}

The final error-aware trust-region method based on ROMs
for efficient topology optimization is summarized in
Algorithm~\ref{alg:etr:topopt}.
\begin{remark}
The ROM-based topology optimization method that uses traditional trust regions,
i.e., the trust-region constraint in
(\ref{eqn:trcon-topopt-dist}), is given by Algorithm~\ref{alg:etr:topopt}
with the definition of $\vartheta_k$ in Step~\ref{step:modelUpdate}
replaced with (\ref{eqn:trcon-topopt-dist}).
\end{remark}
\begin{remark}
Each iteration in Algorithm~\ref{alg:etr:topopt} involves a number of
primal and adjoint ROM solutions to solve the trust-region subproblem and
a single primal and adjoint HDM solution to evaluate the actual-to-predicted
reduction ratio (primal only) and update the reduced basis (both primal and
adjoint).
\end{remark}
The algorithm produces a sequence
of trust-region centers that converges (liminf sense) to a
critical point of (\ref{eqn:topopt0}); see Theorem~\ref{thm:globconv2}. 
\begin{lemma}
  \label{lem:assume_ver}
  Suppose the output functional $j : \Rbb^{N_\ubm} \times P \to \Rbb$ is twice continuously differentiable, and we choose $\vartheta_k(\psibold) \coloneqq\norm{\rbm(\ubm_{j_k}^\star(\psibold);\rhobold^\star(\psibold))}_2^{1-\epsilon}$ for $\epsilon \in (0,1)$. The error-aware trust-region method based on ROMs for topology optimization satisfy Assumptions~\ref{assume:con}--\ref{assume:trcon}.  For $\epsilon = 0$, all conditions except the fourth condition of Assumption~\ref{assume:trcon} are satisfied.
  \begin{proof}
    See Lemmas~\ref{lem:assume:con}--\ref{lem:assume:trcon} in~\ref{sec:assume_proofs}.
  \end{proof}
\end{lemma}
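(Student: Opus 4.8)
The plan is to verify Assumptions~\ref{assume:con}--\ref{assume:trcon} one at a time for the concrete choices $F=J$ from \eqref{eqn:obj-topopt}, $\Ccal$ from \eqref{eqn:feaset-topopt}, $m_k=J_{j_k}$ from \eqref{eqn:model-topopt}, and $\vartheta_k=\norm{\rbm(\ubm_{j_k}^\star(\psibold);\rhobold^\star(\psibold))}_2^{1-\epsilon}$. Almost every condition reduces to three structural facts I would establish first: (a) the filter $\psibold\mapsto\rhobold^\star(\psibold)$ is \emph{affine}, since $\phibold^\star$ solves the $\psibold$-independent system in \eqref{eqn:helm-res1} with right-hand side linear in $\psibold$ and $\rhobold^\star$ is a fixed linear average of $\phibold^\star$; (b) the sole nonlinearity of the state map is the polynomial $\alpha(\rho)=\rho_l+(1-\rho_l)\rho^p$, which is $C^\infty$ with derivatives bounded on the compact density range $[\rho_l,1]$; and (c) $\Ccal$ is compact, being closed and contained in $[0,1]^{N_e}$. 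The two genuinely substantive conditions are the basis-independent model-Hessian bound (Assumption~\ref{assume:model}.\ref{assume:model:hess}) and the error exponent (Assumption~\ref{assume:trcon}.\ref{assume:trcon:err}); I treat these last.

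For Assumption~\ref{assume:con}, every defining inequality---$\psi_e\ge0$, $1-\psi_e\ge0$, and $V-\sum_e\psi_e|\Omega_e|\ge0$---is affine, so each $c_i$ is $C^2$ (condition~\ref{assume:con:c2}); $\Ccal$ is a nonempty ($\zerobold\in\Ccal$ when $V\ge0$), closed, convex polytope (condition~\ref{assume:con:cvx}); and a linear constraint qualification holds at every feasible point, hence at any critical point (condition~\ref{assume:con:qual}). For Assumption~\ref{assume:obj}, $C^2$-smoothness of $J$ (condition~\ref{assume:obj:c2}) follows by composing the affine filter, the implicit map $\ubm^\star(\psibold)=\Kbm(\rhobold^\star(\psibold))^{-1}\fbm$ (smooth since $\Kbm$ is symmetric positive definite and depends on $\psibold$ only polynomially through $\alpha$), and the $C^2$ output $j$; boundedness below (condition~\ref{assume:obj:bnd}) and boundedness of $\norm{\nabla^2 J}_2$ (condition~\ref{assume:obj:hess}) are then immediate from continuity on the compact set $\Ccal$.

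For Assumption~\ref{assume:model}, $C^2$-smoothness of $m_k$ (condition~\ref{assume:model:c2}) is identical to that of $J$ with $\ubm^\star$ replaced by the reduced solve $\ubm_{j_k}^\star=\Phibold_{j_k}\hat\Kbm_{j_k}(\rhobold^\star)^{-1}\hat\fbm_{j_k}$, smooth because $\hat\Kbm_{j_k}$ is symmetric positive definite (Remark~\ref{rem:rom_wellposed}). Value and gradient matching (conditions~\ref{assume:model:val}--\ref{assume:model:grad}) is the argument already sketched before the lemma: the basis \eqref{eqn:rob-optim} contains $\ubm^\star(\psibold^{(k)})$ and $\lambdabold^\star(\psibold^{(k)})$, so Remarks~\ref{rem:rom_exact} and~\ref{rem:rom_adjoint} make the ROM primal and adjoint exact at $\psibold^{(k)}$, whence the vanishing residuals force equality in Theorems~\ref{thm:rom_output_error} and~\ref{thm:rom_sensitivity_error}. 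The crux is the bound on $\beta_k$ in \eqref{eq:assume4_betak} \emph{independent of $k$}, even though $\dim\Phibold_{j_k}$ grows. Here I would use that $\Phibold_{j_k}$ is orthonormal, so $\norm{\Phibold_{j_k}}_2=1$ and $\norm{\Phibold_{j_k}^T\Mbm\Phibold_{j_k}}_2\le\norm{\Mbm}_2$, together with the coercivity $\sigma_{\mathrm{min}}(\hat\Kbm_{j_k}(\rhobold))\ge\sigma_{\mathrm{min}}(\Kbm(\rhobold))$, which is bounded below uniformly on $\Ccal$ because $\alpha(\rho)\ge\rho_l>0$. These bound $\norm{\hat\Kbm_{j_k}^{-1}}_2$ and $\norm{\hat\ubm_{j_k}^\star}_2=\norm{\hat\Kbm_{j_k}^{-1}\Phibold_{j_k}^T\fbm}_2$ uniformly; since each $\psibold$-derivative of $\hat\Kbm_{j_k}=\Phibold_{j_k}^T\Kbm\Phibold_{j_k}$ is dominated by the matching derivative of $\Kbm$ (bounded via $\alpha',\alpha''$ and the affine filter), differentiating $\hat\ubm_{j_k}^\star$ twice and reconstructing $\ubm_{j_k}^\star$ yields $\norm{\nabla^2 m_k}_2\le\beta$ with $\beta$ depending only on $j$, $\alpha$, the filter, and $\sigma_{\mathrm{min}}(\Kbm)$, not on $k$.

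Finally, for Assumption~\ref{assume:trcon}: $\vartheta_k(\psibold^{(k)})=0$ (condition~\ref{assume:trcon:val}) because $\rbm=\zerobold$ at the exact center, while $C^2$-smoothness and the gradient bound (conditions~\ref{assume:trcon:c2}--\ref{assume:trcon:grad}) follow from smoothness of the residual map composed with the norm, the only delicate point being the center where $\rbm$ vanishes. The decisive condition is Assumption~\ref{assume:trcon}.\ref{assume:trcon:err}: by Theorem~\ref{thm:rom_output_error}, and using that the same orthonormality-plus-coercivity bounds make $\norm{\rbm^{\mathrm{adj}}}_2$ and $\sigma_{\mathrm{max}}(\Bbm)$ from \eqref{eq:rom_Bbm} bounded on $\Ccal$ uniformly in $k$, one obtains $|J-J_k|\le C_1\norm{\rbm}_2\norm{\rbm^{\mathrm{adj}}}_2+C_2\norm{\rbm}_2^2\le C\norm{\rbm}_2$; since $\norm{\rbm}_2=\vartheta_k^{1/(1-\epsilon)}$, this reads $|F-m_k|\le C\,\vartheta_k^{\nu}$ with $\nu=1/(1-\epsilon)>1$ for every $\epsilon\in(0,1)$, establishing condition~\ref{assume:trcon:err}. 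For $\epsilon=0$ the identical estimate gives only $\nu=1$, violating the requirement $\nu>1$, so condition~\ref{assume:trcon:err} is the lone condition that fails---exactly the exceptional case asserted. I expect the two hardest points to be the basis-independent Hessian bound of Assumption~\ref{assume:model}.\ref{assume:model:hess} and, for general outputs, the fact that only the primal residual enters $\vartheta_k$ while the output error retains an adjoint-residual factor; it is precisely this factor that degrades the exponent to $\nu=1$ at $\epsilon=0$ and necessitates the $\epsilon$-correction.
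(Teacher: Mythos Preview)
Your proposal is correct and follows essentially the same route as the paper's proof (split across Lemmas~\ref{lem:assume:con}--\ref{lem:assume:trcon}): verify each assumption via the affineness of the filter, the polynomial dependence of $\Kbm$ on $\rhobold$, the compactness of $\Ccal$, and---for the $k$-independent Hessian bound---the orthonormality of $\Phibold_{j_k}$ together with $\sigma_{\mathrm{min}}(\hat\Kbm_{j_k})\ge\sigma_{\mathrm{min}}(\Kbm)$. The one tangible difference is in Assumption~\ref{assume:trcon}.\ref{assume:trcon:err}: the paper invokes the looser Theorem~\ref{thm:rom_output_error_simple}, which bounds $|J-J_k|$ by $\norm{\lambdabold^\star}_2\norm{\rbm}_2+\sigma_{\mathrm{max}}(\Bbm)\norm{\rbm}_2^2$ and so has a leading constant $\norm{\lambdabold^\star}_2$ that is automatically $k$-independent, whereas you invoke Theorem~\ref{thm:rom_output_error} and must additionally argue that $\norm{\rbm^{\mathrm{adj}}(\lambdabold_{j_k}^\star)}_2$ is bounded uniformly in $k$. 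Both routes yield the same exponent $\nu=1/(1-\epsilon)$; the paper's is marginally cleaner, yours gives a sharper constant.
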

\begin{remark}
  In practice we set $\epsilon = 0$ in Lemma~\ref{lem:assume_ver}.  As a result, the fourth condition of Assumption~\ref{assume:trcon} is not satisfied.
\end{remark}

\begin{theorem} \label{thm:globconv2}
Consider the optimization problem in \eqref{eqn:optim-gen} with the
objective function \eqref{eqn:obj-topopt} where
$\func{j}{\Rbb^{N_\ubm} \times P}{\Rbb}$ is twice continuously
differentiable with respect to both of its arguments
and the feasible set is defined in (\ref{eqn:feaset-topopt}) with
$V\geq\sum_{e=1}^{N_e} \rho_l|\Omega_e|$. Then the sequence
of trust-region centers produced by Algorithm~\ref{alg:etr:topopt} satisfies
\begin{equation}
 \liminf_{k\rightarrow\infty} \chi(\psibold^{(k)}) = 0
\end{equation}
regardless of $n_k$.
\begin{proof}
  The result is a direct consequence of Lemma~\ref{lem:assume_ver} and Theorem~\ref{thm:globconv}.
\end{proof}
\end{theorem}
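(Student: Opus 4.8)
The plan is to recognize that Theorem~\ref{thm:globconv2} is nothing more than the specialization of the abstract convergence result Theorem~\ref{thm:globconv} to the concrete topology optimization setting, so the entire task reduces to checking that the objective, feasible set, approximation model, and trust-region constraint generated by Algorithm~\ref{alg:etr:topopt} satisfy Assumptions~\ref{assume:con}--\ref{assume:trcon}; this verification is precisely the content of Lemma~\ref{lem:assume_ver}. First I would identify the abstract data: set $\xbm \coloneqq \psibold$, $N \coloneqq N_e$, take $F \coloneqq J$ from~\eqref{eqn:obj-topopt}, let $\Ccal$ be the feasible set~\eqref{eqn:feaset-topopt}, and let $m_k \coloneqq J_{j_k}$ and $\vartheta_k$ be the approximation model and residual-based constraint defined in Step~\ref{step:modelUpdate}.

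Then I would check the assumptions group by group. For the constraints, $\Ccal$ is the intersection of the box $[0,1]^{N_e}$ with a single linear volume inequality; all defining functions are affine and hence $C^2$, giving Assumption~\ref{assume:con}.\ref{assume:con:c2}. Nonemptiness follows from $V \geq \sum_e \rho_l |\Omega_e|$, and $\Ccal$ is a closed, bounded, convex polytope, yielding Assumption~\ref{assume:con}.\ref{assume:con:cvx}; linearity of every constraint supplies a constraint qualification at each point, so Assumption~\ref{assume:con}.\ref{assume:con:qual} holds. For the objective, since $\Hbm$ is fixed and SPD the map $\psibold \mapsto \phibold^\star$ is affine, the nodal-to-element averaging is linear, and $\rho_e^\star \in [\rho_l,1]$ is bounded away from zero so $\Kbm(\rhobold^\star(\psibold))$ is uniformly SPD and $\rhobold \mapsto \ubm^\star$ is smooth; composing with a $C^2$ output $j$ shows $J$ is $C^2$, and continuity on the compact set $\Ccal$ delivers the lower bound and uniform Hessian bound of Assumption~\ref{assume:obj}.

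For the model, the same reasoning applied to the reduced stiffness matrix $\hat\Kbm_{j_k}(\rhobold)$, which is SPD by Remark~\ref{rem:rom_wellposed}, shows $m_k$ is $C^2$ with uniformly bounded Hessian on the compact $\Ccal$. The value- and gradient-matching conditions are the crux of the basis construction: the reduced basis~\eqref{eqn:rob-optim} contains both $\ubm^\star(\psibold^{(k)})$ and $\lambdabold^\star(\psibold^{(k)})$ in its span for every admissible $n_k$, so by Remarks~\ref{rem:rom_exact} and~\ref{rem:rom_adjoint} the ROM reproduces the primal and adjoint exactly at the center; consequently the error estimates of Theorems~\ref{thm:rom_output_error} and~\ref{thm:rom_sensitivity_error} vanish there, and Assumptions~\ref{assume:model}.\ref{assume:model:val}--\ref{assume:model}.\ref{assume:model:grad} hold \emph{regardless of $n_k$}, which is exactly the clause appended to the statement.

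The delicate point, and the main obstacle, is the error-bound condition Assumption~\ref{assume:trcon}.\ref{assume:trcon:err}, which demands a strict exponent $\nu>1$. The smoothness, uniform gradient bound, and centering $\vartheta_k(\psibold^{(k)})=0$ (since the ROM residual vanishes where the ROM is exact) are routine for the residual constraint. However, the looser output estimate Theorem~\ref{thm:rom_output_error_simple} contains a term linear in $\|\rbm\|_2$ with coefficient $\|\lambdabold^\star\|_2$, so the natural choice $\vartheta_k = \|\rbm\|_2$ gives only $\nu = 1$; to recover a strict exponent one instead takes $\vartheta_k = \|\rbm\|_2^{1-\epsilon}$ with $\epsilon \in (0,1)$, so that $\|\rbm\|_2 = \vartheta_k^{1/(1-\epsilon)}$ and the dominant term scales as $\vartheta_k^{1/(1-\epsilon)}$ with $1/(1-\epsilon)>1$. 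This is exactly the device used in Lemma~\ref{lem:assume_ver}. With all of Assumptions~\ref{assume:con}--\ref{assume:trcon} thereby verified, Theorem~\ref{thm:globconv} applies verbatim to the sequence $\{\psibold^{(k)}\}$ produced by Algorithm~\ref{alg:etr:topopt} and yields $\liminf_{k\to\infty}\chi(\psibold^{(k)}) = 0$.
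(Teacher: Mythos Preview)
Your proposal is correct and follows essentially the same approach as the paper: the proof in the paper is literally the one-line observation that the result follows from Lemma~\ref{lem:assume_ver} (verification of Assumptions~\ref{assume:con}--\ref{assume:trcon}) together with the abstract convergence Theorem~\ref{thm:globconv}, and your write-up is a faithful expansion of exactly that verification, including the $\|\rbm\|_2^{1-\epsilon}$ device needed to obtain $\nu>1$ in Assumption~\ref{assume:trcon}.\ref{assume:trcon:err}.
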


To close, we discuss two pertinent details for the implementation of
Algorithm~\ref{alg:etr:topopt} in practice. First, the global convergence
theory is independent of $n_k$; however, the behavior of the algorithm
does depend on $n_k$. For example, if we take $n_k = 0$, each ROM solve
will be extremely fast (since $j_k = 2$ for all $k$), but the ROM will
have little predictive capability and a large number of trust-region
center updates will be required. On the other end of the spectrum, choosing
$n_k=k-1$ will lead to expensive ROM solves when $k$ becomes large; however,
fewer HDM solves will be required because the ROM basis will be as rich as
possible. The approach we take is to choose
\begin{equation}
 n_k = \min\{k-1, n_\text{max}\},
\end{equation}
where $n_\text{max}$ is the maximum truncation size for the primal/adjoint
reduced-order bases, which guarantees the ROM size will be bounded
$j_k \leq 2 (n_\text{max}+1)$. Thus $n_\text{max}$ should be chosen as
large as possible such that individual ROM solves are sufficiently fast.
\begin{remark}
In practice, we only retain the $L$ most recent
trust-region centers in $\Ubm_{k-1}$ and $\Lambdabold_{k-1}$
and require $n_\text{max}\leq L$, rather than
all previous centers. This ensures that POD is applied to at most $L$ snapshots and hence controls the POD cost. Furthermore, since the matrix
$\Ubm_{k-1}$ (resp. $\Lambdabold_{k-1}$) is a low-rank update to
$\Ubm_{k-2}$ (resp. $\Lambdabold_{k-2}$), efficient algorithms for
updating the singular value decomposition
\cite{brand_fast_2006, choi_accelerating_2019},
adapted to the truncated singular value decomposition (POD) in
\cite{amsallem_fast_2015, washabaugh_use_2016, oxberry_limited-memory_2017},
can be used to ensure the POD cost is negligible in comparison to a HDM solve.
In the case where $k<L$, the algorithm in
\cite{amsallem_fast_2015, washabaugh_use_2016,
      oxberry_limited-memory_2017, choi_accelerating_2019}
is used to update the POD of $\Ubm_{k-2}$ with the new snapshot to obtain
the POD of $\Ubm_{k-1}$ (rank-one update), while in the case where $k=L$,
the POD update also removes the oldest snapshot (rank-two update).
\end{remark}

The other practical issue that must be addressed is how to efficiently
solve the trust-region subproblem to ensure the fraction of Cauchy decrease condition
holds. Given Theorem~\ref{thm:fcd} that establishes the existence of a point in
the trust region that satisfies (\ref{eqn:fcd}), the (global) minimizer
of (\ref{eqn:trsub}) is guaranteed to satisfy (\ref{eqn:fcd}).
Recall from Figure~\ref{fig:rom_cost} that ROM evaluations are
$\approx 100\times$ cheaper than
HDM evaluations; however, they are not \textit{free} so trust-region
subproblem solvers that require hundreds or thousands of ROM evaluations
will not be competitive. This immediately eliminates the use of
``global'' optimization methods due to the large number of objective
evaluations required. In addition, our experiments using gradient-based
optimization methods to find a local solution to (\ref{eqn:trsub}) also
required several hundred iterations to even converge to relatively loose
optimality tolerances. The approach that proved most effective in practice
was to use a gradient-based optimization procedure (Method of Moving Asymptotes
\cite{svanberg_method_1987} in this work) to solve
\begin{equation*}
 \underset{\psibold\in\Ccal}{\min}~m_k(\psibold),
\end{equation*}
i.e., the trust-region subproblem \textit{without} the trust-region
constraint, and \textit{terminate} the iterations once the trust-region
constraint was violated $\vartheta_k(\psibold) > \Delta_k$.
While this is not guaranteed to produce a step that satisfies the
fraction of Cauchy decrease, it works well in practice as we show
in the next section.

We conclude the section with a remark on the extension of the proposed framework to other PDEs.
  \begin{remark}
    While the application focus of this work is density-based topology optimization governed by linear elasticity equations, the proposed ROM-based trust-region framework, at least at the abstract level, can be applied to other PDEs.  However, in general an extension would (i) require additional technical ingredients and/or (ii) result in the loss of certain theoretical properties.  For instance, consider nonlinear elasticity problems; the construction of an online-efficient ROM for nonlinear PDEs requires so-called hyperreduction by, for example, the empirical interpolation method~\cite{Barrault_2004_EIM}.  As another example, suppose the underlying HDM is not finite-element based; the algebraic formulation of projection-based ROMs in Section~\ref{sec:rom} would extend to other discretizations (e.g., finite difference), but both our density-based topology optimization formulation and the ROM error analysis build on the variational and finite-element formulation.  Finally, suppose the PDE of interest is non-symmetric and/or non-coercive.  In these problems, Galerkin ROMs lose the energy optimality property (Remark~\ref{rem:rom_opt}) and the formulation is not guaranteed to be well-posed (Remark~\ref{rem:rom_wellposed}); it can be advantageous to consider minimum-residual ROMs, whose formulation and analysis are provided in e.g.~\cite{Maday_2002_RB_Noncoercive}, for these classes of problems.
  \end{remark}  

\section{Numerical experiments}
\label{sec:numexp}
In this section, we study the performance of the proposed ROM-based
trust-region methods for topology optimization on three standard benchmark
problems: MBB beam, cantilever beam, and simply supported beam. We will consider two instances of the method,
one where the trust region is taken as the sublevel sets of the HDM
residual \eqref{eqn:trcon-topopt}, subsequently called ROM-TR-RES,
and one with traditional trust regions
$\vartheta_k(\psibold)=\norm{\psibold-\psibold^{(k)}}_2$
\cite{conn_trust-region_2000}, 
to be called ROM-TR-DIST;
both instances use the same trust-region model $m_k(\psibold)$
based on the reduced-order model. Given its popularity, we use MMA
applied to the original (unreduced) topology optimization, referred to as
HDM-MMA, as the standard for comparison. In the remainder of this
section, we use the terminology \textit{major iteration}, to refer
to an update from $\psibold^{(k)}$ to $\psibold^{(k+1)}$,
including rejected steps where $\psibold^{(k+1)}=\psibold^{(k)}$.
In all methods considered, a major iteration involves a single HDM evaluation;
the ROM-based methods additionally require a number of ROM evaluations.

We use the computational cost required to drive the objective function
to within a prescribed tolerance of its optimal value to quantify the
performance of the ROM-based optimization algorithms. 
For all benchmark problems, we take
the optimal value of the objective function, denoted $J^*$, to be the $2000$th
iteration of the HDM-MMA algorithm; the algorithm is cutoff at the smallest
$n\in\Nbb$ such that
\begin{equation}
 |J(\psibold^{(n)}) - J^*| < \epsilon|J^*|,
\end{equation}
where $\epsilon > 0$ is the (relative) cutoff tolerance. The
computational cost required to converge to a tolerance of $\epsilon$,
denoted $C_\epsilon$, will be measured in units of
\textit{equivalent HDM solves}, i.e.,
\begin{equation}
 C_\epsilon = \frac{T_\epsilon}{t_\text{HDM}},
\end{equation}
where $T_\epsilon$ is the time required for the method under consideration
to converge the topology optimization problem to the tolerance $\epsilon$,
and $t_\text{HDM}$ is the time required for a single HDM solve. Because
the dominant cost of topology optimization comes from the HDM and ROM
solves, $T_\epsilon$ can be expanded as
\begin{equation}
 T_\epsilon \approx
              N_\text{HDM}^\epsilon t_\text{HDM} +
              N_\text{ROM}^\epsilon t_\text{ROM},
\end{equation}
where $N_\text{HDM}^\epsilon$ and $N_\text{ROM}^\epsilon$ are the number of
HDM and ROM solves, respectively, required for the method to converge the
objective function to a tolerance $\epsilon$, and $t_\text{ROM}$ is the
average time required for a single ROM solve. From
Figure~\ref{fig:rom_cost},
$t_\text{ROM}$ depends on the size of the reduced basis. In our numerical
experiments, we fix the maximum basis size to $20$ ($n_\text{max}=19$) and
therefore use the conservative estimate of $t_\text{ROM} = \nu t_\text{HDM}$,
where $\nu = 0.01$ from Figure~\ref{fig:rom_cost}. Therefore, the computational
cost takes the form
\begin{equation} \label{eqn:cost}
 C_\epsilon = N_\text{HDM}^\epsilon + \nu N_\text{ROM}^\epsilon.
\end{equation}

There are a number of user-defined parameters required by the
ROM-based trust-region method (Algorithm~\ref{alg:etr:topopt}); however, most of these are
standard trust-region parameters, and we fix them at reasonable
values: $\gamma_1 = 0.5$, $\gamma_2 = 1$, $\eta_1 = 0.1$,
$\eta_2 = 0.75$, $\Delta_\text{max} = 100\Delta_0$. The remaining parameter,
$\Delta_0$, is more delicate and potentially problem-specific so we study
its impact on the ROM-based trust-region algorithms in the following sections.
To ensure $\Delta_0$
is appropriately scaled, we consider initial trust-region radii of the
form
\begin{equation} \label{eqn:trrad0}
 \Delta_0 = \tau \norm{\rbm_\ubm(\zerobold, \rhobold^\star(\psibold^{(0)}))}_2,
 \qquad
 \Delta_0 = \tau \norm{\psibold^{(0)}}_2,
\end{equation}
for the TR-ROM-RES and TR-ROM-DIST methods, respectively, where
$\psibold^{(0)}$ is the (feasible) initial design and $\tau\in\Rbb_{>0}$ 
controls the (scaled) initial radius.
\begin{remark}
We compare the proposed methods (ROM-TR-RES and ROM-TR-DIST) to MMA rather
than a method with the same algorithmic structure, e.g., a trust-region method
with a quadratic model problem constructed from the HDM, due to the popularity
of MMA in the topology optimization community. Furthermore, HDM-based
trust-region methods require the Hessian of the Lagrangian, which is
not practical to compute even in a medium-scale topology optimization
setting. For large-scale problems, we could use an approximate subproblem
solver, such as the Steihaug-Toint truncated conjugate gradient method
\cite{conn_trust-region_2000}, that only requires Hessian-vector products,
which can be approximated via first-order finite differences at the cost of
a single HDM adjoint solve per subproblem iteration; however, the HDM adjoint
solves render each subproblem solve relatively expensive. In our numerical
experiments, such methods were not competitive with MMA. We hence do not
include these methods in the comparisons that follow.
\end{remark}

\subsection{MBB beam}
\label{sec:numexp:mbb}
We begin with the most canonical topology optimization problem: the MBB beam
(Figure~\ref{fig:mbb0_setup}). We use a finite element mesh consisting of
$180\times 60$ bilinear quadrilateral elements, minimum filtering length scale of $R = 0.12$, and maximum volume $V = \frac{1}{2} |\Omega|$.
The optimal design is shown in Figure~\ref{fig:mbb0_setup}
($J^* = 19.96$).
For the studies in this section, all algorithms are initialized from the
same feasible design: $\psibold^{(0)}=0.5\cdot\onebold$.
\begin{figure}
\centering
\input{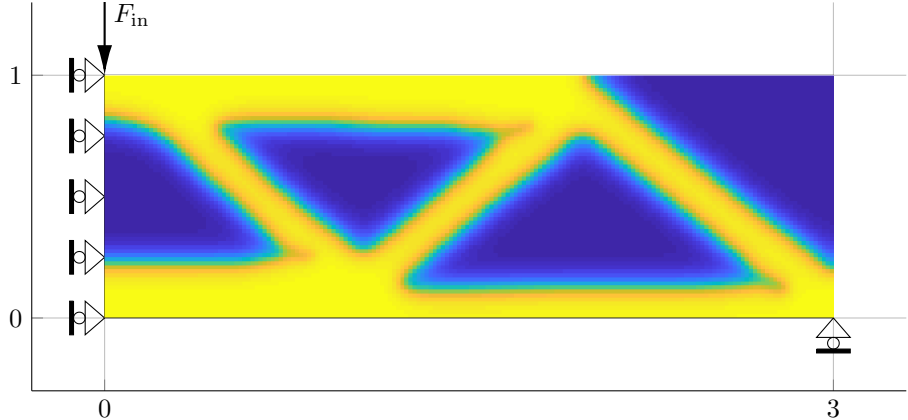}
\caption{Setup and optimal design of MBB beam. The point load
         $F_\text{in} = 0.3$ is implemented as a distributed load of magnitude
         $q_\text{in} = 1$ applied to a segment of length $0.3$.}
\label{fig:mbb0_setup}
\end{figure}

First, we demonstrate the significance of embedding the reduced
topology optimization problem in the adaptive trust-region framework. To this end, we compare two ROM-accelerated methods (with HDM-MMA as the benchmark): ROM-TR-RES, the residual-based trust-region method introduced in Section~\ref{sec:tr:etr-topopt}; ROM-FIX-RES, an alternative that uses a fixed residual-based ``trust-region'' radius and does not scrutinize the trust-region step, i.e., accepts any candidate point produced by the trust-region subproblem. 
ROM-TR-RES
($\tau=0.1$) requires fewer HDM solves to drive the objective function to a fixed tolerance of the optimal value and converges
to $\epsilon = 0.01$ in $23$ major iterations; HDM-MMA requires
$32$ iterations (Table~\ref{tab:mbb0_nel180x60_rmin0p12}). Furthermore,
ROM-TR-RES is rather insensitive to the initial trust-region radius; we
consider three orders of magnitude variation in $\tau \in \{0.01, 0.1, 1, 10\}$
and performance degradation only emerges for the largest values $\tau \in\{1, 10\}$.
The fastest convergence is obtained with $\tau = 0.1$. In contrast,
without the trust region adaptivity and candidate scrutiny,
ROM-FIX-RES convergence is usually slower than the HDM-MMA algorithm,
severely degrades as $\tau$ increases, and, in some cases, may not be achieved (Figure~\ref{fig:mbb0_nel180x60_rmin0p12_romset0_majit0}).
\begin{figure}
\centering
\input{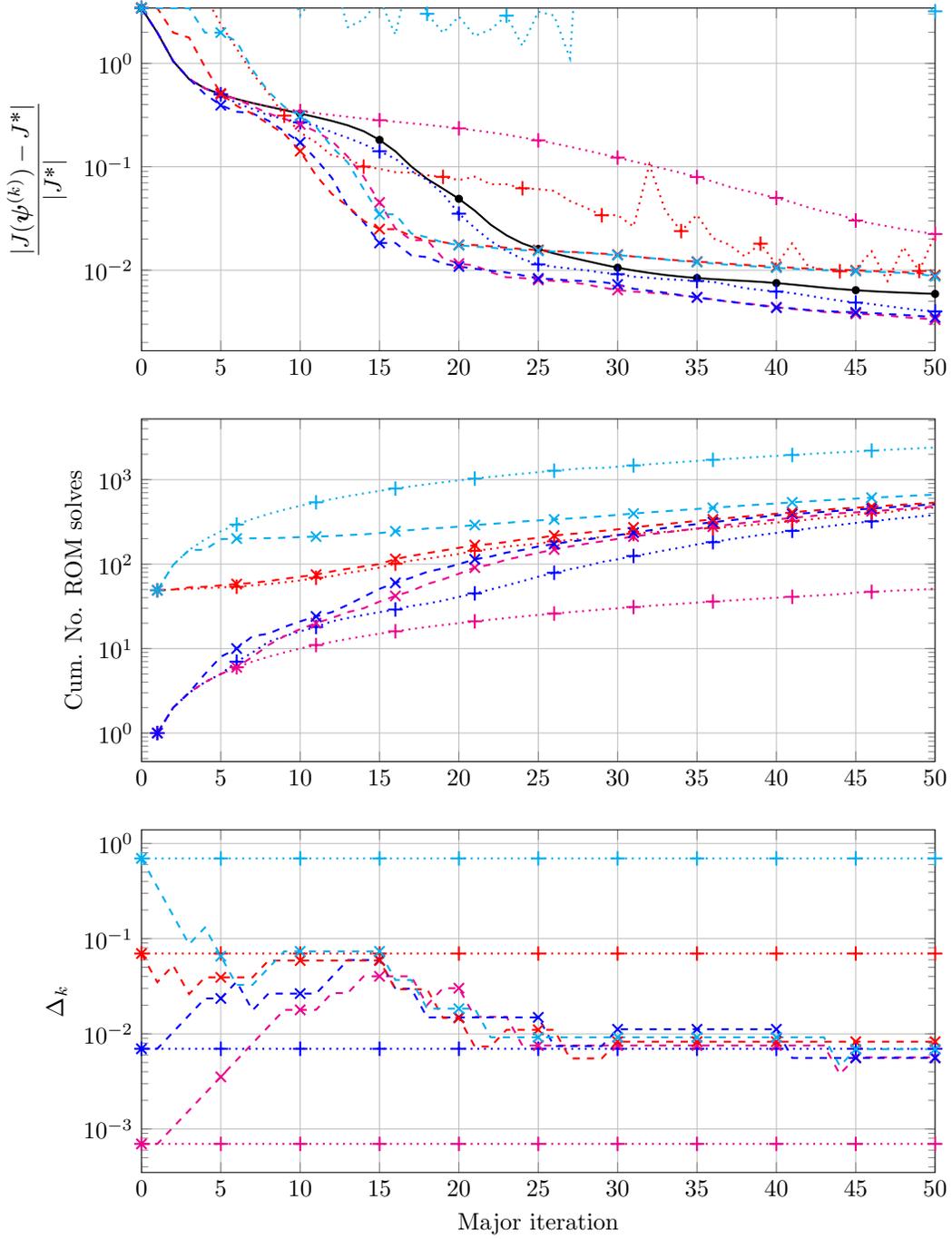}
\caption{Performance of
         HDM-MMA (\ref{line:mbb0_nel180x60_rmin0p12_romset0_majit0:hdm}) vs.
         ROM-TR-RES [$\tau = 0.01$: (\ref{line:mbb0_nel180x60_rmin0p12_romset0_majit0:romB0}), $\tau = 0.1$: (\ref{line:mbb0_nel180x60_rmin0p12_romset0_majit0:romB1}), $\tau = 1$: (\ref{line:mbb0_nel180x60_rmin0p12_romset0_majit0:romB2}), $\tau = 10$: (\ref{line:mbb0_nel180x60_rmin0p12_romset0_majit0:romB3})]
         vs.
         ROM-FIX-RES [$\tau = 0.01$: (\ref{line:mbb0_nel180x60_rmin0p12_romset0_majit0:romA0}), $\tau = 0.1$: (\ref{line:mbb0_nel180x60_rmin0p12_romset0_majit0:romA1}), $\tau = 1$: (\ref{line:mbb0_nel180x60_rmin0p12_romset0_majit0:romA2}), $\tau = 10$: (\ref{line:mbb0_nel180x60_rmin0p12_romset0_majit0:romA3})]
         applied to compliance minimization of the MBB beam: the convergence
         of the objective function to its optimal value (\textit{top row}),
         the cumulative number of ROM solves required (\textit{middle row}),
         and the evolution of the trust-region radius (\textit{bottom row}).}
\label{fig:mbb0_nel180x60_rmin0p12_romset0_majit0}
\end{figure}

To assess the overall cost $C_\epsilon$ of the ROM-based optimization methods,
we account for the cost of the ROM solves as well as the
HDM solves according to \eqref{eqn:cost}. We note only $\Ocal(10^2)$ ROM
evaluations are required to drive the objective function to within the relative error of $\epsilon = 0.01$ for reasonable values of $\tau \in \{0.01, 0.1\}$. As a result, the ROM evaluations only add an equivalent of $1$ to $2$ HDM evaluations, and the ROM-based trust-region methods reduces the computational cost by nearly $33\%$ relative HDM-MMA (Table~\ref{tab:mbb0_nel180x60_rmin0p12}). Finally, Figure~\ref{fig:mbb0_nel180x60_rmin0p12_romset0_majit0}
shows the evolution of the trust-region radius; for the non-adaptive
methods, the radius is fixed at its initial value, whereas the
trust-region method adapts it. It is interesting to note that, for
the adaptive method, the trust-region radius converges to nearly the
same value, regardless of its starting point.

\begin{remark}
We repeated this experiment for a number of mesh resolutions, both
coarser and finer than the $180\times 60$ quadrilateral mesh used
here, and did not observe meaningful differences in the relative
performance of the methods.  
\end{remark}

To highlight the progress of the ROM-based trust-region algorithm (ROM-TR-RES) relative
to the HDM-MMA algorithm over the first $20$ major iterations, we compare the evolution of the filtered density field $\rhobold$  in Figure~\ref{fig:mbb0_viz}. After only $20$ major
iterations, the ROM-based method has visually converged to the optimal solution,
whereas the HDM-MMA method still has volume to remove and boundaries to
sharpen. Furthermore, by comparing the two algorithms at iteration $12$,
it is clear the ROM algorithm makes substantial progress toward the optimal
design in the early iterations.
\begin{figure}
\centering
\includegraphics[width=0.45\textwidth]{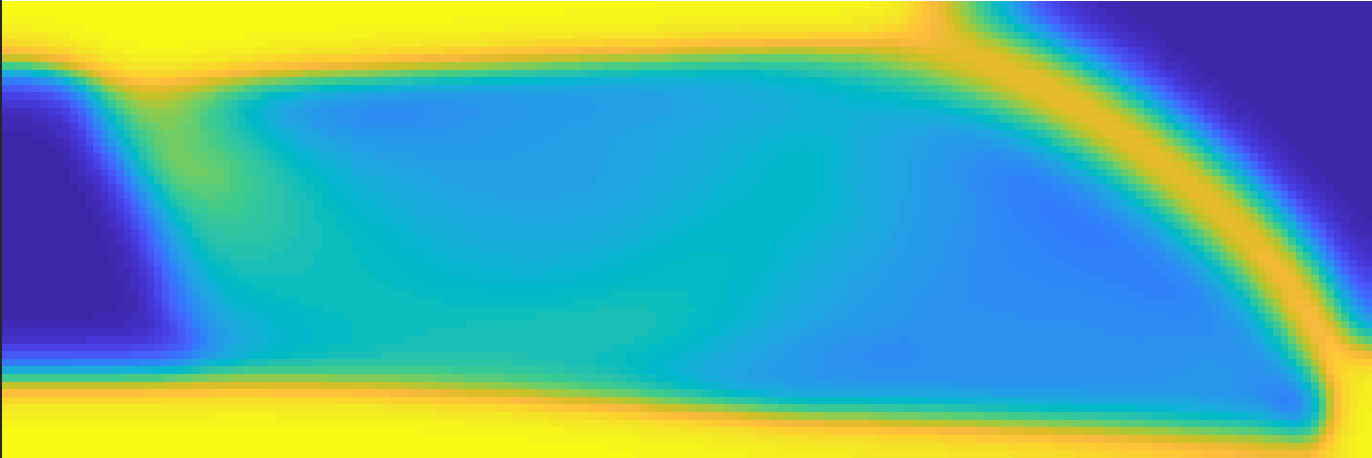} \quad
\includegraphics[width=0.45\textwidth]{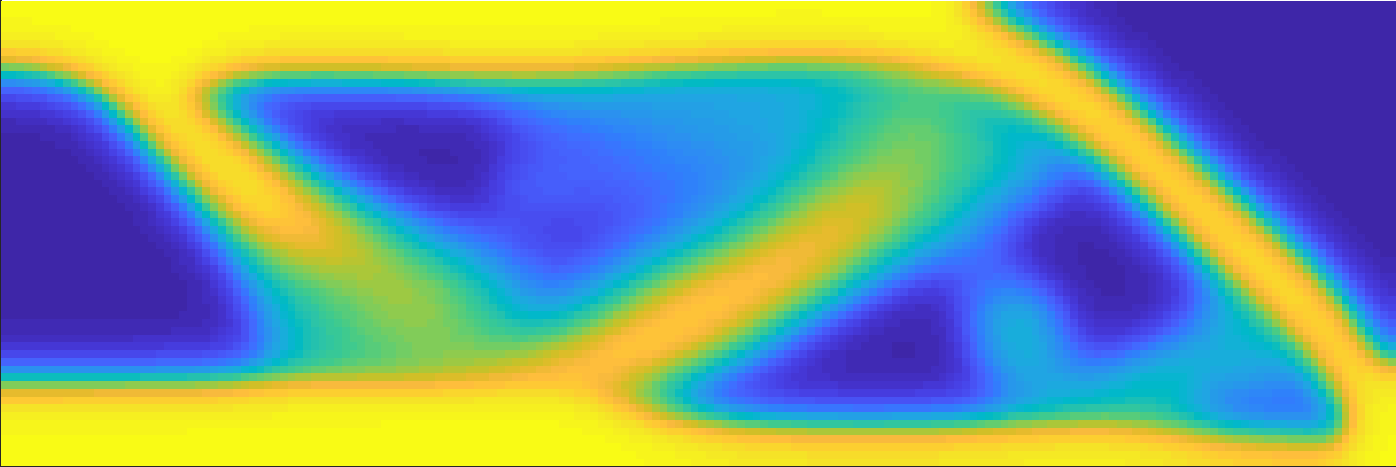} \\\vspace{2mm}
\includegraphics[width=0.45\textwidth]{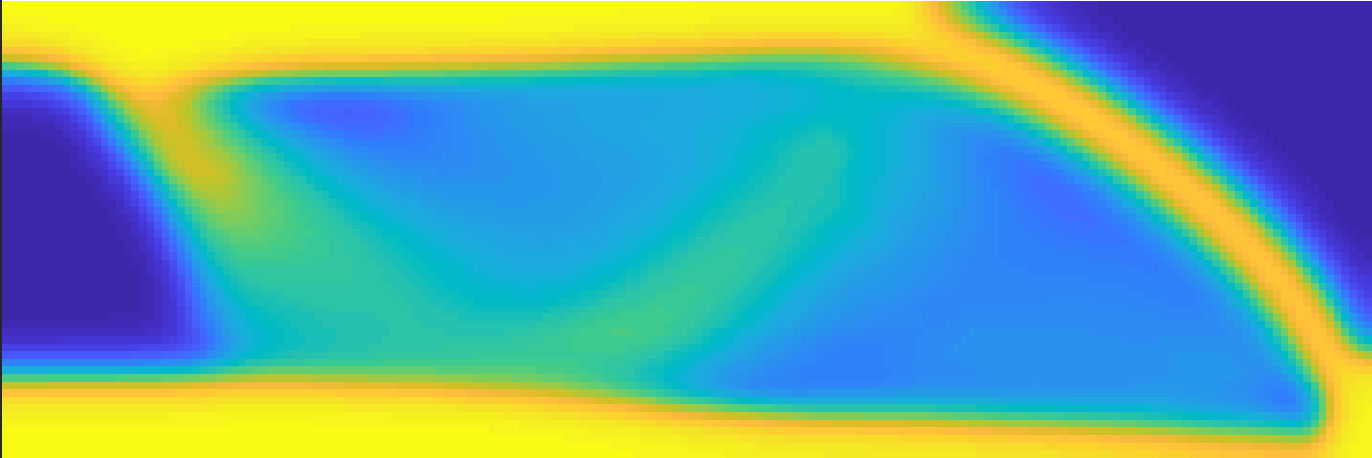} \quad
\includegraphics[width=0.45\textwidth]{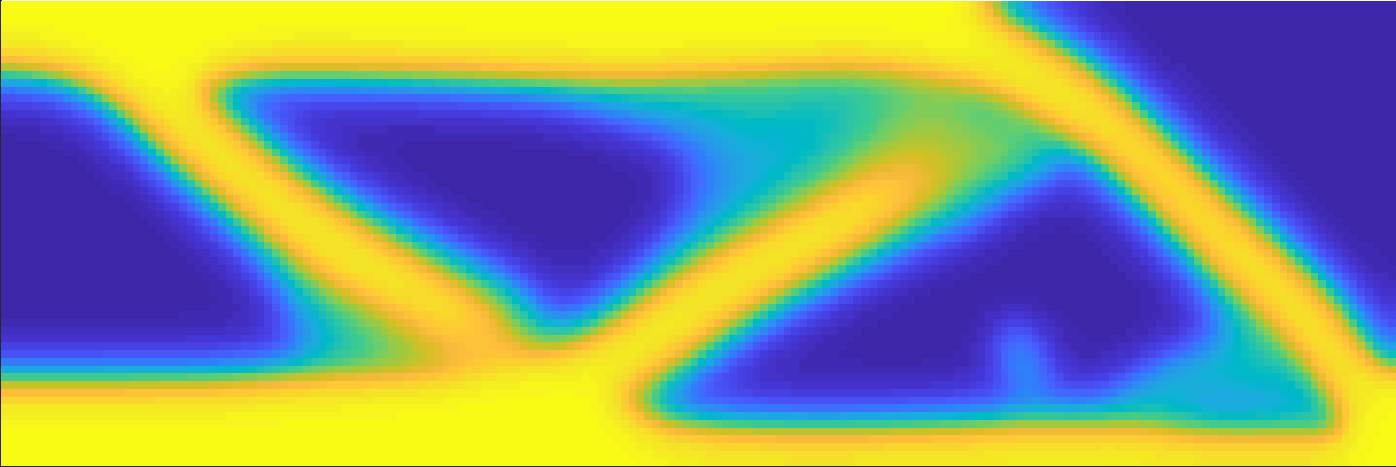} \\\vspace{2mm}
\includegraphics[width=0.45\textwidth]{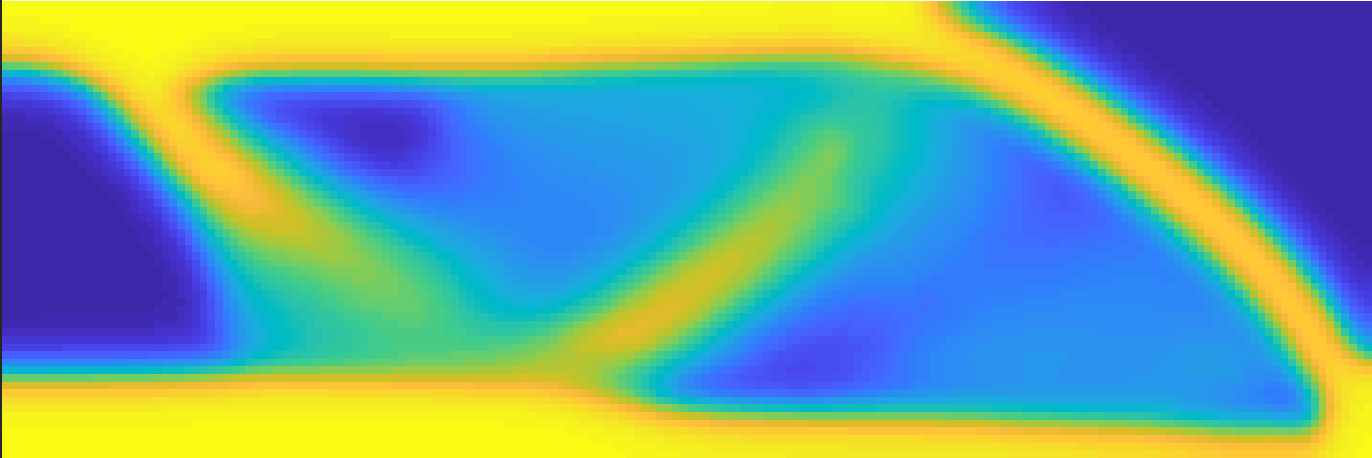} \quad
\includegraphics[width=0.45\textwidth]{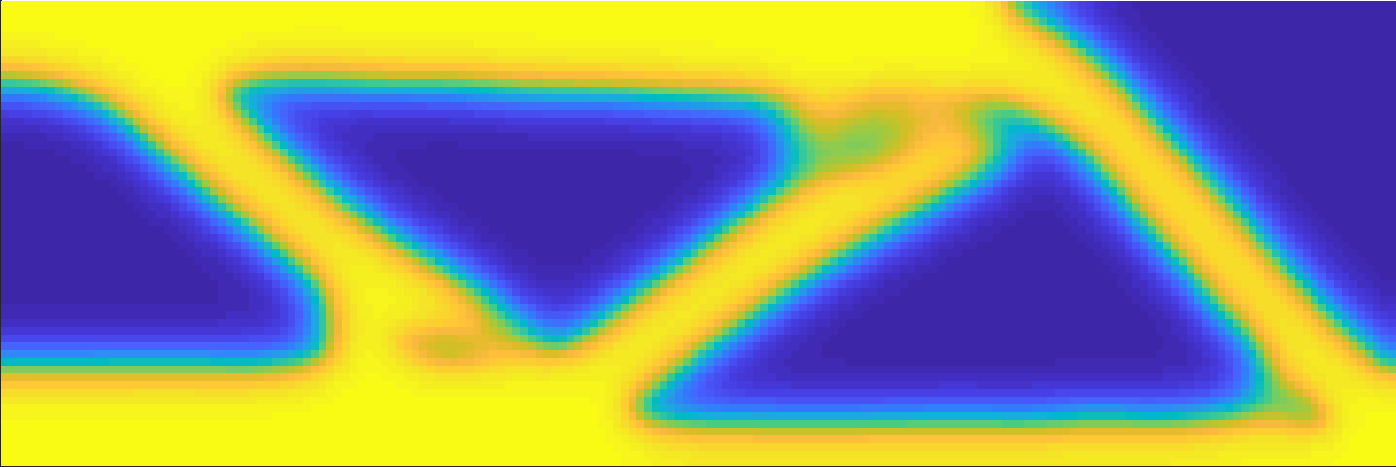} \\\vspace{2mm}
\includegraphics[width=0.45\textwidth]{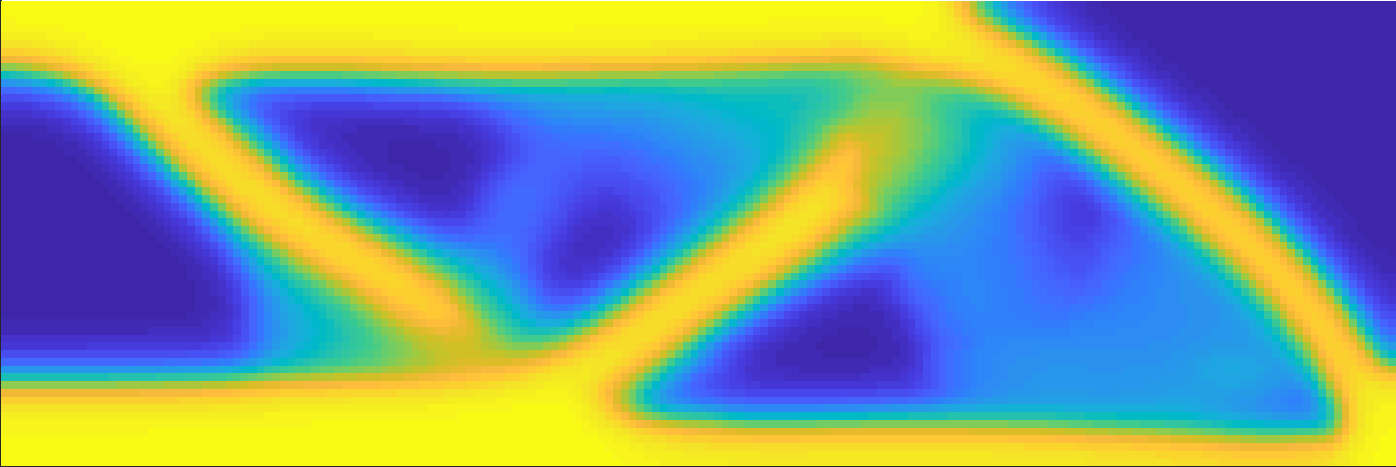} \quad
\includegraphics[width=0.45\textwidth]{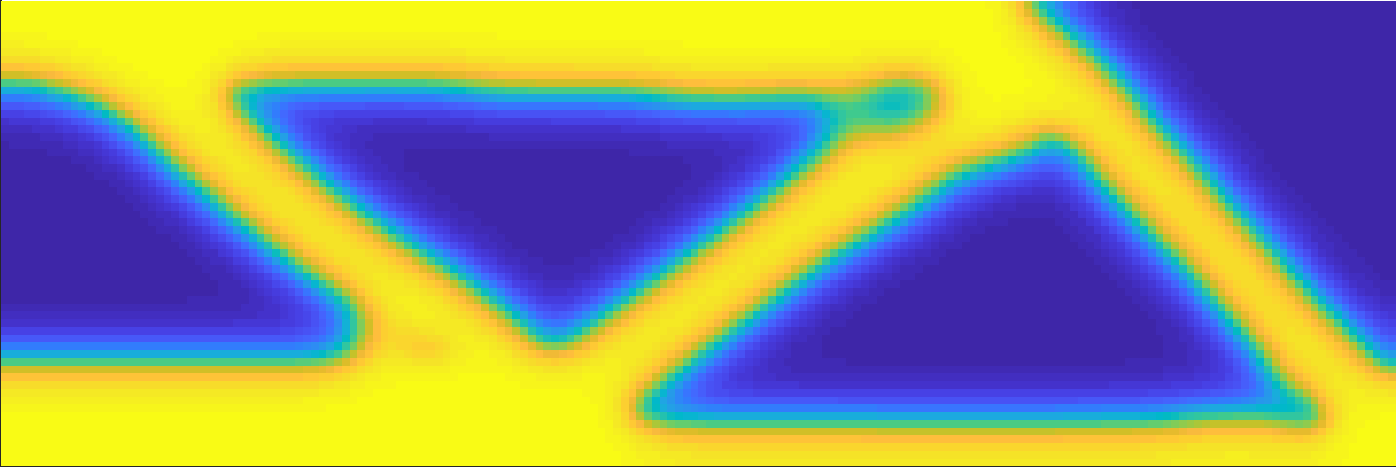} \\\vspace{2mm}
\includegraphics[width=0.45\textwidth]{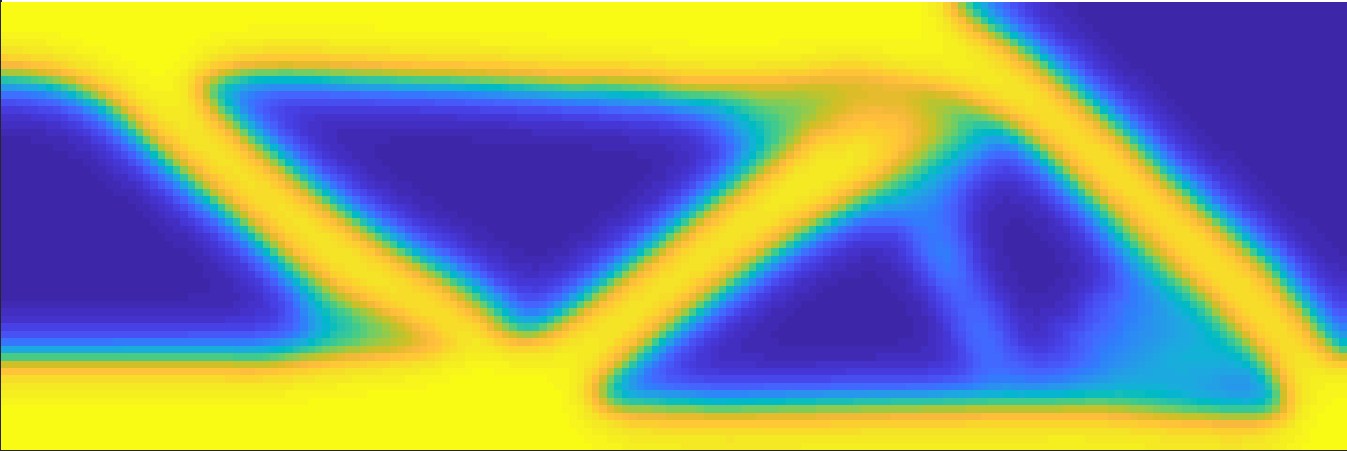} \quad
\includegraphics[width=0.45\textwidth]{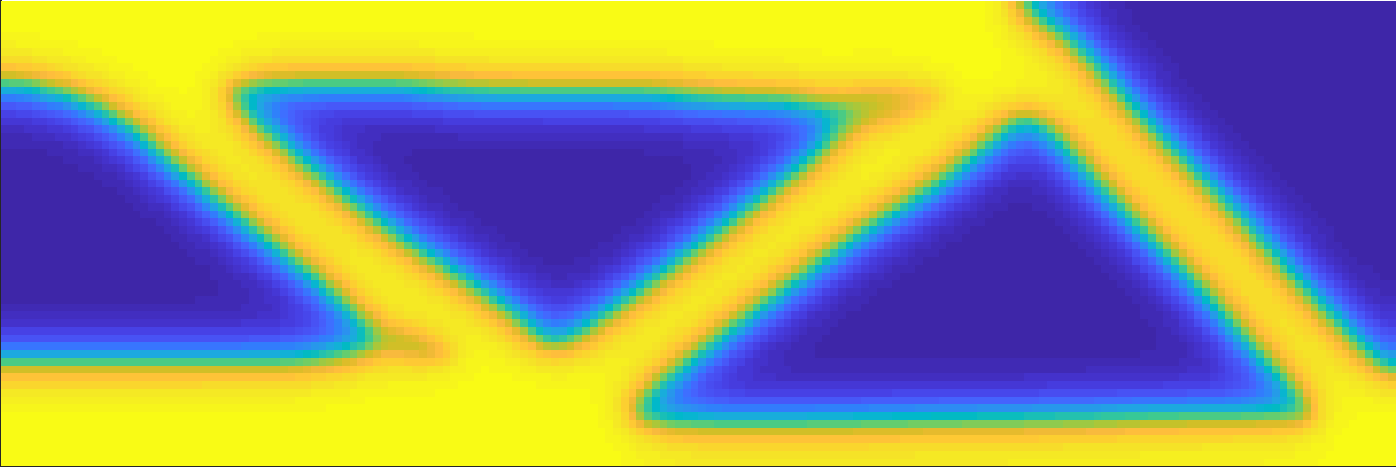}
\caption{The filtered density fields for the MBB beam obtained using the HDM-MMA
         (\textit{left}) and ROM-TR-RES (\textit{right}) methods at major
  iterations (\textit{top-to-bottom}): 10, 12, 14, 16, 20.}
\label{fig:mbb0_viz}
\end{figure}

We close this section with a comparison of the two ROM-based trust-region methods: ROM-TR-RES and ROM-TR-DIST. Both algorithms perform similarly and are
competitive relative to the HDM-MMA approach
(Figure~\ref{fig:mbb0_nel180x60_rmin0p12_romset1_majit0},
 Table~\ref{tab:mbb0_nel180x60_rmin0p12}) with the ROM-TR-DIST method
converging slightly faster than ROM-TR-RES. 
\begin{figure}
\centering
\input{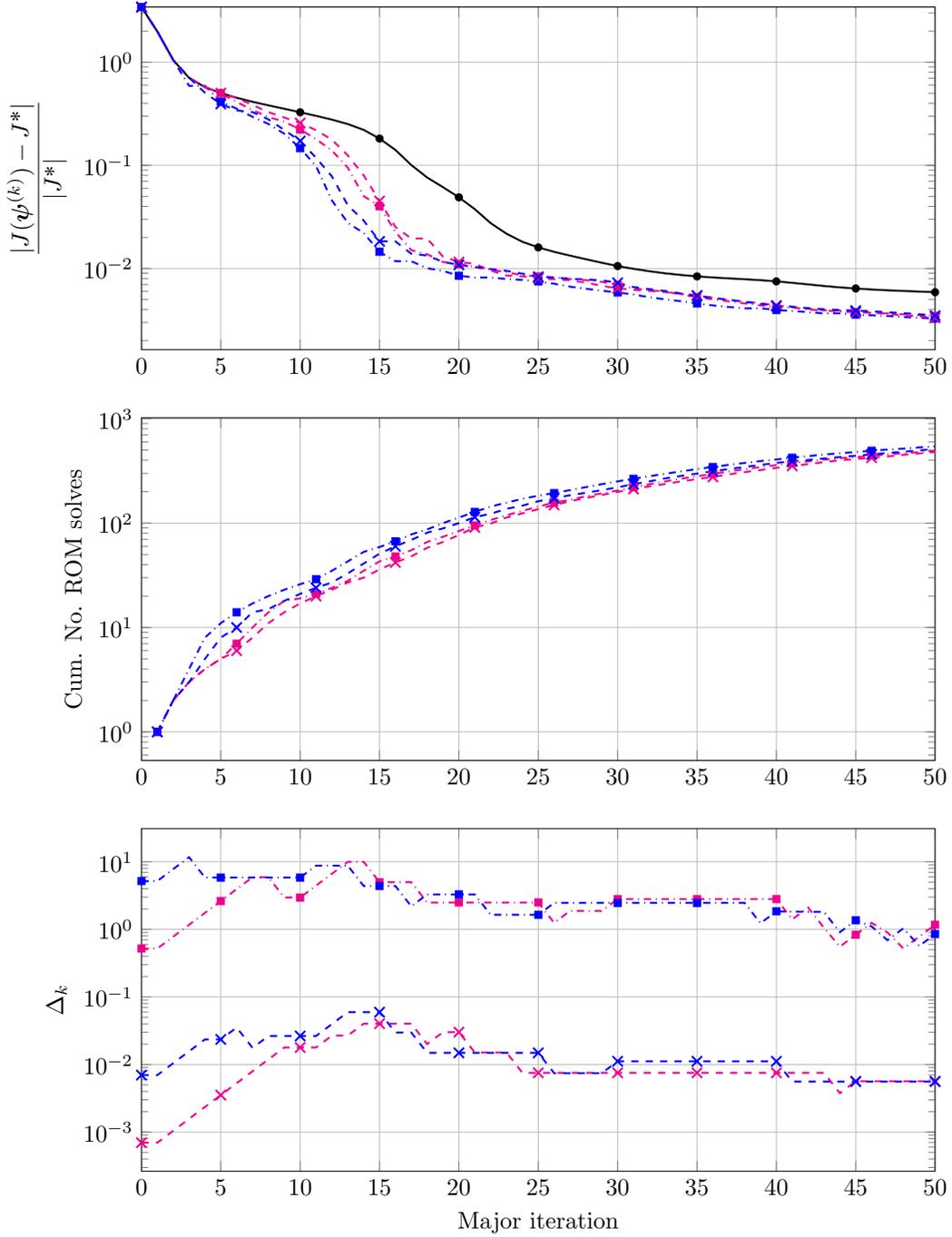}
\caption{Performance of
         HDM-MMA (\ref{line:mbb0_nel180x60_rmin0p12_romset1_majit0:hdm}) vs.
         ROM-TR-RES [$\tau = 0.01$: (\ref{line:mbb0_nel180x60_rmin0p12_romset1_majit0:romB0}), $\tau = 0.1$: (\ref{line:mbb0_nel180x60_rmin0p12_romset1_majit0:romB1})]
         vs.
         ROM-TR-DIST [$\tau = 0.01$: (\ref{line:mbb0_nel180x60_rmin0p12_romset1_majit0:romA0}), $\tau = 0.1$: (\ref{line:mbb0_nel180x60_rmin0p12_romset1_majit0:romA1})]
         applied to compliance minimization of the MBB beam: the convergence
         of the objective function to its optimal value (\textit{top row}),
         the cumulative number of ROM solves required (\textit{middle row}),
         and the evolution of the trust-region radius (\textit{bottom row}).}
\label{fig:mbb0_nel180x60_rmin0p12_romset1_majit0}
\end{figure}
\begin{remark}
We have extensively tested the ROM-based trust-region methods where the
trust-region subproblem is solved exactly using nonlinear programming methods
for this benchmark. We found a large number of ROM evaluations ($\Ocal(10^3)$) are required to solve each subproblem, at least for the suite of optimization methods tested. Therefore, these methods are not competitive in terms of the overall cost. See also the discussion at the end of Section~\ref{sec:tr:etr-topopt}.
\end{remark}
\begin{table}
\caption{Performance of HDM-MMA vs.\ ROM-TR-RES vs.\ ROM-TR-DIST applied to
         compliance minimization of the MBB beam as a function
         of convergence tolerance. The reference value of the optimal objective is $J^* = 19.96$.}
\label{tab:mbb0_nel180x60_rmin0p12}
\begin{tabular}{r|cc|c|ccc}
 ROM-TR-DIST& $\epsilon$ & $\tau$ & final objective & \# HDM solves & \# ROM solves & cost ($C_\epsilon$) \\\hline
 \input{py/mbb0_nel180x60_rmin0p12_majit0_matinterp1.dat}
\end{tabular}
\end{table}

\subsection{Cantilever beam}
Next, we consider compliance minimization of a cantilever beam
(Figure~\ref{fig:cbeam0_setup}) using a finite element mesh consisting
of $160\times100$ bilinear quadrilateral elements, minimum filtering length scale of $R = 2$, and maximum volume of
$V = \frac{1}{2} |\Omega|$.  The optimal design is shown in Figure~\ref{fig:cbeam0_setup} ($J^* = 394.71$).
All algorithms studied in this section are initialized from the same
feasible design: $\psibold^{(0)} = 0.5\cdot\onebold$.
\begin{figure}
\centering
\begin{tikzpicture}
\begin{axis}[
axis equal image,
axis x line*=bottom,
axis y line*=left,
width=0.8\textwidth,
xtick={0, 40},
ytick={0, 12.5, 25},
grid=both,
ymax=30,
xmax=45,
xmin=-5,
ymin=-5]
\addplot [black, solid, opacity=0.6, fill=lightgray, forget plot]
coordinates {
( 0.00000000e+00,  0.00000000e+00)
( 4.00000000e+01,  0.00000000e+00)
( 4.00000000e+01,  2.50000000e+01)
( 0.00000000e+00,  2.50000000e+01)
( 0.00000000e+00,  0.00000000e+00)};

\addplot [black, solid, fill=black, forget plot]
coordinates {
(-5.00000000e-01,  0.00000000e+00)
(-5.00000000e-01,  2.50000000e+01)
( 7.77156117e-16,  2.50000000e+01)
(-7.77156117e-16,  0.00000000e+00)
(-5.00000000e-01,  0.00000000e+00)};

\addplot []
graphics [xmin=0,xmax=40,ymin=0,ymax=25] { 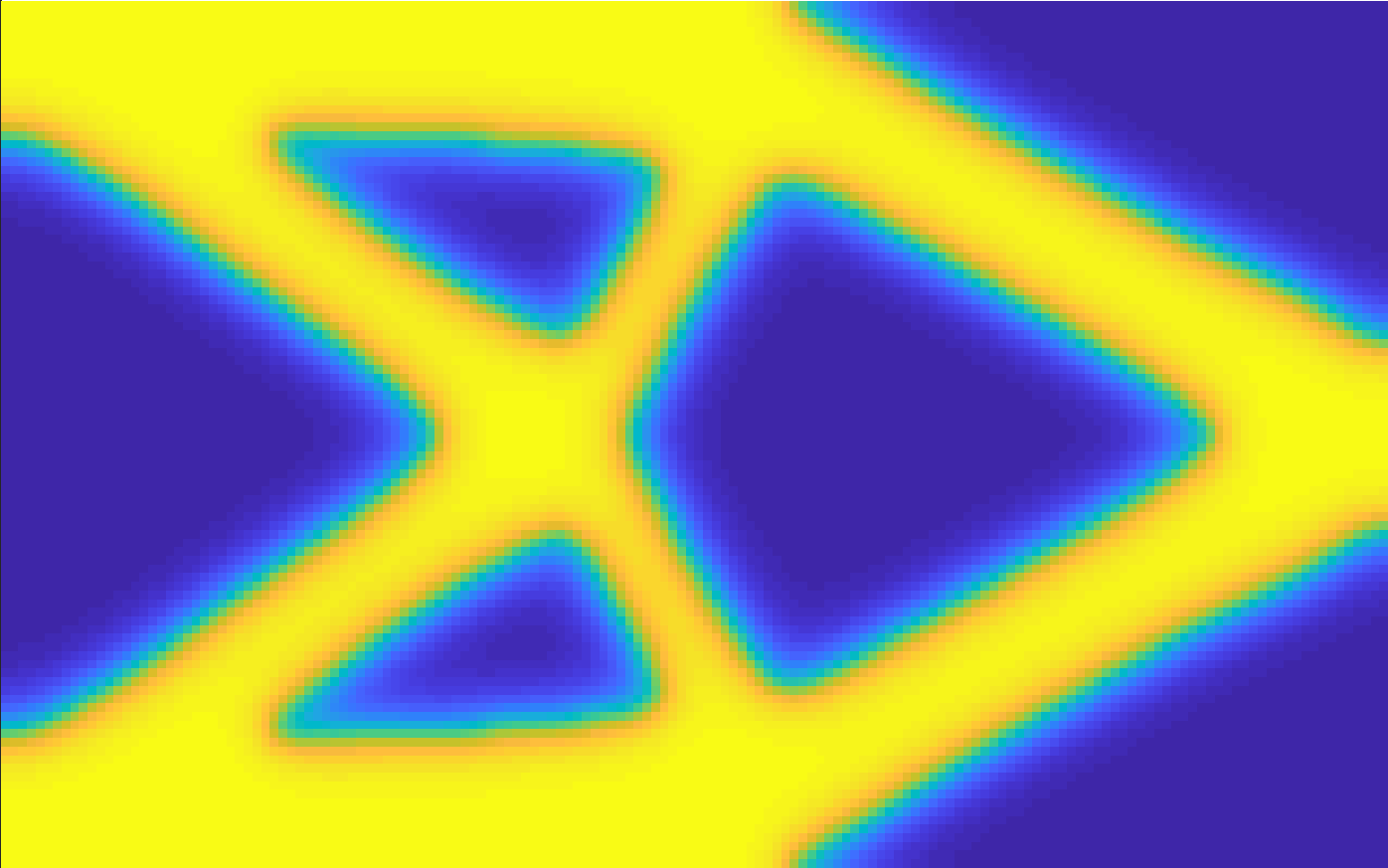};

\draw [-{Latex[width=2mm,length=4mm]}, thick] plot [] coordinates {(axis cs:40.0, 12.5) (axis cs:40.0, 6.5)
};

\node[right]    at    (axis cs:40, 9.5) {$F_\text{in}$};
\end{axis}
\end{tikzpicture}
\caption{Setup and optimal design of cantilever beam. The point load
         $F_\text{in} = 3$ is implemented as a distributed load of magnitude
         $q_\text{in} = 1$ applied to a segment of length $3$.}
\label{fig:cbeam0_setup}
\end{figure}

Having established the superiority of the (adaptive) trust-region methods
over the fixed-radius methods for the MBB beam in Section~\ref{sec:numexp:mbb},
we consider only the (adaptive) trust-region methods for the cantilever beam (Figure~\ref{fig:cbeam0_nel160x100_rmin2_romset1_majit0},
 Table~\ref{tab:cbeam0_nel160x100_rmin2}).
The HDM-MMA algorithm requires $27$ iterations to converge to a
relative objective function tolerance of $\epsilon = 0.01$, and significantly more iterations
to converge to tighter tolerances; e.g., $468$ iterations for $\epsilon=0.001$.
In contrast, ROM-TR-DIST with an initial trust-region radius multiplier of $\tau = 0.1$ requires only $12$ HDM evaluations and $62$ ROM
evaluations to converge to $\epsilon = 0.01$. As the cost for all ROM evaluations is less than a single HDM evaluation, the overall cost is reduced by more than a factor of two. Even
larger reductions are observed for tighter tolerances; e.g., to achieve $\epsilon = 0.001$, ROM-TR-DIST requires $58$ HDM evaluations and $934$ ROM evaluations (cost of $9-10$ HDM evaluations) and reduces the overall computational cost by a factor of $\approx 7$.
\begin{figure}
\centering
\input{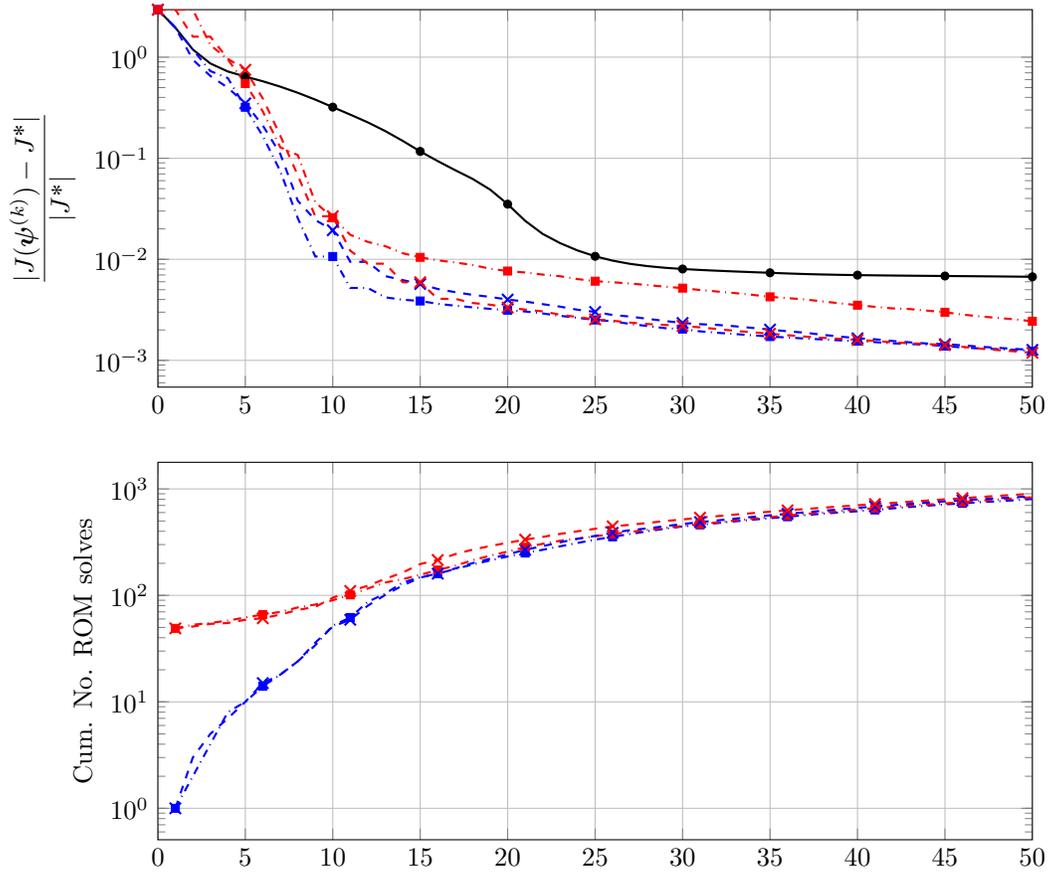}
\caption{Performance of
         HDM-MMA (\ref{line:cbeam0_nel160x100_rmin2_romset1_majit0:hdm}) vs.\
         ROM-TR-RES [$\tau = 0.1$: (\ref{line:cbeam0_nel160x100_rmin2_romset1_majit0:romB1}), $\tau = 1$: (\ref{line:cbeam0_nel160x100_rmin2_romset1_majit0:romB2})]
         vs.\
         ROM-TR-DIST [$\tau = 0.1$: (\ref{line:cbeam0_nel160x100_rmin2_romset1_majit0:romA1}), $\tau = 1$: (\ref{line:cbeam0_nel160x100_rmin2_romset1_majit0:romA2})]
         applied to compliance minimization of the cantilever beam:
         the convergence of the objective function to its optimal value
         (\textit{top row}) and the cumulative number of ROM solves required
         (\textit{bottom row}).}
\label{fig:cbeam0_nel160x100_rmin2_romset1_majit0}
\end{figure}
\begin{table}
\caption{Performance of HDM-MMA vs. ROM-TR-RES vs. ROM-TR-DIST applied to
         compliance minimization of the cantilever beam as a function
         of convergence tolerance. The reference value of the optimal objective is $J^* = 394.71$.}
\label{tab:cbeam0_nel160x100_rmin2}
\begin{tabular}{r|cc|c|ccc}
 & $\epsilon$ & $\tau$ & final objective & \# HDM solves & \# ROM solves & cost ($C_\epsilon$) \\\hline
 \input{py/cbeam0_nel160x100_rmin2_majit0_matinterp1.dat}
\end{tabular}
\end{table}

Figure~\ref{fig:cbeam0_viz} shows the filtered density fields for the ROM-based trust-region method (ROM-TR-RES) and HDM-MMA at select major iterations.
The comparison shows that ROM-TR-RES obtains a good approximation
to the optimal design after only $14$ iterations; the HDM-MMA design contains suboptimal artifacts even after $20$ iterations.  Furthermore, after
only $10$ iterations, the ROM algorithm has essentially discovered the
optimal topology and uses the next several iterations to refine it;
the internal structure has not emerged at this point from the HDM-MMA
method.
\begin{figure}
\centering
\includegraphics[width=0.45\textwidth]{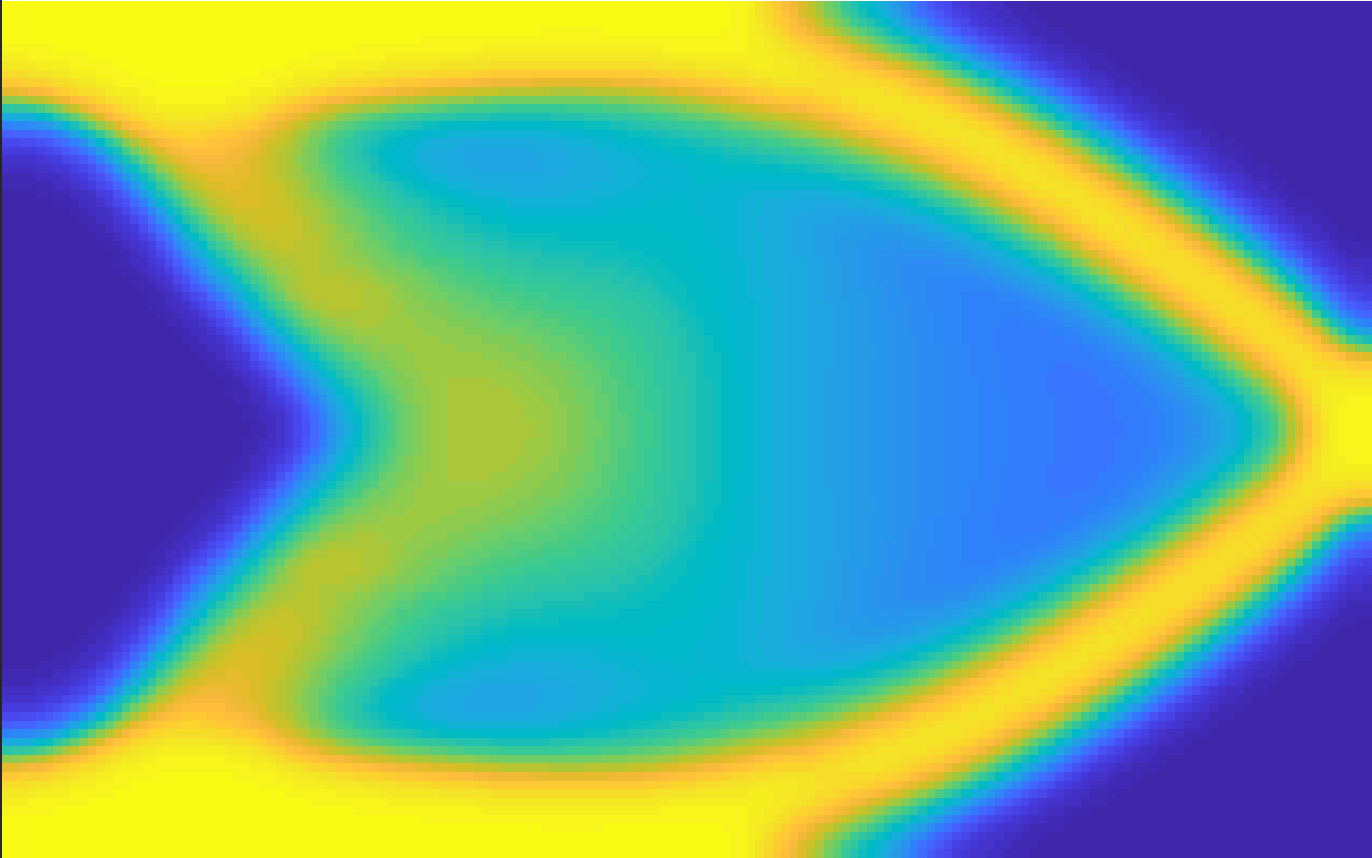} \quad
\includegraphics[width=0.45\textwidth]{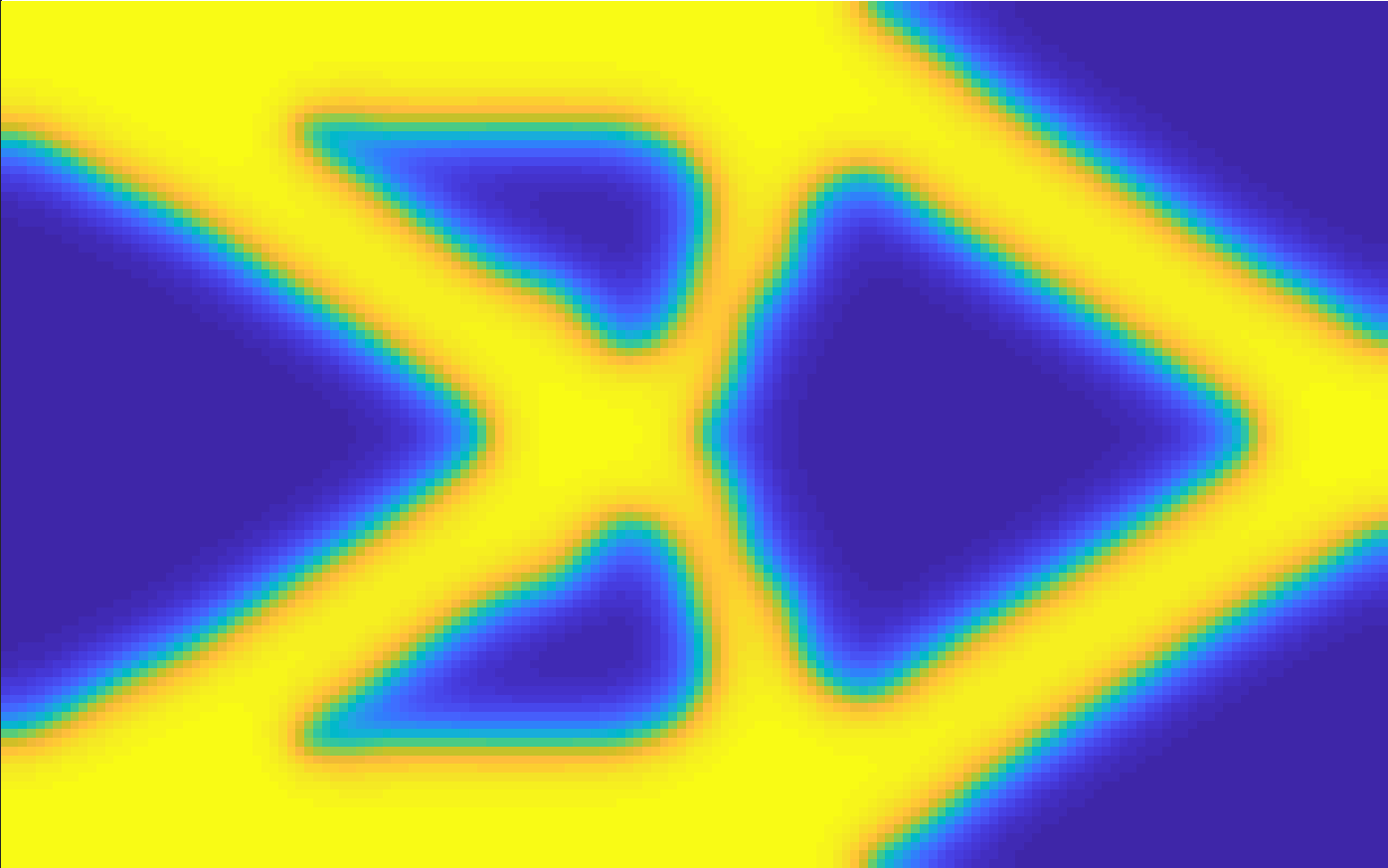} \\\vspace{2mm}
\includegraphics[width=0.45\textwidth]{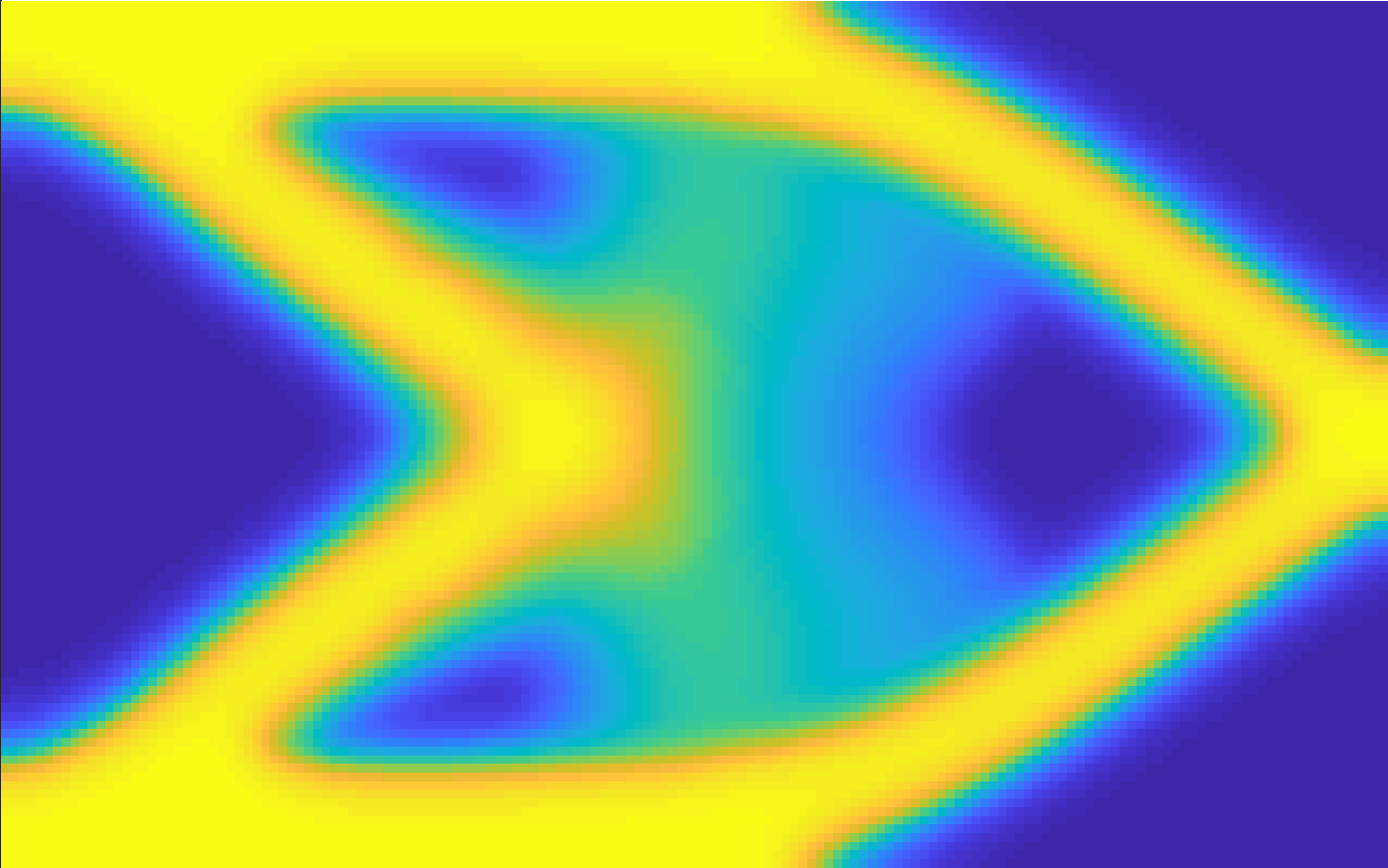} \quad
\includegraphics[width=0.45\textwidth]{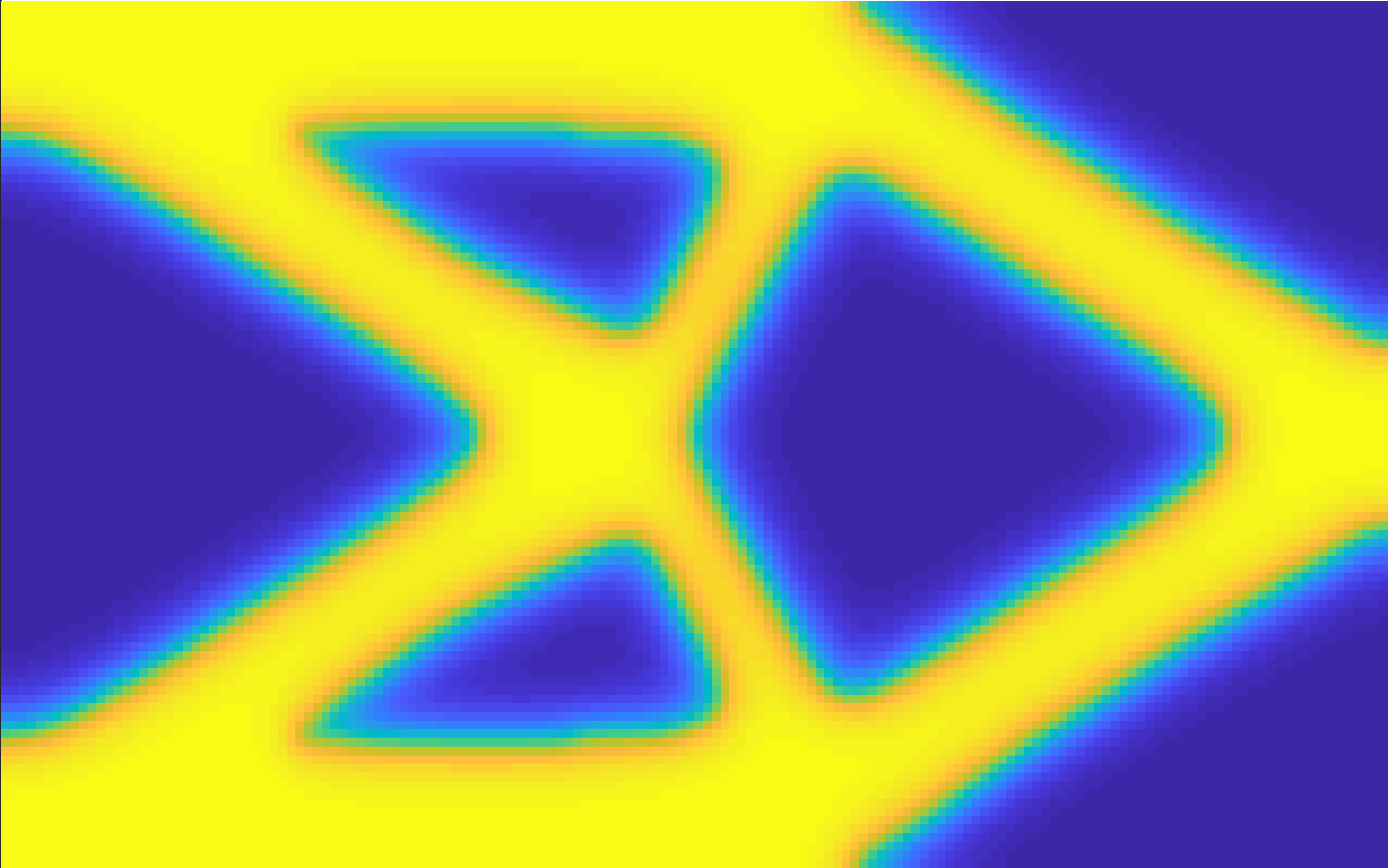} \\\vspace{2mm}
\includegraphics[width=0.45\textwidth]{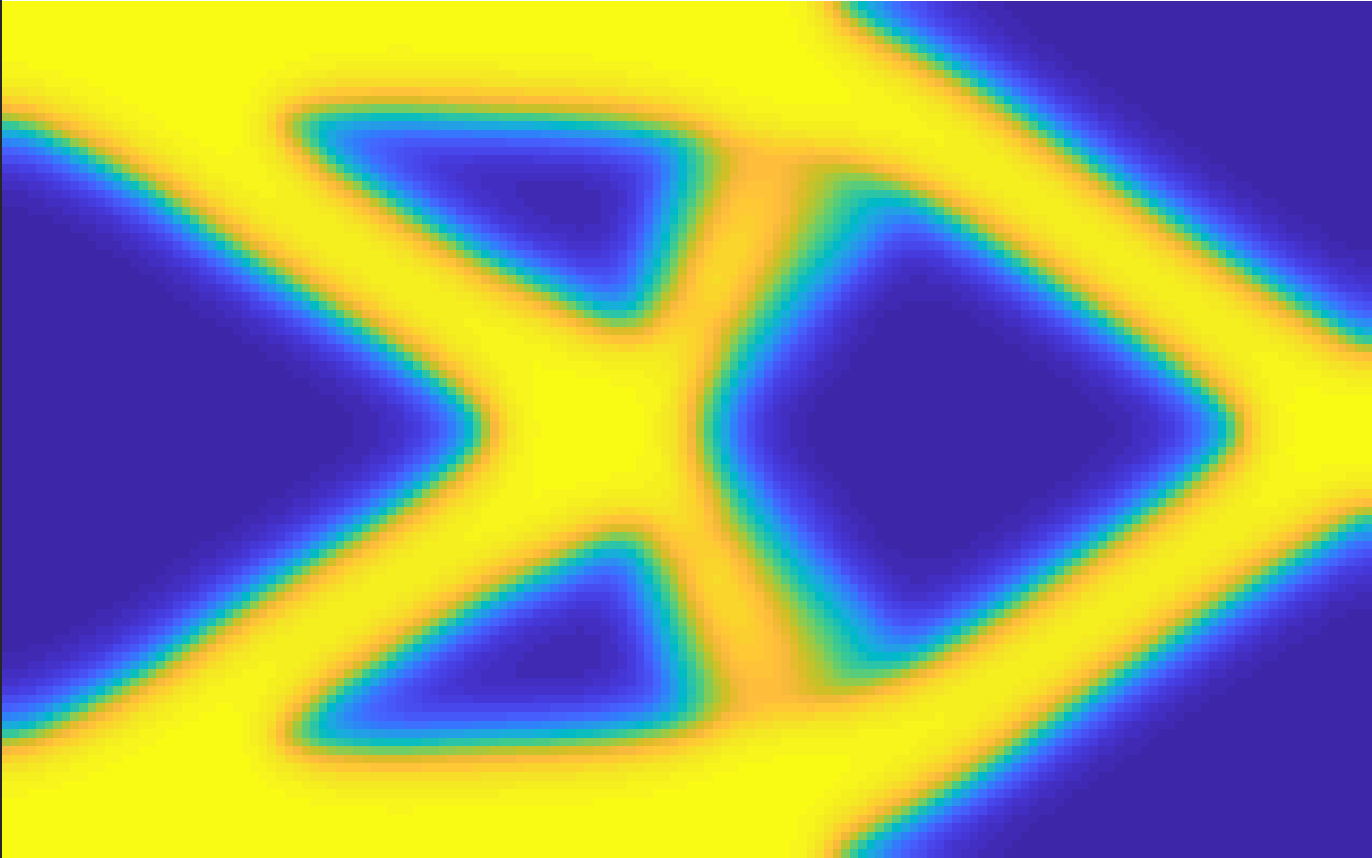} \quad
\includegraphics[width=0.45\textwidth]{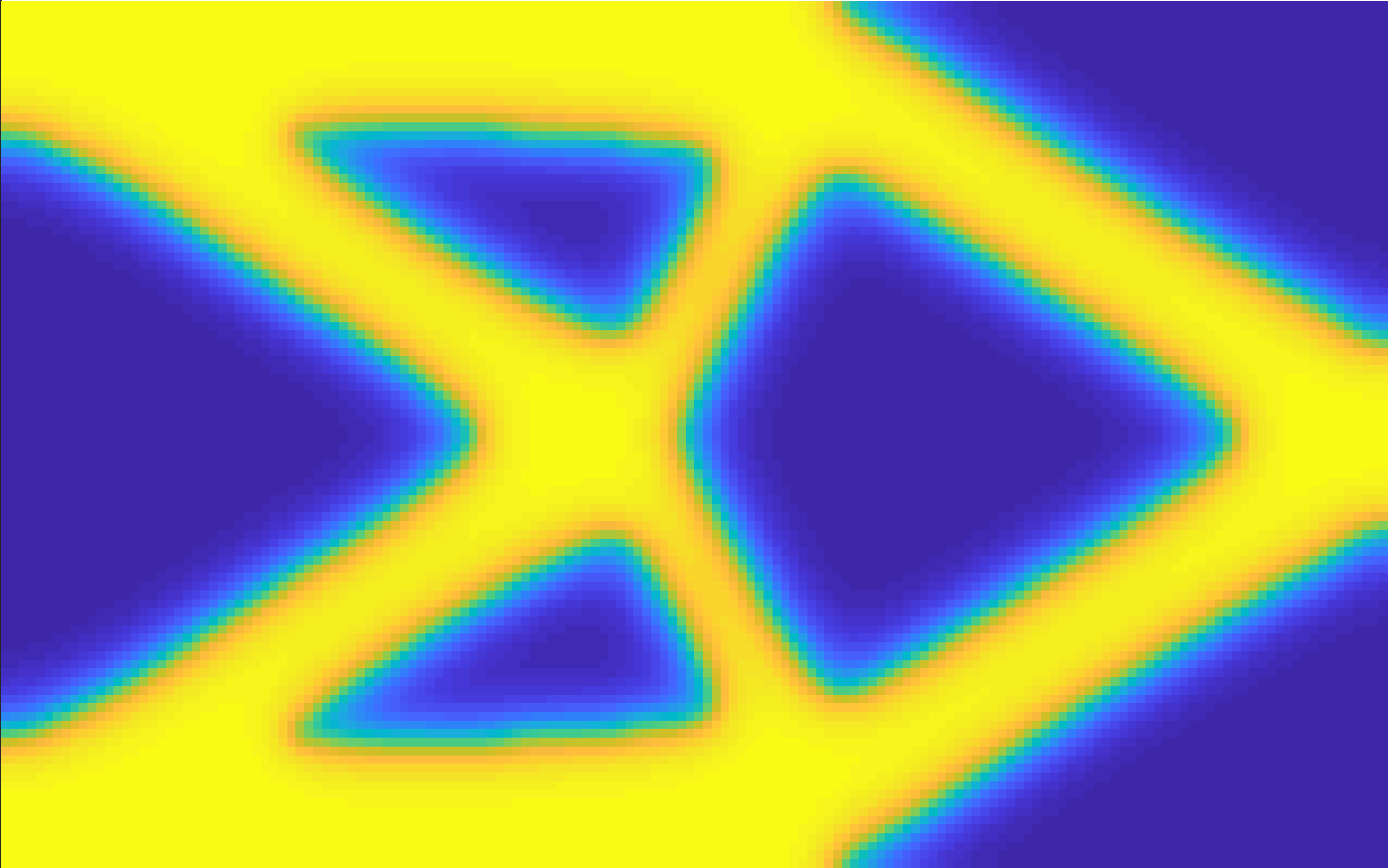}
\caption{Evolution of the design of the cantilever beam using the HDM-MMA
         (\textit{left}) and ROM-TR-RES (\textit{right}) methods at major
  iterations (\textit{top-to-bottom}): 10, 14, 20.}
\label{fig:cbeam0_viz}
\end{figure}

\subsection{Simply supported beam}
Finally, we consider compliance minimization of a simply supported beam
(Figure~\ref{fig:ssbeam0_setup}) using a finite element mesh consisting
of $180\times 90$ bilinear quadrilateral elements, minimum length scale of $R = 0.5$, and maximum volume of $V = \frac{2}{5}|\Omega|$.
The optimal design is shown in Figure~\ref{fig:ssbeam0_setup}
($J^* = 153.92$). All algorithms studied
in this section are initialized from the same feasible design:
$\psibold^{(0)} = 0.4 \cdot \onebold$.
\begin{figure}
\centering
\begin{tikzpicture}
\begin{axis}[
axis equal image,
axis x line*=bottom,
axis y line*=left,
width=0.8\textwidth,
xtick={0, 15, 30},
ytick={0, 15},
grid=both,
ymax=17,
xmax=32,
xmin=-2,
ymin=-5]
\addplot [black, solid, opacity=0.6, fill=lightgray, forget plot]
coordinates {
( 0.00000000e+00,  0.00000000e+00)
( 3.00000000e+01,  0.00000000e+00)
( 3.00000000e+01,  1.50000000e+01)
( 0.00000000e+00,  1.50000000e+01)
( 0.00000000e+00,  0.00000000e+00)};

\draw [-{Latex[width=2mm,length=4mm]}, thick] plot [] coordinates {(axis cs:15.0, 0.0) (axis cs:15.0, -3.0)
};

\node[below]    at    (axis cs:15, -3) {$F_\text{in}$};
\addplot [black, solid, forget plot]
coordinates {
( 0.00000000e+00,  0.00000000e+00)
(-6.92820323e-01, -8.00000000e-01)
( 6.92820323e-01, -8.00000000e-01)
( 0.00000000e+00,  0.00000000e+00)};

\addplot [black, solid, forget plot]
coordinates {
(-6.92820323e-01, -8.00000000e-01)
( 6.92820323e-01, -8.00000000e-01)
( 6.92820323e-01, -9.60000000e-01)
(-6.92820323e-01, -9.60000000e-01)
(-6.92820323e-01, -8.00000000e-01)};

\addplot [black, solid, fill=black, forget plot]
coordinates {
(-6.92820323e-01, -8.00000000e-01)
( 6.92820323e-01, -8.00000000e-01)
( 6.92820323e-01, -9.60000000e-01)
(-6.92820323e-01, -9.60000000e-01)
(-6.92820323e-01, -8.00000000e-01)};

\addplot [black, solid, forget plot]
coordinates {
( 3.00000000e+01,  0.00000000e+00)
( 2.93071797e+01, -8.00000000e-01)
( 3.06928203e+01, -8.00000000e-01)
( 3.00000000e+01,  0.00000000e+00)};

\addplot [black, solid, forget plot]
coordinates {
( 3.02400000e+01, -1.04000000e+00)
( 3.02380296e+01, -1.00930948e+00)
( 3.02321508e+01, -9.79122900e-01)
( 3.02224600e+01, -9.49935919e-01)
( 3.02091165e+01, -9.22227788e-01)
( 3.01923393e+01, -8.96453473e-01)
( 3.01724038e+01, -8.73036188e-01)
( 3.01496376e+01, -8.52360444e-01)
( 3.01244142e+01, -8.34765737e-01)
( 3.00971480e+01, -8.20540970e-01)
( 3.00682866e+01, -8.09919715e-01)
( 3.00383040e+01, -8.03076372e-01)
( 3.00076924e+01, -8.00123308e-01)
( 2.99769545e+01, -8.01109013e-01)
( 2.99465950e+01, -8.06017301e-01)
( 2.99171124e+01, -8.14767579e-01)
( 2.98889908e+01, -8.27216166e-01)
( 2.98626920e+01, -8.43158659e-01)
( 2.98386478e+01, -8.62333281e-01)
( 2.98172530e+01, -8.84425185e-01)
( 2.97988589e+01, -9.09071624e-01)
( 2.97837675e+01, -9.35867903e-01)
( 2.97722266e+01, -9.64374028e-01)
( 2.97644258e+01, -9.94121929e-01)
( 2.97604931e+01, -1.02462315e+00)
( 2.97604931e+01, -1.05537685e+00)
( 2.97644258e+01, -1.08587807e+00)
( 2.97722266e+01, -1.11562597e+00)
( 2.97837675e+01, -1.14413210e+00)
( 2.97988589e+01, -1.17092838e+00)
( 2.98172530e+01, -1.19557481e+00)
( 2.98386478e+01, -1.21766672e+00)
( 2.98626920e+01, -1.23684134e+00)
( 2.98889908e+01, -1.25278383e+00)
( 2.99171124e+01, -1.26523242e+00)
( 2.99465950e+01, -1.27398270e+00)
( 2.99769545e+01, -1.27889099e+00)
( 3.00076924e+01, -1.27987669e+00)
( 3.00383040e+01, -1.27692363e+00)
( 3.00682866e+01, -1.27008028e+00)
( 3.00971480e+01, -1.25945903e+00)
( 3.01244142e+01, -1.24523426e+00)
( 3.01496376e+01, -1.22763956e+00)
( 3.01724038e+01, -1.20696381e+00)
( 3.01923393e+01, -1.18354653e+00)
( 3.02091165e+01, -1.15777221e+00)
( 3.02224600e+01, -1.13006408e+00)
( 3.02321508e+01, -1.10087710e+00)
( 3.02380296e+01, -1.07069052e+00)
( 3.02400000e+01, -1.04000000e+00)};

\addplot [black, solid, fill=black, forget plot]
coordinates {
( 2.93071797e+01, -1.28000000e+00)
( 3.06928203e+01, -1.28000000e+00)
( 3.06928203e+01, -1.44000000e+00)
( 2.93071797e+01, -1.44000000e+00)
( 2.93071797e+01, -1.28000000e+00)};

\addplot []
graphics [xmin=0,xmax=30,ymin=0,ymax=15] { 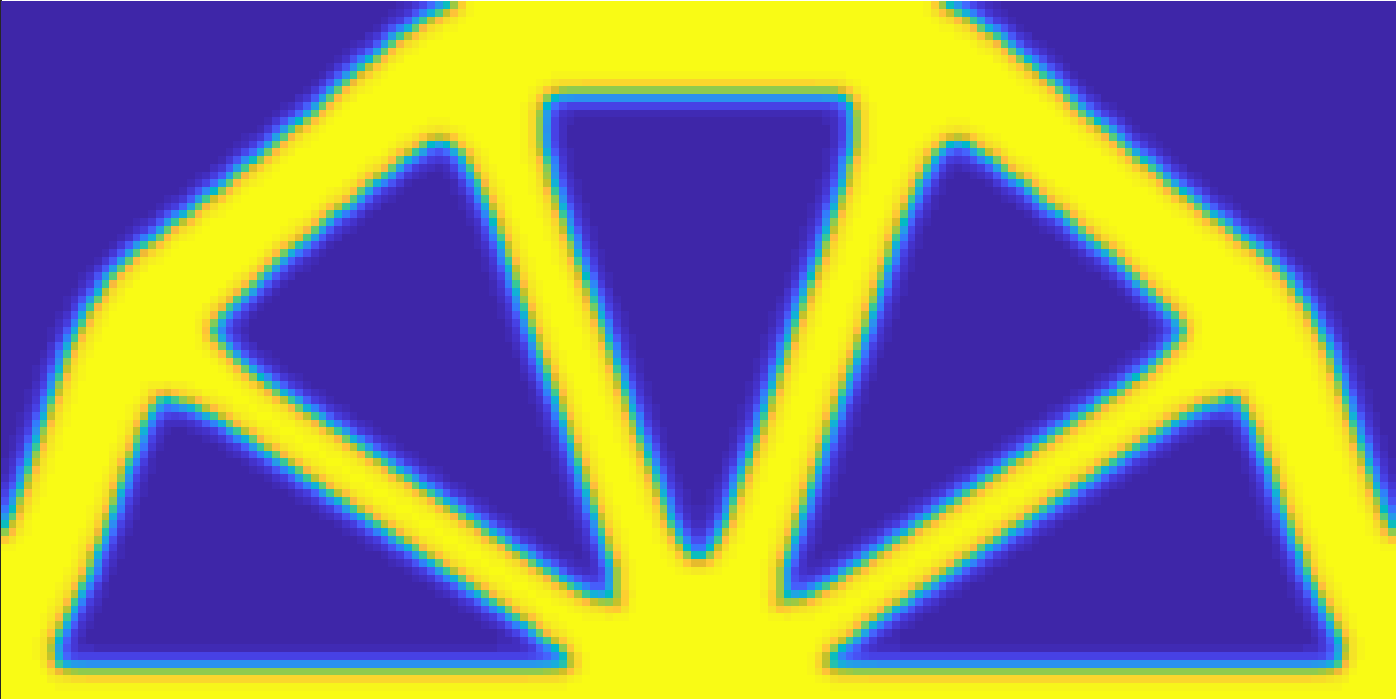};

\end{axis}
\end{tikzpicture}
\caption{Setup and optimal design of a simply supported beam. The point load
         $F_\text{in} = 3$ is implemented as a distributed load of magnitude
         $q_\text{in} = 1$ applied to a segment of length $3$.}
\label{fig:ssbeam0_setup}
\end{figure}

We consider the performance of the ROM-based trust-region methods
relative to the HDM-MMA algorithm
(Figure~\ref{fig:ssbeam0_nel180x90_rmin0p5_romset1_majit0},
 Table~\ref{tab:ssbeam0_nel180x90_rmin0p5}). Unlike the previous
problems, HDM-MMA converges to relatively tight tolerances rather
quickly, e.g., $36$ iterations for $\epsilon = 0.01$ and
$207$ iterations for $\epsilon=0.001$. However, the ROM-based trust-region
methods are faster; the relative speedup is similar to the previous problems.
For example, ROM-TR-DIST ($\tau=0.1$) requires only $14$ HDM evaluations
and $54$ ROM evaluations to converge to $\epsilon=0.01$, which translates to a speedup of greater
than two. For the tighter tolerance $\epsilon=0.001$, ROM-TR-DIST only requires
$19$ HDM evaluations and $105$ ROM evaluations, which translates to an order of magnitude speedup.
\begin{figure}
\centering
\input{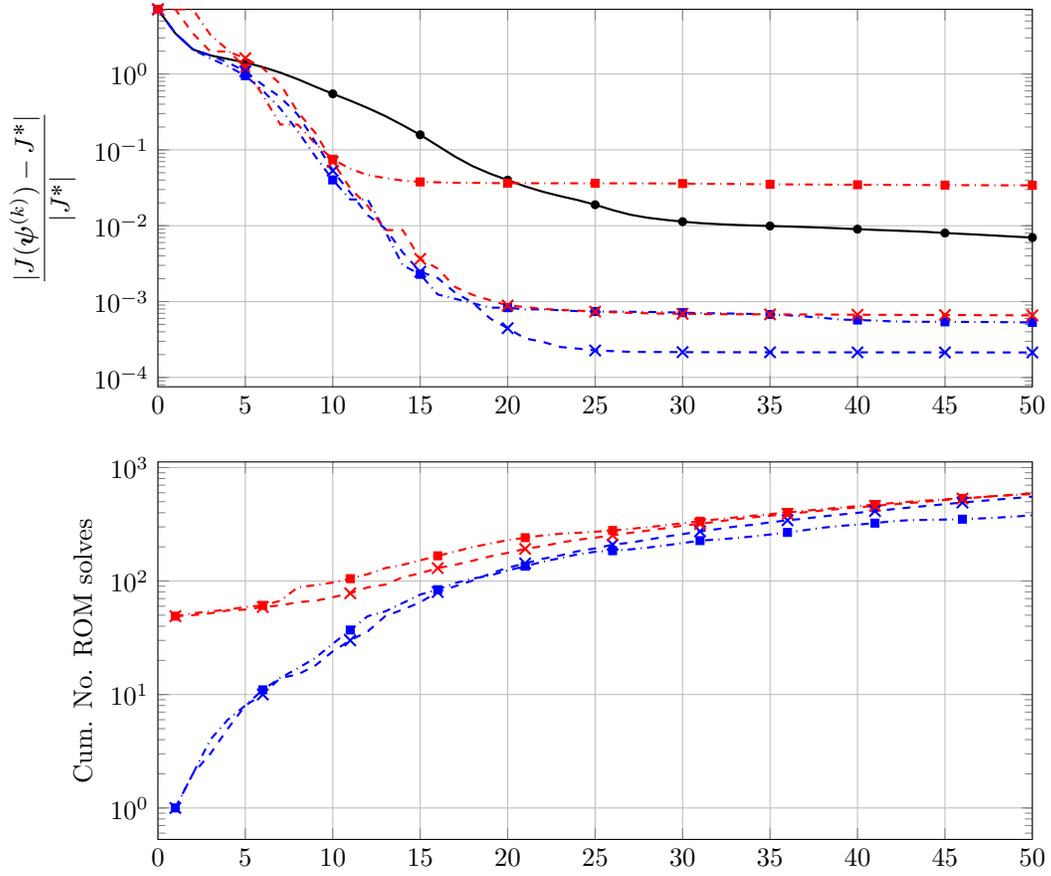}
\caption{Performance of
         HDM-MMA (\ref{line:ssbeam0_nel180x90_rmin0p5_romset1_majit0:hdm}) vs.
         ROM-TR-RES [$\tau = 0.1$: (\ref{line:ssbeam0_nel180x90_rmin0p5_romset1_majit0:romB1}), $\tau = 1$: (\ref{line:ssbeam0_nel180x90_rmin0p5_romset1_majit0:romB2})]
         vs.
         ROM-TR-DIST [$\tau = 0.1$: (\ref{line:ssbeam0_nel180x90_rmin0p5_romset1_majit0:romA1}), $\tau = 1$: (\ref{line:ssbeam0_nel180x90_rmin0p5_romset1_majit0:romA2})]
         applied to compliance minimization of the simply supported beam:
         the convergence of the objective function to its optimal value
         (\textit{top row}) and the cumulative number of ROM solves required
         (\textit{bottom row}).}
\label{fig:ssbeam0_nel180x90_rmin0p5_romset1_majit0}
\end{figure}

Figure~\ref{fig:ssbeam0_viz} shows the filtered density fields for the ROM-based trust-region method (ROM-TR-RES) and HDM-MMA at select major iterations. The comparison reveals ROM-TR-RES obtains a good approximation
to the optimal design after only $10$ iterations; the HDM-MMA design contains suboptimal artifacts even after $30$ iterations.
\begin{figure}
\centering
\includegraphics[width=0.45\textwidth]{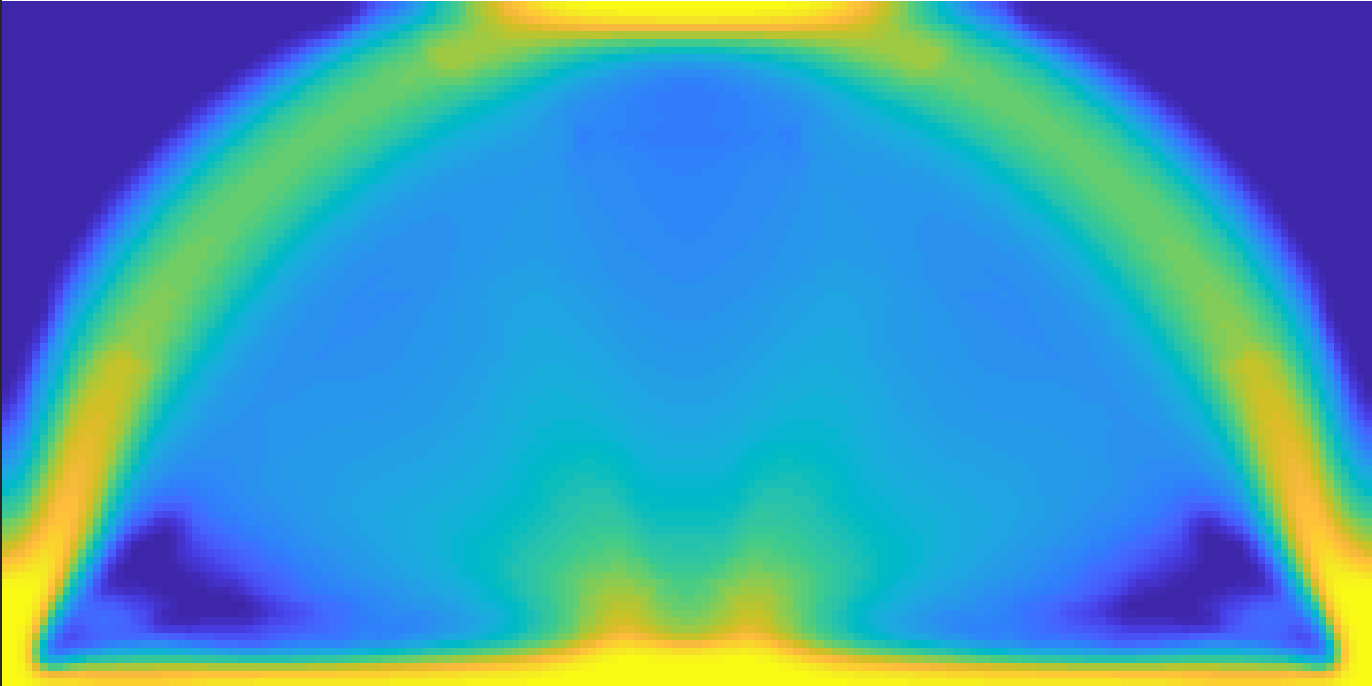} \quad
\includegraphics[width=0.45\textwidth]{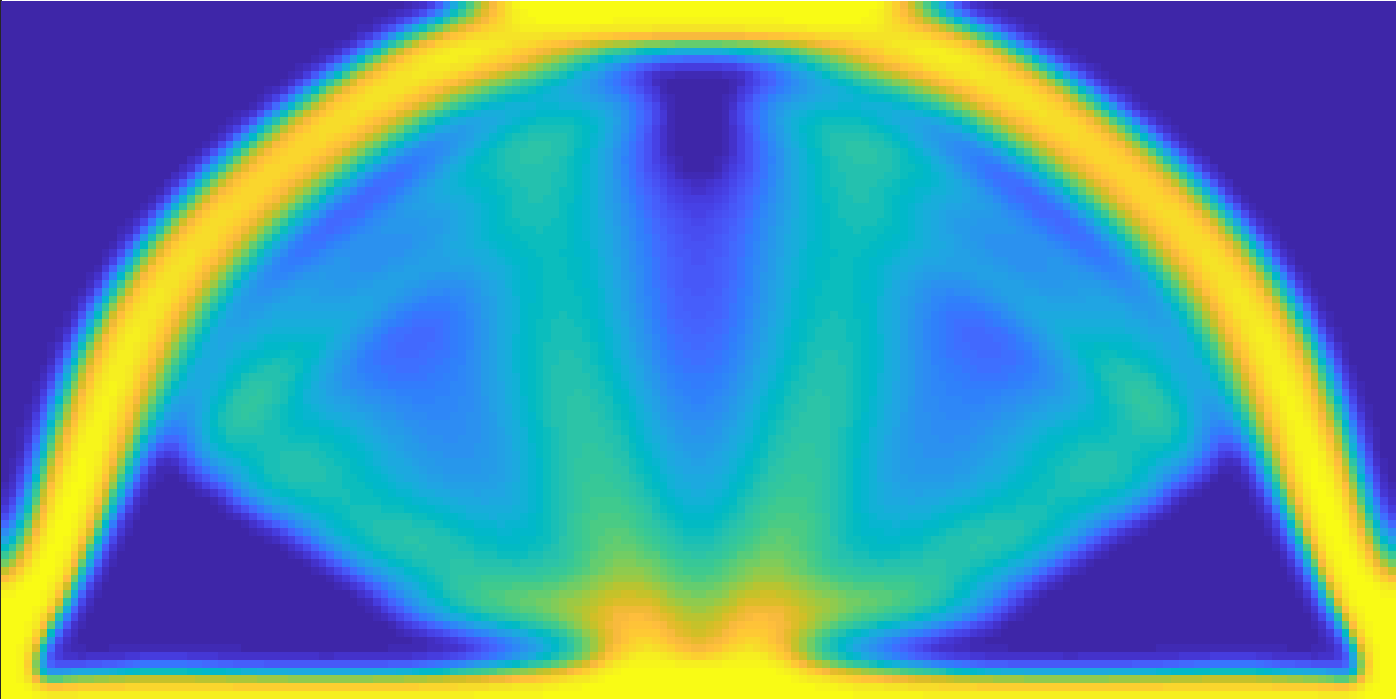} \\\vspace{2mm}
\includegraphics[width=0.45\textwidth]{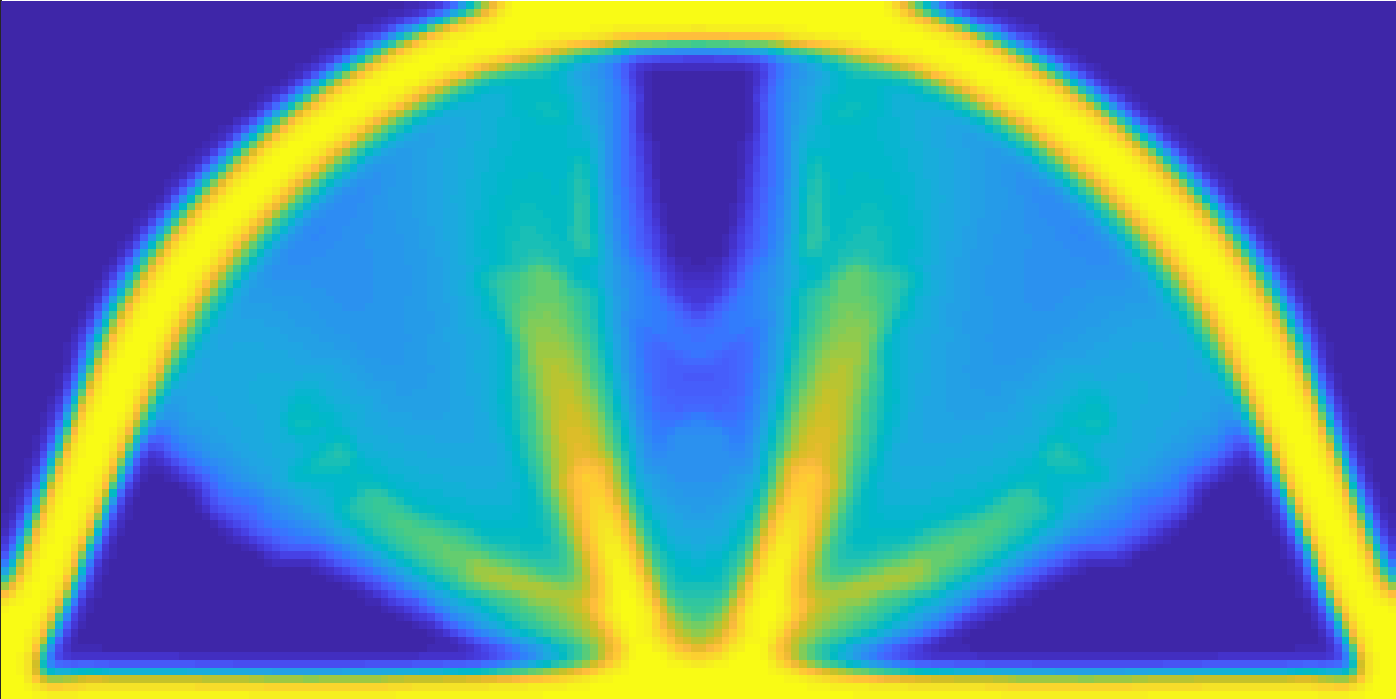} \quad
\includegraphics[width=0.45\textwidth]{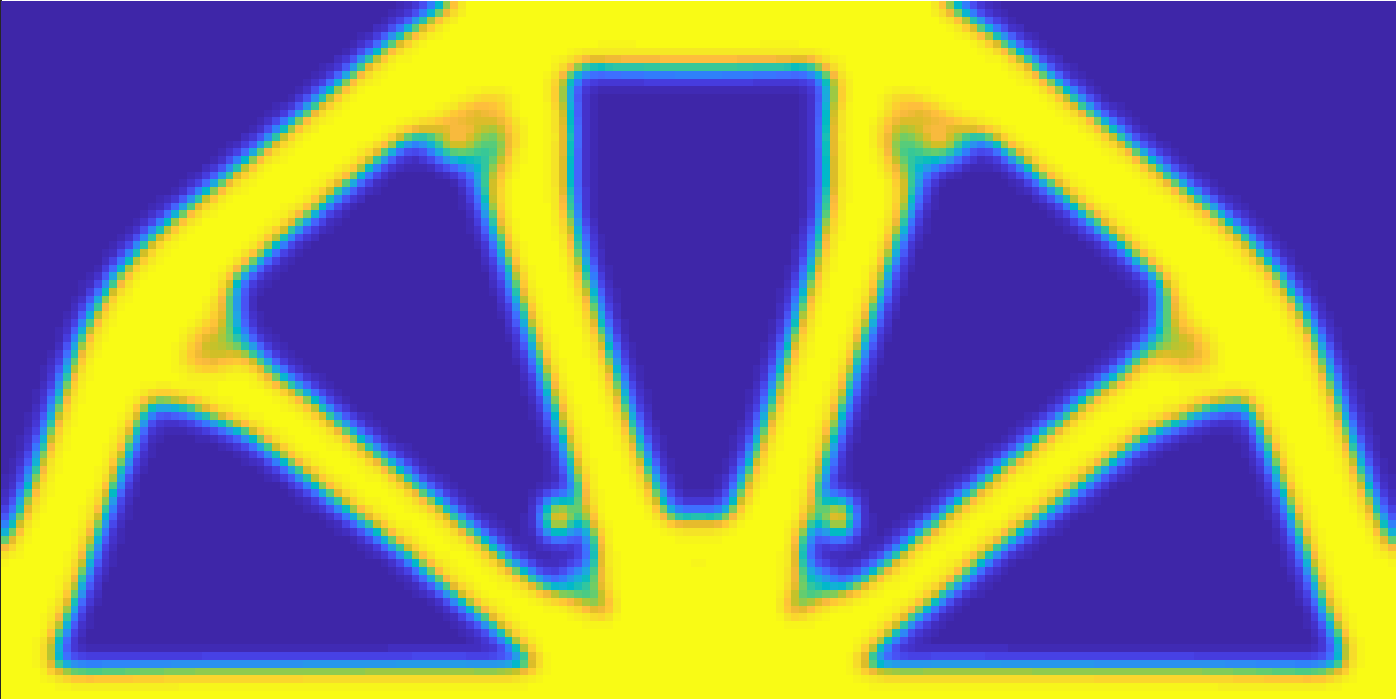} \\\vspace{2mm}
\includegraphics[width=0.45\textwidth]{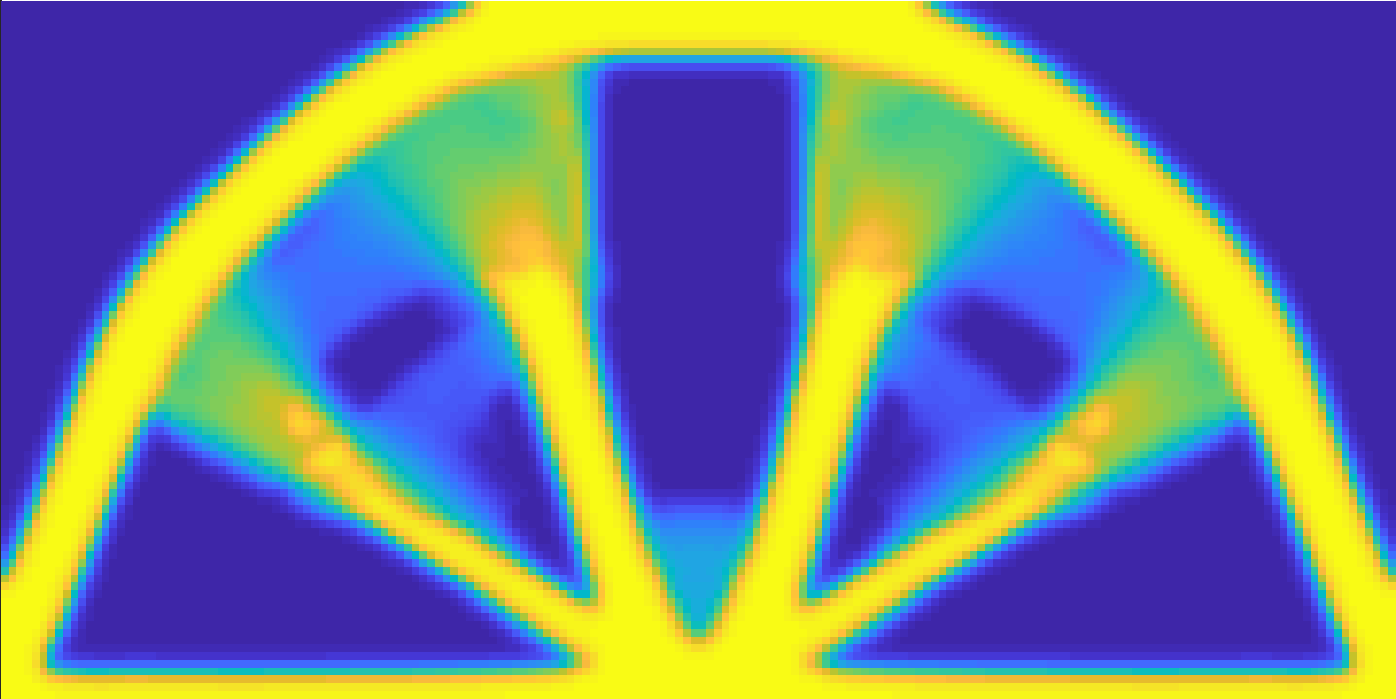} \quad
\includegraphics[width=0.45\textwidth]{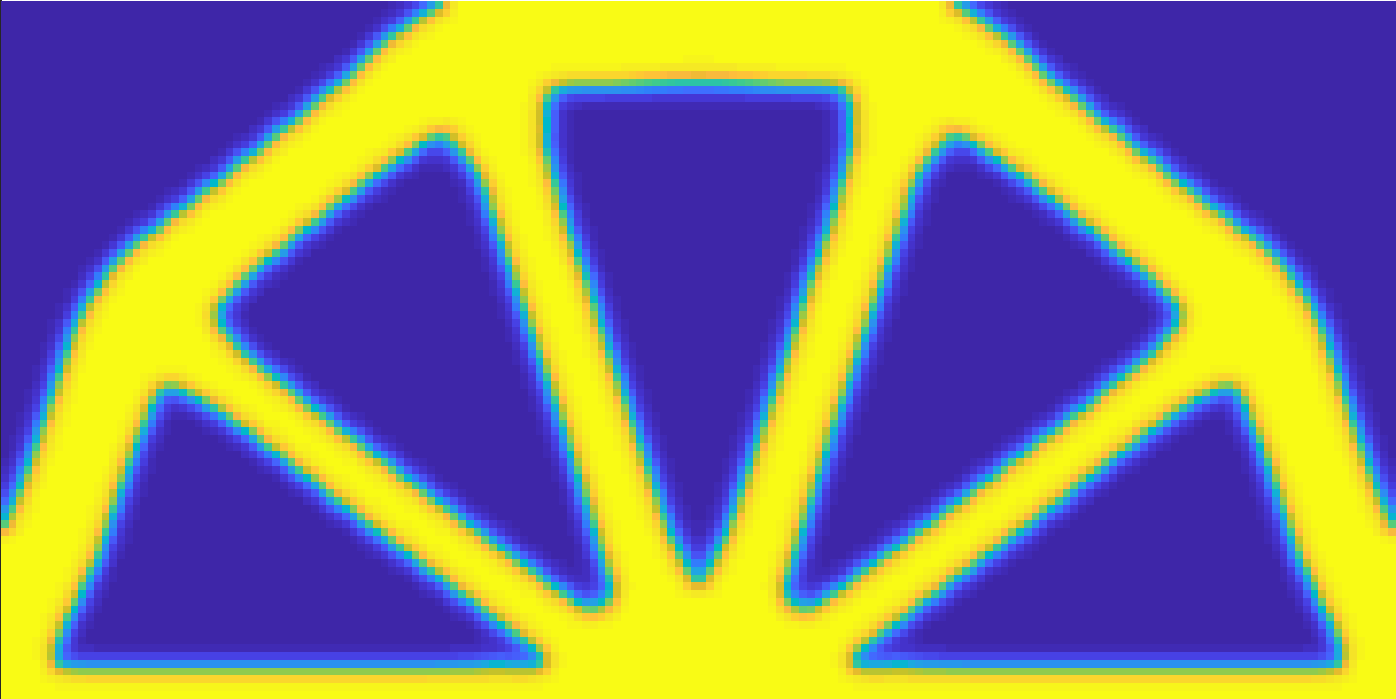} \\\vspace{2mm}
\includegraphics[width=0.45\textwidth]{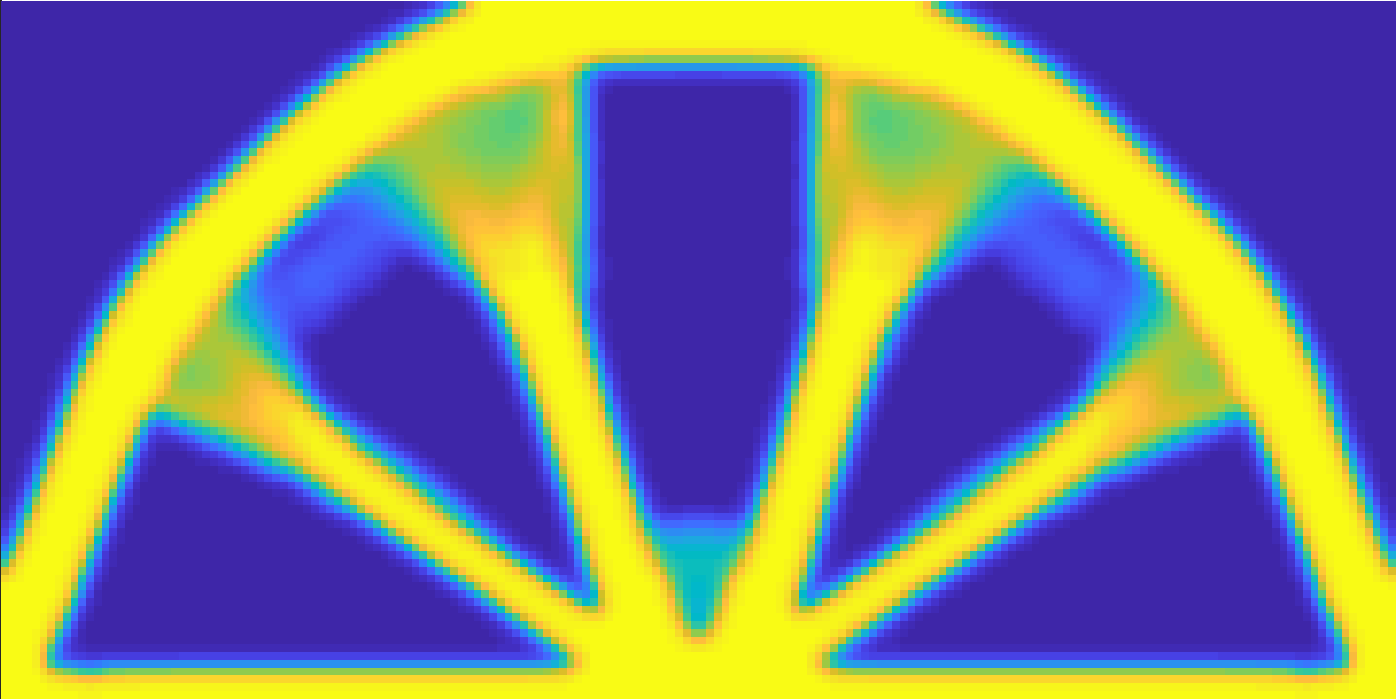} \quad
\includegraphics[width=0.45\textwidth]{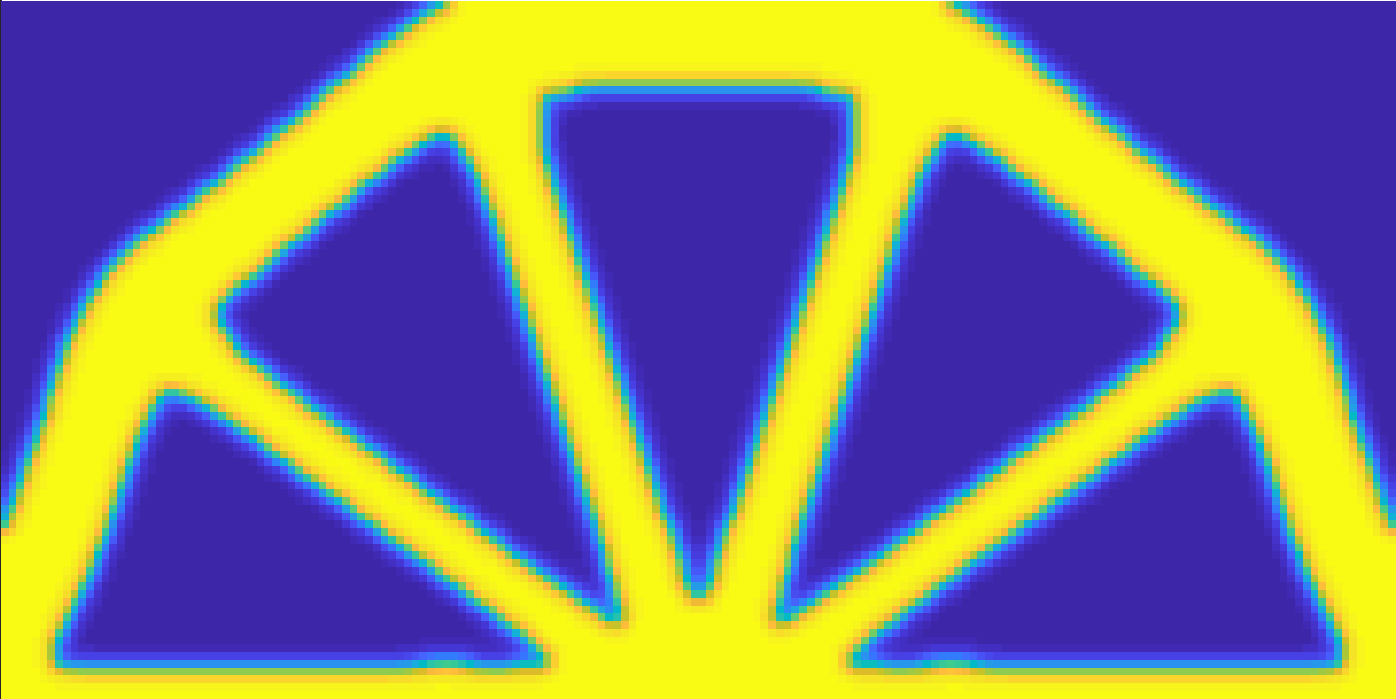} \\\vspace{2mm}
\includegraphics[width=0.45\textwidth]{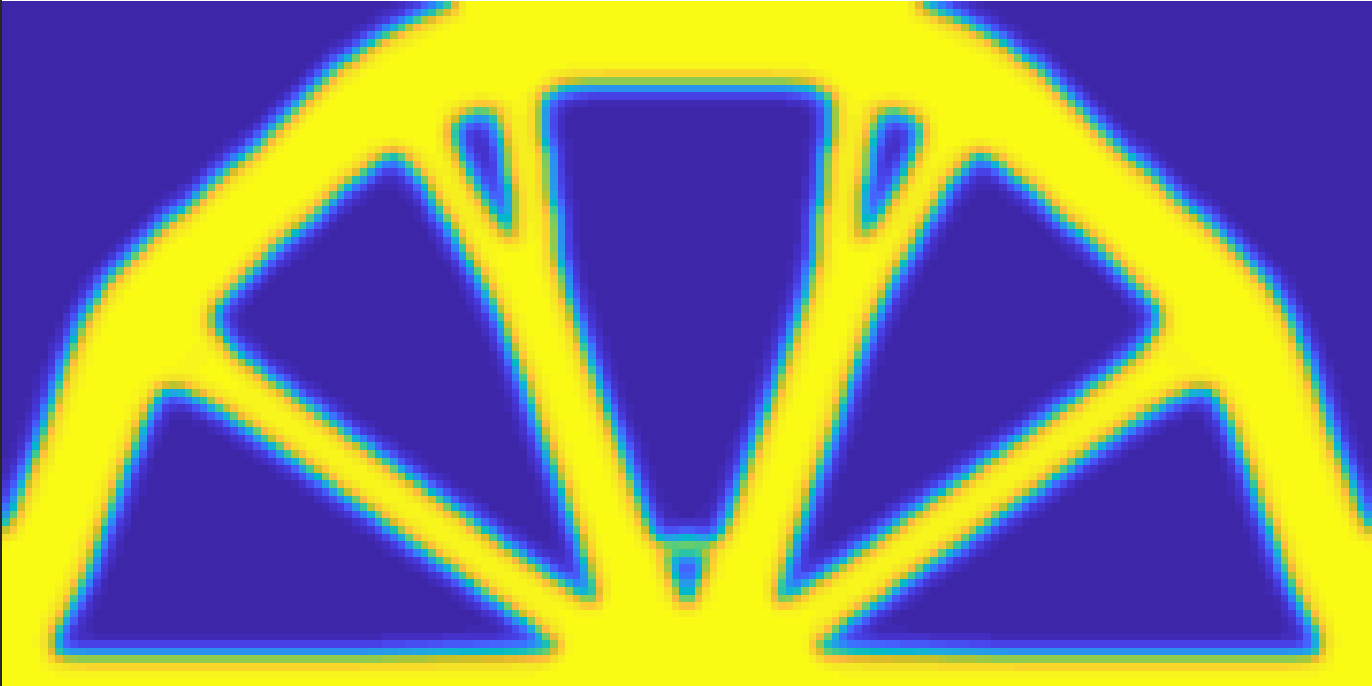} \quad
\includegraphics[width=0.45\textwidth]{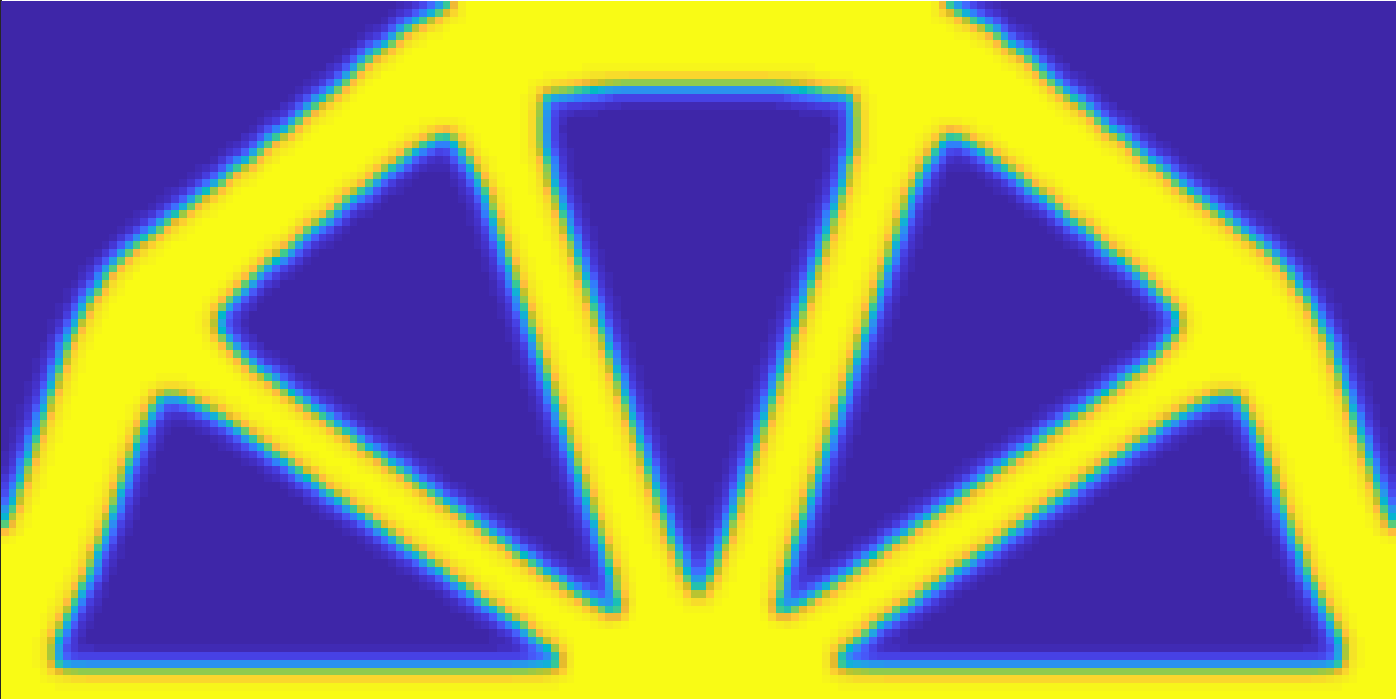}
\caption{Evolution of the design of the simply supported beam using the HDM-MMA
         (\textit{left}) and ROM-TR-RES (\textit{right}) methods at major
  iterations (\textit{top-to-bottom}): 6, 10, 14, 16, 30.}
\label{fig:ssbeam0_viz}
\end{figure}

\begin{table}
\caption{Performance of HDM-MMA vs. ROM-TR-RES vs. ROM-TR-DIST applied to
         compliance minimization of the simply supported beam as a function
         of convergence tolerance. The reference value of the optimal objective is $J^* = 153.92$.}
\label{tab:ssbeam0_nel180x90_rmin0p5}
\begin{tabular}{r|cc|c|ccc}
 & $\epsilon$ & $\tau$ & final objective & \# HDM solves & \# ROM solves & cost ($C_\epsilon$) \\\hline
 \input{py/ssbeam0_nel180x90_rmin0p5_majit0_matinterp1.dat}
\end{tabular}
\end{table}



\section{Conclusion}
\label{sec:concl}
The contributions of this work are twofold:
\begin{inparaenum}[(1)]
 \item the introduction and analysis of a generalized trust-region method
   for nonlinear optimization problems with convex constraints and
 \item a class of efficient, globally convergent topology optimization
   methods based on projection-based ROMs and generalized
   trust regions.
\end{inparaenum}
We consider two different types of trust regions for the reduced
topology optimization problem: the traditional ball (two-norm) in
the design space and the sublevel sets of the norm of the HDM residual
evaluated at the ROM solution. We provide detailed analysis to show
the trust-region method that uses the ROM as the trust-region model
and sublevel sets of the HDM residual norm as the trust-region
constraint is globally convergent. To ensure the
ROM satisfies the accuracy constraints of the trust-region theory,
we sample the HDM solution and adjoint at each trust-region center,
which guarantees the ROM objective function and gradient match those
of the HDM at the trust-region center.
This leads to a ROM that is constructed
on-the-fly and specialized for the solutions obtained on the optimization path. 
We apply the ROM-accelerated methods to three benchmark topology optimization problems; the methods converge to the optimal design up to an order of magnitude
faster than the standard MMA method that does not incorporate a ROM. 
The two trust-region constraints perform similarly in our numerical
experiments, which favors the simpler distance-based constraint;
for the case where the ROM solves are very
efficient ($\Ocal(10^3)$ speedup), we expect to see additional benefits
from the residual-based trust-region by combining with more sophisticated
subproblem solvers.
For cases where the ROM is less efficient, e.g., nonlinear problems
requiring hyperreduction, the cost of solving the trust-region subproblem
with the nonquadratic ROM objective function could become a bottleneck.
In such situations, approximating the ROM objective with a quadratic function
and using trust-region solvers designed to rapidly find a point that satisfies a
fraction of the Cauchy decrease condition, e.g., truncated conjugate gradient,
will likely be beneficial.

Having demonstrated the potential of the ROM-based trust-region method
for topology optimization on several benchmark problems, there are a
number of important avenues for future research. While the ROM evaluations
in this work are much cheaper than HDM evaluations, the cost still scale with
the large dimension ($N_e$) due to the Helmholtz filter. The ROMs
could be further accelerated by constructing a separate reduced-order
model for the Helmholtz system, which would also require an adaptation
strategy to ensure global convergence. In addition, the cost could be
further reduced by relaxing the requirement that the value and
gradient of the reduced objective function be exact at trust-region
centers. This would require the development of a more general error-aware
trust-region theory based on the work in
\cite{kouri_trust-region_2013, kouri_inexact_2014, zahr_phd_2016}.
Another interesting avenue of research is to extend the reduced topology
optimization approach to more complex topology optimization problems including
non-compliance optimization, nonlinear and multiphysics problems, and problems
with solution-dependent and non-convex constraints.


\appendix
\section{Derivation of the objective function gradient by adjoint method}
\label{sec:adjoint}
As $J(\dparam) = j(\dstvcL^\star(\dparam), \ddens^\star(\dparam))$, the gradient can be
expanded as
\begin{equation} \label{eqn:grad-chain0}
 \nabla J(\psibold) \coloneqq
 \pder{J}{\psibold}(\psibold)^T =
 \pder{\ubm^\star}{\psibold}(\psibold)^T
 \pder{j}{\ubm}(\ubm^\star(\psibold),\rhobold^\star(\psibold))^T +
 \pder{\rhobold^\star}{\psibold}(\psibold)^T
 \pder{j}{\rhobold}(\ubm^\star(\psibold),\rhobold^\star(\psibold))^T.
\end{equation}
The gradient $\nabla J$ requires matrix-vector products of two sensitivities,
$\ds{\pder{\ubm^\star}{\psibold}}$ and
$\ds{\pder{\rhobold^\star}{\psibold}}$, with the partial derivatives of
$\func{j}{\Rbb^{N_\ubm}\times\Rbb^{N_e}}{\Rbb}$, which can be derived
analytically from the expression for $j(\ubm, \rhobold)$.

We first consider the evaluation of $\ds{\pder{\rhobold^\star}{\psibold}}^T \vbm$ for an arbitrary $\vbm \in \Rbb^{N_e}$.
First, from the definition of $\func{\rho_e^\star}{\Rbb^{N_v}}{\Rbb}$ in \eqref{eqn:cdens-star},  
\begin{equation} \label{eqn:dens-adj0}
 \pder{\rhobold^\star}{\psibold}^T\vbm =
 \sum_{e=1}^{N_e}\pder{\rho_e^\star}{\psibold}^Tv_e =
 \pder{\phibold^\star}{\psibold}^T
 \sum_{e=1}^{N_e} \frac{v_e}{N_v^e} \Qbm_e\onebold =
 \pder{\phibold^\star}{\psibold}^T \qbm(\vbm),
\end{equation}
where $\qbm(\vbm) = \sum_{e=1}^{N_e} \frac{v_e}{N_v^e} \Qbm_e\onebold$.
Next, from the definition of $\phibold^\star$, the Helmholtz
residual is zero irrespective of variations in $\psibold \in \Rbb^{N_e}$,
which in turn implies the total derivative of the Helmholtz residual
with respect to $\psibold$ is zero, i.e.,
\begin{equation} \label{eqn:helm-sens0}
 \oder{\Rbm_\phibold(\phibold^\star(\psibold),\psibold)}{\psibold} = 
 \zerobold.
\end{equation}
Expanding the total derivative in \eqref{eqn:helm-sens0} via the chain rule
and dropping arguments, we have
\begin{equation*} 
 \pder{\Rbm_\phibold}{\phibold}\pder{\phibold^\star}{\psibold} +
 \pder{\Rbm_\phibold}{\psibold} = \zerobold,
\end{equation*}
which can be rearranged to obtain the following expression for $\ds{\pder{\phibold^\star}{\psibold}^T} \qbm(\vbm)$: 
\begin{equation*} 
 \pder{\phibold^\star}{\psibold}^T\qbm(\vbm) =
 -\pder{\Rbm_\phibold}{\psibold}^T\pder{\Rbm_\phibold}{\phibold}^{-T}\qbm(\vbm).
\end{equation*}
From the definition of the Helmholtz residual
\eqref{eqn:helm-res0} and \eqref{eqn:helm-res1}, we have
\begin{equation*} 
  \pder{\Rbm_\phibold}{\phibold}(\phibold;\psibold) = \Hbm,
  \qquad
 \pder{\Rbm_\phibold}{\psi_e}(\phibold;\psibold) =
 -\Qbm_e \bbm_e, \quad e = 1,\dots,N_e
\end{equation*}
where summation is \emph{not} implied over the repeated index, which yields
\begin{equation*} 
 \pder{\phibold^\star}{\psi_e}^T\qbm(\vbm) =
 -\bbm_e^T\Qbm_e^T\Hbm^{-T}\qbm(\vbm) = -\bbm_e^T\Qbm_e^T\mubold(\vbm),
 \qquad e = 1,\dots,N_e,
\end{equation*}
where $\mubold(\vbm) \in \Rbb^{N_v}$ is the adjoint variable that satisfies~\eqref{eqn:helm-adj}, $\Hbm^T\mubold(\vbm) = \qbm(\vbm)$.



We next consider the evaluation of $\ds{\pder{\ubm^\star}{\psibold}}^T \vbm$ for an arbitrary $\vbm \in \Rbb^{N_\ubm}$.
From the definition of $\ubm^\star$, the elasticity
residual is zero irrespective of variations in $\psibold \in \Rbb^{N_e}$,
which in turn implies the total derivative of the elasticity residual
with respect to $\psibold$ is zero, i.e.,
\begin{equation} \label{eqn:linelast-sens0}
 \oder{\rbm_\ubm(\ubm^\star(\psibold),\rhobold^\star(\psibold))}{\psibold} = 
 \zerobold.
\end{equation}
Expanding the total derivative in \eqref{eqn:linelast-sens0} via the chain
rule and dropping arguments, we have
\begin{equation} \label{eqn:linelast-sens1}
 \pder{\rbm_\ubm}{\ubm}\pder{\ubm^\star}{\psibold} +
 \pder{\rbm_\ubm}{\rhobold}\pder{\rhobold^\star}{\psibold} = \zerobold,
\end{equation}
which can be rearranged to obtain the following expression for the product of $\ds{\pder{\ubm^\star}{\psibold}^T}$ with $\ds{\pder{j}{\ubm}^T}$
\begin{equation} \label{eqn:linelast-adj0}
 \pder{\ubm^\star}{\psibold}^T\pder{j}{\ubm}^T =
 -\pder{\rhobold^\star}{\psibold}^T\pder{\rbm_\ubm}{\rhobold}^T
  \pder{\rbm_\ubm}{\ubm}^{-T}\pder{j}{\ubm}^T =
 -\pder{\rhobold^\star}{\psibold}^T\pder{\rbm_\ubm}{\rhobold}^T
  \lambdabold^\star(\psibold);
\end{equation}
here $\lambdabold^\star(\psibold)$ satisfies the linear elasticity adjoint equation \eqref{eqn:linelast-adj1}, $\Kbm(\rhobold^\star(\psibold))^T\lambdabold^\star(\psibold) =  \pder{j}{\ubm}(\ubm(\psibold),\psibold)^T$, 
where we have appealed to the definition of the elasticity residual
\eqref{eqn:linelastU} to obtain $\pder{\rbm_\ubm}{\ubm}(\ubm;\rhobold) = \Kbm(\rhobold)$.



Combining \eqref{eqn:grad-chain0}, \eqref{eqn:dens-adj0}, \eqref{eqn:linelast-adj0}, the gradient $\nabla J$ can be written compactly as \eqref{eq:pap_sensitivity}, $\nabla J = \pder{\rhobold^\star}{\psibold}^T \left(\pder{j}{\rhobold}^T-\pder{\rbm_\ubm}{\rhobold}^T \lambdabold^\star\right)$.

\section{Proof of global convergence of error-aware trust-region method}
\label{sec:tr-proofs}
The proofs in this section directly follow those setforth in
\cite{kouri_trust-region_2013}.

\begin{proof}[Proof of Theorem~\ref{thm:fcd}]
Let 
\begin{equation}
 \delta_k \coloneqq \kappa_{\nabla\vartheta}^{-1}\Delta_k, \qquad
 \Dcal_k \coloneqq \{\xbm\in\Ccal\mid\norm{\xbm-\xbm_k}_2\leq \delta_k\}.
\end{equation}
We will show that (i) $\Dcal_k \subset \Bcal_k$, (ii) there exists $\xbm \in \Dcal_k$ that satisfies the fraction of (generalized) Cauchy decrease condition, and hence (iii) there exists $\xbm \in \Bcal_k$ that satisfies the condition.


First we show that $\Dcal_k\subset\Bcal_k$. Take any $\xbm\in\Dcal_k$,
which can be written as $\xbm = \xbm_k+\pbm_k$ for
$\norm{\pbm_k}_2\leq\delta_k$. Then
\begin{equation}
 \vartheta_k(\xbm) = \vartheta_k(\xbm_k+\pbm_k) =
 \vartheta_k(\xbm_k) + \nabla\vartheta_k(\xibold)^T\pbm_k \leq
 \left|\nabla\vartheta_k(\xibold)^T\pbm_k\right| \leq
 \kappa_{\nabla\vartheta}\delta_k \leq \Delta_k
\end{equation}
for some $\xibold = t \xbm + (1-t) \xbm_k$, $t\in[0,1]$; here, the second equality follows from the continuous differentiability of $\vartheta$ and the mean value theorem, the
first inequality follows from
Assumption~\ref{assume:trcon}.\ref{assume:trcon:val} ($\vartheta_k(\xbm_k)=0$),
the second inequality follows from the Cauchy-Schwartz inequality,
and the last inequality follows from the definition of $\delta_k$. Since
$\xbm\in\Ccal$ and $\vartheta_k(\xbm)\leq\Delta_k$, we have $\xbm\in\Bcal_k$,
which establishes the inclusion $\Dcal_k\subset\Bcal_k$.

Next we recall a standard result from trust-region theory
(Theorem~12.2.2 of \cite{conn_trust-region_2000}) that establishes
the existence of a point $\xbm\in\Dcal_k$ such that
\begin{equation}
m_k(\xbm_k) - m_k(\xbm) \geq
\kappa \chi(\xbm_k)
\min\left[\frac{\chi(\xbm_k)}{\beta_k}, \delta_k, 1\right]
\end{equation}
for some $\kappa\in(0,1)$. By $\Dcal_k\subset\Bcal_k$ and
the definition of $\delta_k$, there exists $\xbm\in\Bcal_k$
such that
\begin{equation}
m_k(\xbm_k) - m_k(\xbm) \geq
\kappa \chi(\xbm_k)
\min\left[\frac{\chi(\xbm_k)}{\beta_k}, \kappa_{\nabla\vartheta}^{-1}\Delta_k, 1\right],
\end{equation}
which completes the proof.
\end{proof}

\begin{lemma} \label{lemma:delta-zero}
Suppose Assumptions~\ref{assume:con}.\ref{assume:con:c2}-\ref{assume:con}.\ref{assume:con:qual},~\ref{assume:obj}.\ref{assume:obj:c2}-\ref{assume:obj}.\ref{assume:obj:bnd},~\ref{assume:model}.\ref{assume:model:c2}-\ref{assume:model}.\ref{assume:model:hess},~\ref{assume:trcon}.\ref{assume:trcon:c2}-\ref{assume:trcon}.\ref{assume:trcon:val} hold and there
exists $\epsilon>0$ and $K > 0$ such that $\chi(\xbm_k) \geq \epsilon$ for all $k > K$.
Then the sequence of trust-region radii $\{\Delta_k\}$
produced by Algorithm~\ref{alg:etr} satisfies
\begin{equation*}
 \sum_{k=1}^\infty \Delta_k < \infty.
\end{equation*}
\begin{proof}

  We first consider the case with a finite number of successful iterations. 
In this case, there exists $K' > 0$ such that all iterations $k > K'$ are unsuccessful. Then,
\begin{equation*}
\sum_{k=1}^\infty \Delta_k = \sum_{k=1}^{K'} \Delta_k +
                       \sum_{k=K'+1}^\infty \Delta_k
= C + \sum_{k=K'+1}^\infty \Delta_k,
\end{equation*}
where $\displaystyle{C = \sum_{k=1}^{K'} \Delta_k < \infty}$.  Since
iterations $k > K'$ are unsuccessful,
$\Delta_{k+1} \leq \gamma_2\Delta_k$ and
$\displaystyle{\sum_{k={K'}+1}^\infty \Delta_k}$ is bounded above by a
geometric series, implying the infinite sum is finite. Therefore, the result
holds if there are a finite number of successful iterations.

We now consider the case with an infinite sequence of successful iterations $\{k_i\}$.  In this case, for all $i$ such that $k_i > K$,
\begin{equation*}
 \begin{aligned}
  F(\xbm_{k_i}) - F(\xbm_{k_{i+1}}) &\geq
  F(\xbm_{k_i}) - F(\xbm_{k_{i}+1})  =
  F(\xbm_{k_i}) - F(\hat\xbm_{k_i})
  \geq \eta_1(m_{k_i}(\xbm_{k_i}) - m_{k_i}(\hat\xbm_{k_i})) \\
  &\geq \eta_1\kappa\chi(\xbm_{k_i})
      \min\left[\frac{\chi(\xbm_{k_i})}{\beta_k},~
                 \kappa_{\nabla\vartheta}^{-1}\Delta_{k_i},1\right] 
  \geq \eta_1\kappa\epsilon
      \min\left[\frac{\epsilon}{\beta},~
                 \kappa_{\nabla\vartheta}^{-1}\Delta_{k_i},1\right],
 \end{aligned}
\end{equation*}
for some constant $\kappa\in(0,1)$.  (Note that the subscript for the second $\xbm$ in the first and second expressions are $k_{i+1}$ and $k_i + 1$, respectively.)  
The first inequality holds because
the sequence $\{F(\xbm_k)\}$ is non-increasing owing to the step acceptance
condition in Algorithm~\ref{alg:etr}, and the first equality holds because
iteration $k_i$ is successful (by construction). The remaining inequalities
follow from the step acceptance condition in Algorithm~\ref{alg:etr}, the
the fraction of Cauchy decrease (\ref{eqn:fcd}), and the assumption that
$\chi(\xbm_{k_i}) \geq \epsilon$ for all $k_i > K$. Summing over all $i > I$ for $k_I = K$,
\begin{equation*}
\eta_1\kappa\epsilon\sum_{i \geq I}
\min\left[
\frac{\epsilon}{\beta},
~\kappa_{\nabla\vartheta}^{-1}\Delta_{k_i},1\right]
\leq \sum_{i \geq I} ( F(\xbm_{k_i}) - F(\xbm_{k_{i}+1}) )
= F(\xbm_{k_I}) - \lim_{i \rightarrow \infty} F(\xbm_{k_i}) < \infty,
\end{equation*}
where the equality follows from the telescoping series, and the finiteness of the limit follows from $F$ being bounded below. 
Since $\epsilon/\beta$ and $1$ are bounded away from zero, the inequality above
implies that $\sum_{i=1}^\infty \Delta_{k_i} < \infty$.

Let $\Scal \subset \Nbb$ be the ordered set of indices of successful
iterations. For every $k \notin \Scal$, $\Delta_k \leq
\gamma_2^{k-j(k)}\Delta_{j(k)}$, where $j(k) \in \Scal$ is the largest index
such that $j(k) < k$, i.e. $j(k)$ represents the last successful iteration
before the unsuccessful iteration $k$. Summing over all $k \notin \Scal$,
\begin{equation*}
\sum_{k \notin \Scal} \Delta_k \leq
\sum_{k \notin \Scal} \gamma_2^{k-j(k)}\Delta_{j(k)} =
\sum_{i = 1}^\infty \sum_{j(i) < k < j(i+1)}\gamma_2^{k-j(i)}\Delta_{j(i)}
\leq \frac{1}{1 - \gamma_2}\sum_{i=1}^\infty \Delta_{j(i)} =
\frac{1}{1 - \gamma_2}\sum_{k \in \Scal} \Delta_k
< \infty.
\end{equation*}
Then,
\begin{equation*}
\sum_{k=1}^\infty \Delta_k = \sum_{k \in \Scal} \Delta_k +
                      \sum_{k \notin \Scal} \Delta_k
\leq \left(1 + \frac{1}{1-\gamma_2}\right) \sum_{k \in \Scal} \Delta_k
< \infty.
\end{equation*}
This proves the desired result.
\end{proof}
\end{lemma}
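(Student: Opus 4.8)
The plan is to argue by a dichotomy on whether Algorithm~\ref{alg:etr} produces finitely or infinitely many \emph{successful} iterations (those with $\varrho_k \geq \eta_1$), and in each case to exploit that the trust-region radius can only shrink geometrically across unsuccessful steps while successful steps force a quantified decrease in $F$. The two workhorses are: (i) the trust-region update rule, which gives $\Delta_{k+1} \leq \gamma_2 \Delta_k$ on every unsuccessful iteration; and (ii) the fraction of (generalized) Cauchy decrease~\eqref{eqn:fcd}, which together with the step-acceptance test $\varrho_k \geq \eta_1$ and the standing hypothesis $\chi(\xbm_k) \geq \epsilon$ yields a uniform per-successful-step descent in $F$.

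First I would dispose of the case where only finitely many iterations are successful. Then there is $K'$ beyond which every step is unsuccessful, so $\Delta_{k+1} \leq \gamma_2 \Delta_k$ for all $k > K'$; the tail $\sum_{k > K'} \Delta_k$ is dominated by a convergent geometric series (since $\gamma_2 < 1$) and the head is a finite sum, giving $\sum_k \Delta_k < \infty$ at once. The substantive case is an infinite set of successful indices $\{k_i\}$. For each such index, chaining the monotonicity of $\{F(\xbm_k)\}$, the acceptance criterion $F(\xbm_{k_i}) - F(\hat\xbm_{k_i}) \geq \eta_1\bigl(m_{k_i}(\xbm_{k_i}) - m_{k_i}(\hat\xbm_{k_i})\bigr)$, and~\eqref{eqn:fcd}, and then bounding below using $\chi(\xbm_{k_i}) \geq \epsilon$ and $\beta_{k_i} \leq \beta$, I obtain
\[
 F(\xbm_{k_i}) - F(\xbm_{k_i+1}) \geq \eta_1 \kappa \epsilon \min\!\left[\tfrac{\epsilon}{\beta},\, \kappa_{\nabla\vartheta}^{-1}\Delta_{k_i},\, 1\right].
\]
Summing over $i$ and telescoping (using that $F$ is constant across the unsuccessful steps between consecutive centers), the left side is bounded by $F(\xbm_{k_I}) - \inf_\Ccal F < \infty$ since $F$ is bounded below (Assumption~\ref{assume:obj}.\ref{assume:obj:bnd}). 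Because $\epsilon/\beta$ and $1$ are constants bounded away from zero, the minimum must coincide with $\kappa_{\nabla\vartheta}^{-1}\Delta_{k_i}$ for all large $i$, forcing $\sum_i \Delta_{k_i} < \infty$, i.e., summability over the successful indices.

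It then remains to recover the unsuccessful indices. For each $k \notin \Scal$ let $j(k) \in \Scal$ be the most recent successful index preceding $k$; the update rule gives $\Delta_k \leq \gamma_2^{\,k - j(k)} \Delta_{j(k)}$, so summing the geometrically decaying run of unsuccessful steps following each successful one yields $\sum_{k \notin \Scal} \Delta_k \leq (1-\gamma_2)^{-1} \sum_{k \in \Scal} \Delta_k < \infty$, and adding the successful and unsuccessful contributions closes the argument. I expect the main obstacle to be the bookkeeping in this last step---cleanly partitioning $\Nbb$ into successful iterations and the geometric runs of unsuccessful iterations that follow them, and justifying the reindexing that converts $\sum_{k \notin \Scal} \gamma_2^{\,k-j(k)}\Delta_{j(k)}$ into a single geometric factor times $\sum_{k \in \Scal}\Delta_k$. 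The descent estimate itself is a routine concatenation of the acceptance test and~\eqref{eqn:fcd}; the delicate part is ensuring the unsuccessful tail is controlled \emph{by} the successful sum rather than the reverse.
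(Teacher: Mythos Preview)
Your proposal is correct and follows essentially the same approach as the paper: the same dichotomy on finitely versus infinitely many successful iterations, the same chain of inequalities (monotonicity of $F$, acceptance test, fraction of Cauchy decrease~\eqref{eqn:fcd}, and the bounds $\chi(\xbm_{k_i})\geq\epsilon$, $\beta_{k_i}\leq\beta$) to obtain $\sum_i \Delta_{k_i}<\infty$ via telescoping and the lower bound on $F$, and the same $j(k)$ bookkeeping to control the unsuccessful indices by a geometric factor times the successful sum.
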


\begin{lemma} \label{lemma:rho-one}
  Suppose Assumptions~\ref{assume:con}.\ref{assume:con:c2}-\ref{assume:con}.\ref{assume:con:qual},~\ref{assume:obj}.\ref{assume:obj:c2}-\ref{assume:obj}.\ref{assume:obj:bnd},~\ref{assume:model}.\ref{assume:model:c2}-\ref{assume:model}.\ref{assume:model:hess},~\ref{assume:trcon}.\ref{assume:trcon:c2}-\ref{assume:trcon}.\ref{assume:trcon:err} hold and there exists $\epsilon>0$ and $K > 0$ such that
$\chi(\xbm_k)\geq \epsilon$ for all $k > K$. Then the ratios
  $\{\varrho_k\}$ produced by Algorithm~\ref{alg:etr} converge to one.
  
\begin{proof}
We appeal to the asymptotic error bound on the approximation model in
Assumption~\ref{assume:trcon}.\ref{assume:trcon:err} and the fact
that the candidate step lies within the trust region (i.e.,
$\hat\xbm_k \in \Bcal_k$) to obtain
\begin{equation}
 |F(\hat\xbm_k)-m_k(\hat\xbm_k)| \leq
 \zeta\vartheta_k(\hat\xbm_k)^\nu \leq \zeta\Delta_k^\nu.
\end{equation}
From Theorem~\ref{thm:fcd} and the convergence criteria on the trust-region
subproblem in Algorithm~\ref{alg:etr}, we have
\begin{equation*}
 m_k(\xbm_k) - m_k(\hat\xbm_k) \geq
 \kappa\chi(\xbm_k)
 \min\left[\frac{\chi(\xbm_k)}{\beta_k},
            \kappa_{\nabla\vartheta}^{-1}\Delta_k,1\right]
\end{equation*}
for constant $\kappa\in(0,1)$.
Then, there exists $K' > K$ such that, for all $k > K$,
\begin{equation*}
 m_k(\xbm_k) - m_k(\hat\xbm_k) \geq
  \kappa_{\nabla\vartheta}^{-1}\kappa\epsilon\Delta_k,
\end{equation*}
because $\Delta_k \to 0$ by Lemma~\ref{lemma:delta-zero},
$\chi(\xbm_k) \geq \epsilon$ by the assumption, and $\beta_k\leq\beta$.
Combining
the above inequalities yields
\begin{equation*}
 \left|\varrho_k - 1\right| =
 \left|\frac{F(\xbm_k)-F(\hat\xbm_k)+m_k(\hat\xbm_k)-m_k(\xbm_k)}
            {m_k(\xbm_k) - m_k(\hat\xbm_k)}\right| =
 \left|\frac{F(\hat\xbm_k)-m_k(\hat\xbm_k)}
            {m_k(\xbm_k) - m_k(\hat\xbm_k)}\right| \leq
 \frac{\zeta\Delta_k^{\nu}}{\kappa_{\nabla\vartheta}^{-1}
                            \kappa\epsilon\Delta_k} =
 \frac{\zeta}{\kappa_{\nabla\vartheta}^{-1}\kappa\epsilon}
              \Delta_k^{\nu-1}
\end{equation*}
for all $k > K'$.
Therefore, $\varrho_k \rightarrow 1$ since $\Delta_k \rightarrow 0$
(Lemma~\ref{lemma:delta-zero}) and $\nu > 1$.
\end{proof}
\end{lemma}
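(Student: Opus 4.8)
The plan is to exploit the two defining features of the error-aware trust region: the model matches the objective \emph{value} at the center (Assumption~\ref{assume:model}.\ref{assume:model:val}), and the model error is controlled by a strictly superlinear power of the trust-region constraint (Assumption~\ref{assume:trcon}.\ref{assume:trcon:err}). First I would rewrite the deviation of the ratio from unity. Since $m_k(\xbm_k)=F(\xbm_k)$, subtracting one and clearing the denominator collapses the numerator to the model error at the candidate point alone,
\[
  \varrho_k - 1 = \frac{m_k(\hat\xbm_k) - F(\hat\xbm_k)}{m_k(\xbm_k) - m_k(\hat\xbm_k)},
\]
so that $|\varrho_k-1|$ is exactly the candidate-point model error divided by the predicted reduction.

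Next I would bound numerator and denominator separately. For the numerator, because the candidate step lies in the trust region, $\hat\xbm_k \in \Bcal_k$ gives $\vartheta_k(\hat\xbm_k)\le\Delta_k$, so Assumption~\ref{assume:trcon}.\ref{assume:trcon:err} yields $|F(\hat\xbm_k)-m_k(\hat\xbm_k)|\le \zeta\,\vartheta_k(\hat\xbm_k)^\nu \le \zeta\,\Delta_k^\nu$. For the denominator, I would invoke the fraction of Cauchy decrease (Theorem~\ref{thm:fcd} together with the subproblem convergence criterion~\eqref{eqn:fcd}),
\[
  m_k(\xbm_k) - m_k(\hat\xbm_k) \ge \kappa\,\chi(\xbm_k)\min\!\left[\frac{\chi(\xbm_k)}{\beta_k},\,\kappa_{\nabla\vartheta}^{-1}\Delta_k,\,1\right].
\]

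The crucial step is to simplify this minimum for large $k$. Here I would appeal to Lemma~\ref{lemma:delta-zero}, whose hypotheses coincide with those assumed here, to conclude $\Delta_k\to 0$. Combined with the standing assumption $\chi(\xbm_k)\ge\epsilon$ and the uniform bound $\beta_k\le\beta$, the candidate terms $\chi(\xbm_k)/\beta_k\ge\epsilon/\beta$ and $1$ are bounded away from zero, so for all sufficiently large $k$ the minimum is attained by the vanishing term $\kappa_{\nabla\vartheta}^{-1}\Delta_k$, giving the linear-in-$\Delta_k$ lower bound $m_k(\xbm_k)-m_k(\hat\xbm_k)\ge \kappa_{\nabla\vartheta}^{-1}\kappa\epsilon\,\Delta_k$. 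Dividing the two bounds yields $|\varrho_k-1|\le (\zeta\kappa_{\nabla\vartheta}/(\kappa\epsilon))\,\Delta_k^{\nu-1}$, which tends to zero since $\Delta_k\to0$ and $\nu>1$.

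The main obstacle---really the conceptual heart of the result---is the interplay between the superlinear exponent $\nu>1$ and the merely linear lower bound on the predicted reduction: the numerator carries $\Delta_k^{\nu}$ while the denominator is only linear in $\Delta_k$, so the single surplus power $\Delta_k^{\nu-1}$ is precisely what drives $\varrho_k\to1$. This is why the argument would collapse for $\nu=1$, and it is exactly the reason Lemma~\ref{lem:assume_ver} introduces the $(1-\epsilon)$ exponent in the residual-based constraint to secure $\nu>1$. A secondary point requiring care is the reliance on Lemma~\ref{lemma:delta-zero}, which supplies $\Delta_k\to0$: this is needed both to guarantee that the $\Delta_k$ term wins the minimum in the Cauchy-decrease bound and to force $\Delta_k^{\nu-1}\to0$ at the end.
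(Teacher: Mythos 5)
Your proposal is correct and follows essentially the same path as the paper's proof: the identity $\varrho_k-1=\bigl(m_k(\hat\xbm_k)-F(\hat\xbm_k)\bigr)/\bigl(m_k(\xbm_k)-m_k(\hat\xbm_k)\bigr)$ via $m_k(\xbm_k)=F(\xbm_k)$, the bound $\zeta\Delta_k^\nu$ on the numerator from Assumption~\ref{assume:trcon}.\ref{assume:trcon:err}, the fraction of Cauchy decrease with Lemma~\ref{lemma:delta-zero} forcing the minimum onto the $\kappa_{\nabla\vartheta}^{-1}\Delta_k$ term, and the surplus power $\Delta_k^{\nu-1}\to0$. Your closing observation about why $\nu>1$ is essential (and why $\nu=1$ fails, motivating the $(1-\epsilon)$ exponent in Lemma~\ref{lem:assume_ver}) accurately identifies the conceptual heart of the argument as presented in the paper.
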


\begin{proof}[Proof of Theorem~\ref{thm:globconv}]
  We prove by contradiction.  Suppose there exists $\epsilon > 0$ such that
$\chi(\xbm_k) \geq \epsilon$ for all $k > K$ for some $K > 0$. 
By Lemma~\ref{lemma:rho-one}, there exists $K'' \geq K$ such that, for all
$k > K''$, $\varrho_k$ is sufficiently close to
$1$ and the corresponding step is successful. From
Algorithm~\ref{alg:etr}, this implies
$\Delta_{K''} \leq \Delta_k \leq \Delta_\text{max}$. This result contradicts
Lemma~\ref{lemma:delta-zero} and hence
$ \liminf_{k \rightarrow \infty}~\chi(\xbm_k) = 0.$
\end{proof}


\begin{proof}[Proof of Theorem~\ref{thm:globconv2}]
To prove the result we need to verify the objective function $F$
(\ref{eqn:obj-topopt}), constraints $\Ccal$ (\ref{eqn:feaset-topopt}),
approximation model $m_k$ (\ref{eqn:model-topopt}), and trust-region
constraint $\vartheta_k$ (\ref{eqn:trcon-topopt}) satisfy
Assumptions~\ref{assume:con}-\ref{assume:trcon}.  The proofs are provided in~\ref{sec:assume_proofs}.
\end{proof}

\section{Verification of Assumptions~\ref{assume:con}--\ref{assume:trcon} for topology optimization}
\label{sec:assume_proofs}
The following lemmas verify Assumptions~\ref{assume:con}--\ref{assume:trcon} for ROM-accelerated topology optimization introduced in~Section~\ref{sec:tr:etr-topopt}.

\begin{lemma}[Verification of Assumption~\ref{assume:con} for topology optimization.]
  \label{lem:assume:con}
  The constraints of the topology optimization problem~\eqref{eqn:topopt0} satisfies Assumption~\ref{assume:con}.

  \begin{proof}
    We wish to show the following: (C1) $\Ccal = \cap_{i=1}^m \Ccal_i$, where $\Ccal_i=\{\xbm\in\Rbb^N\mid c_i(\xbm)\geq 0\}$ and each $\func{c_i}{\Rbb^N}{\Rbb}$ is twice continuously differentiable on $\Rbb^N$; (C2) $\Ccal$ is nonempty, closed, convex; (C3) a first-order constraint qualification holds at any critical point of \eqref{eqn:optim-gen}.
    The condition (C1) is satisfied because the constraint set \eqref{eqn:feaset-topopt} is the intersection of linear constraints, which comprise simple bounds and a single general linear constraint.
    The condition (C2) is satisfied because (i) $\Ccal$ is convex and closed because it is the intersection of linear and non-strict inequality constraints and (ii) $\Ccal$ is nonempty because  $\zerobold \in \Ccal$ since $V\geq 0$.  Finally, the condition (C3) is satisfied since all constraints are linear functions, a first-order constraint qualification holds (Lemma 12.7 of \cite{nocedal_numerical_2006}).
\end{proof}
\end{lemma}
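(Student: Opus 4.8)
The plan is to verify the three conditions of Assumption~\ref{assume:con} in turn, in each case exploiting the fact that every constraint defining the feasible set $\Ccal$ in~\eqref{eqn:feaset-topopt} is affine in $\psibold$.

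First I would rewrite the feasible set as an explicit intersection of sublevel sets of smooth functions. The box $\Psi = [0,1]^{N_e}$ contributes the $2N_e$ affine functions $c_e(\psibold) = \psi_e$ and $c_{N_e+e}(\psibold) = 1 - \psi_e$ for $e = 1, \dots, N_e$, while the volume bound contributes the single affine function $c_{2N_e+1}(\psibold) = V - \sum_{e=1}^{N_e} \psi_e |\Omega_e|$. Each $c_i$ is affine and hence $C^\infty$ (in particular twice continuously differentiable) on $\Rbb^{N_e}$, and by construction $\Ccal = \cap_i \{\psibold \mid c_i(\psibold) \geq 0\}$, which establishes the first condition.

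For the second condition, convexity and closedness are immediate: each $\Ccal_i$ is a closed half-space because $c_i$ is affine and the inequality is non-strict, and a finite intersection of closed convex sets is again closed and convex. Nonemptiness I would dispatch by exhibiting an explicit feasible point; since the hypothesis of Theorem~\ref{thm:globconv2} guarantees $V \geq \sum_{e=1}^{N_e} \rho_l |\Omega_e| \geq 0$, the point $\psibold = \zerobold$ lies in $\Psi$ and satisfies $\sum_{e=1}^{N_e} 0 \cdot |\Omega_e| = 0 \leq V$, so $\zerobold \in \Ccal$.

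The only step requiring genuine care is the third condition, the constraint qualification, and I would flag it as the single nontrivial point even though it too is ultimately routine. One cannot simply invoke the linear independence constraint qualification, which genuinely fails at vertices of the box where many bound constraints are simultaneously active. Instead I would appeal to the standard fact that when all constraints are affine a first-order (linear/affine) constraint qualification holds automatically at every feasible point, so that the KKT conditions remain necessary for optimality without any further assumption; this is Lemma~12.7 of \cite{nocedal_numerical_2006}. Since all three families of constraints identified above are affine, that result applies verbatim at any critical point of~\eqref{eqn:optim-gen}, completing the verification. In short, there is no substantive obstacle: the entire content of the lemma is the observation that affineness of the constraints trivializes each of the three conditions, and recognizing which constraint qualification to use is the one place where a careless argument could go wrong.
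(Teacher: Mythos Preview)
Your proposal is correct and follows essentially the same approach as the paper: both arguments reduce each of the three conditions to the affineness of the constraints, exhibit $\zerobold$ as a feasible point, and invoke Lemma~12.7 of \cite{nocedal_numerical_2006} for the constraint qualification. Your version is somewhat more explicit (writing out the $2N_e+1$ affine functions and noting why LICQ is the wrong qualification to try), but the substance is identical.
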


\begin{lemma}[Verification of Assumption~\ref{assume:obj} for topology optimization.]
  \label{lem:assume:obj}
   Suppose the objective function $j: \Rbb^{N_\ubm} \times P \to \Rbb$ is twice continuously differentiable.  The objective function of the topology optimization problem~\eqref{eqn:topopt0} satisfies Assumption~\ref{assume:obj}.
  \begin{proof}
    \label{proof:assume1}
  We wish to show that the objective function $J : \Ccal \to \Rbb$, for $\Ccal$ defined by \eqref{eqn:feaset-topopt}, (C1) is twice continuous differentiable, (C2) is bounded from below, and (C3) has bounded second derivative.

  We first consider (C1). Given $J(\psibold) = j(\ubm^\ast(\rhobold^\star(\psibold)), \rhobold^\star(\psibold))$, we need to show each of the following is twice continuously differentiable: (i) $\ubm^\ast: P \to \Rbb^{N_\ubm}$; (ii) $\rhobold^\star: \Psi \to P$; (iii) $j: \Rbb^{N_\ubm} \times P \to \Rbb$.  To analyze the differentiability of (i) $\ubm^\ast: P \to \Rbb^{N_\ubm}$, we differentiate the linear elasticity equation $\Kbm(\rhobold) \ubm^\ast(\rhobold) = \fbm$ with respect to $\rhobold$ twice to obtain
  \begin{equation*}
    \pder{^2 \ubm^*}{\rhobold_p \partial \rhobold_q}
    =
    \Kbm^{-1} \left[
     - \pder{^2 \Kbm}{\rhobold_p \partial \rhobold_q}
     + \pder{\Kbm}{\rhobold_p} \Kbm^{-1} \pder{\Kbm}{\rhobold_q}
     + \pder{\Kbm}{\rhobold_q} \Kbm^{-1} \pder{\Kbm}{\rhobold_p}
     \right] \Kbm^{-1} \fbm
  \end{equation*}
  for $p,q = 1,\dots,N_e$, where all terms are evaluated at $\rhobold$.  Terms in the right hand side are continuous because the stiffness matrix $\Kbm(\rhobold)$ is nonsingular for all $\rhobold \in P$, and $\Kbm(\rhobold)$ is polynomial in $\rhobold$.  To analyze the differentiability of (ii), $\rhobold^\star: \Rbb^{N_e} \to P$, we differentiate the Helmholtz equation $\Hbm \phibold^\star(\psibold) = \bbm(\psibold)$ with respect to $\psibold$ twice to obtain
  \begin{equation*}
    \pder{^2 \phibold^\star}{\psibold_p \partial \psibold_q}
    = \Hbm^{-1} \pder{^2 \bbm}{\psibold_p \partial \psibold_q} = 0;
  \end{equation*}
  note that the second derivative vanishes because $\bbm(\psibold)$ is linear in $\psibold$.  The twice differentiability of (iii) $j: \Rbb^{N_\ubm} \times P \to \Rbb$ follows from the assumption of the lemma.

  We next consider (C2).  It suffices to show that $J: \Ccal \to \Rbb$ is continuous and $\Ccal$ is compact.  The function $J$ is continuous because it is twice continuously differentiable as proven for (C1).  The domain $\Ccal$ is compact by Lemma~\ref{lem:assume:con}.


  We finally note that (C3) follows from (C1) and the fact $\Ccal$ is compact by Lemma~\ref{lem:assume:con}.
\end{proof}
\end{lemma}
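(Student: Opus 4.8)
The plan is to verify the three requirements of Assumption~\ref{assume:obj} for the choice $F \coloneqq J$: that $J$ is twice continuously differentiable on $\Ccal$, bounded from below on $\Ccal$, and has a Hessian whose spectral norm is bounded on $\Ccal$. The backbone of the argument is to establish twice continuous differentiability by writing $J$ as a composition of maps, each of which I can show is twice continuously differentiable; the remaining two properties then follow from a compactness argument on $\Ccal$.

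First I would write $J(\psibold) = j(\ubm^\ast(\rhobold^\star(\psibold)), \rhobold^\star(\psibold))$ and, via the chain rule, reduce the smoothness of $J$ to that of the three constituent maps (i) $\ubm^\ast$, (ii) $\rhobold^\star$, and (iii) $j$. The map $j$ is $C^2$ by hypothesis. For the filter $\rhobold^\star = \rhobold^\ast \circ \phibold^\star$, I would note that $\phibold^\star$ solves the linear system $\Hbm\phibold^\star(\psibold) = \bbm(\psibold)$ with a fixed nonsingular $\Hbm$ and a load $\bbm$ that is affine in $\psibold$ (cf. \eqref{eqn:helm-res1}); hence $\phibold^\star$ is affine, and composing with the linear averaging operator $\rhobold^\ast$ (cf. \eqref{eqn:phitorho}) shows $\rhobold^\star$ is affine, in particular $C^\infty$ with vanishing second derivative. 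For the elasticity solution map $\ubm^\ast$, defined implicitly by $\Kbm(\rhobold)\ubm^\ast(\rhobold) = \fbm$, I would differentiate this identity twice in $\rhobold$ and solve for the second-order sensitivities $\partial^2 \ubm^\ast/\partial\rho_p\partial\rho_q$, obtaining an explicit expression in terms of $\Kbm^{-1}$ and the derivatives of $\Kbm$.

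The crux is to certify that this expression is continuous, and here the key structural fact is that $\Kbm(\rhobold)$ is uniformly nonsingular over the relevant domain. Since the material model $\alpha(\rho_e) = \rho_l + (1-\rho_l)\rho_e^p$ satisfies $\alpha(\rho_e) \geq \rho_l > 0$ for all $\rho_e \in [0,1]$, the assembled matrix $\Kbm(\rhobold)$ remains symmetric positive definite (after eliminating the Dirichlet degrees of freedom) for every admissible density. Moreover $\Kbm(\rhobold)$ is polynomial in $\rhobold$, so $\rhobold \mapsto \Kbm(\rhobold)^{-1}$ and all its derivatives are continuous; combined with the chain rule this establishes condition (C1). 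I expect this nonsingularity bookkeeping to be the only genuinely substantive step, the rest being routine differentiation.

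Finally, for the remaining two conditions I would invoke compactness of $\Ccal$. By Lemma~\ref{lem:assume:con}, $\Ccal$ is closed and convex, and since $\Ccal \subset [0,1]^{N_e}$ it is bounded, hence compact. Condition (C2) then follows from the extreme value theorem: $J$ is continuous (being $C^2$ from the previous step), so it attains a minimum on the compact set $\Ccal$ and is therefore bounded from below. Condition (C3) follows in the same manner, since $\psibold \mapsto \norm{\nabla^2 J(\psibold)}_2$ is continuous on the compact set $\Ccal$ and hence bounded.
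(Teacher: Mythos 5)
Your proposal is correct and follows essentially the same route as the paper's proof: establish (C1) by decomposing $J$ into the maps $\ubm^\ast$, $\rhobold^\star$, and $j$, observe that the Helmholtz filter is affine so its second derivative vanishes, differentiate the elasticity equation twice and use nonsingularity of $\Kbm(\rhobold)$, then dispatch (C2) and (C3) by compactness of $\Ccal$. Your explicit justification that $\alpha(\rho_e)\geq\rho_l>0$ makes $\Kbm(\rhobold)$ uniformly positive definite, and that $\Ccal$ is compact because it is closed and contained in $[0,1]^{N_e}$, fills in two details the paper leaves implicit, but the argument is the same.
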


\begin{lemma}[Verification of Assumption~\ref{assume:model} for topology optimization.]
  \label{lem:assume:model}
   Suppose the objective function $j: \Rbb^{N_\ubm} \times P \to \Rbb$ is twice continuously differentiable.  The approximation model introduced in Section~\ref{sec:tr:etr-topopt} for the topology optimization problem~\eqref{eqn:topopt0} satisfies Assumption~\ref{assume:model}.
\begin{proof}
  We wish to show that the ROM approximation of the objective function $J_k : \Rbb^{N_e} \to \Rbb$ (C1) is twice continuously differentiable, (C2) satisfies $J_k(\psibold^{(k)}) = J(\psibold^{(k)})$ where $\psibold^{(k)}$ is a trust-region center, (C3) satisfies $\nabla J_k(\psibold^{(k)}) = \nabla J(\psibold^{(k)})$ where $\psibold^{(k)}$ is a trust-region center, and (C4) yields $\beta_k \coloneqq 1 + \max_{\psibold \in \Ccal} \| \nabla^2 J_k(\psibold) \|$ that is uniformly bounded from the above.

  We first note that the conditions (C2) and (C3) are satisfied as a consequence of Theorems~\ref{thm:rom_output_error} and \ref{thm:rom_sensitivity_error}, respectively, and our choice of reduced basis which includes the state at the trust-region center.

  We next consider condition (C1).  The verification of this condition is similar to the verification of condition (C1) of Lemma~\ref{lem:assume:obj}.  Given $J_k(\psibold) = j(\ubm_k^\ast(\rhobold^\star(\psibold)), \rhobold^\star(\psibold))$,  we need to show each of the following is twice continuously differentiable: (i) $\ubm_k^\ast: P \to \Rbb^{N_\ubm}$; (ii) $\rhobold^\star: \Rbb^{N_e} \to P $; (iii) $j: \Rbb^{N_\ubm} \times P \to \Rbb$.  We have shown (ii) and (iii) are satisfied in the verification of Lemma~\ref{lem:assume:obj}.  To analyze the differentiability of $\ubm_k^\ast: P \to \Rbb^{N_\ubm}$, we differentiate $\hat \Kbm_k(\rhobold) \hat \ubm_k^\ast(\rhobold) = \hat \fbm$ twice and appeal to $\ubm_k^\ast(\rhobold) = \Phibold_k \hat \ubm_k^\ast(\rhobold)$ to obtain
  \begin{equation}
    \pder{^2 \ubm_k^\ast}{\rhobold_p \partial \rhobold_q}
    =
    \Phibold_k \pder{^2 \hat \ubm_k^\ast}{\rhobold_p \partial \rhobold_q}
    =
    \Phibold_k \hat \Kbm^{-1}_k \left[
     - \pder{^2 \hat \Kbm_k}{\rhobold_p \partial \rhobold_q}
     + \pder{\hat \Kbm_k}{\rhobold_p} \hat \Kbm_k^{-1} \pder{\hat \Kbm_k}{\rhobold_q}
     + \pder{\hat \Kbm_k}{\rhobold_q} \hat \Kbm_k^{-1} \pder{\hat \Kbm_k}{\rhobold_p}
     \right] \hat \Kbm_k^{-1} \hat \fbm
    \label{eq:pder_uk_pq}
  \end{equation}
  for $p,q = 1,\dots,N_e$, where all terms are evaluated at $\rhobold$.  Because $\hat \Kbm(\rhobold) \coloneqq \Phibold_k^T \Kbm(\rhobold) \Phibold_k$, $\Kbm(\rhobold)$ is symmetric positive definite for all $\rhobold \in P$, and $\Phibold_k \in \Rbb^{N_\ubm \times k}$ has orthonormal columns, the maximum singular value of $\hat \Kbm(\rhobold)^{-1}$ satisfies $\sigma_{\rm max}(\hat \Kbm(\rhobold)^{-1}) \leq \sigma_{\rm max}(\Kbm(\rhobold)^{-1}) < \infty$.  In addition, $\hat \Kbm(\rhobold)$ is polynomial in $\rhobold$ and hence is smooth in $\rhobold$. Consequently, $\pder{^2 \ubm_k^*}{\rhobold_p \partial \rhobold_q}$ is continuous.  Because (ii) and (iii) are also twice continuously differentiable as shown for condition (C1) of Lemma~\ref{lem:assume:obj}, it follows that $J_k(\psibold)$ is twice continuously differentiable for any $k$.

  We finally consider condition (C4). It suffices to show that $\{ \| \nabla^2 J_k(\psibold) \| \}_{k \in \Nbb,\psibold \in \Ccal}$ is uniformly bounded from the above.  The boundedness follows from the twice continuous differentiability of $J_k(\psibold)$ (i.e., (C1)) and the compactness of $\Ccal$.  To see that this bound is uniform, we first note that $J_k(\psibold) = j(\ubm_k^\ast(\rhobold^\star(\psibold)), \rhobold^\star(\psibold))$ and the only term that depends on $k$ is $\ubm_k^\ast$.  We next note that $\hat \Kbm(\rhobold) \coloneqq \Phibold_k^T \Kbm(\rhobold) \Phibold_k$, $\Kbm(\rhobold)$ is symmetric positive definite for all $\rhobold \in P$, and $\Phibold_k \in \Rbb^{N_\ubm \times k}$ has orthonormal columns.  It follows that $\pder{^2 \ubm_k^*}{\rhobold_p \partial \rhobold_q}$ in \eqref{eq:pder_uk_pq} is bounded independent of $k$ because the range of singular values of $\hat \Kbm_k^{-1}$, $\hat \Kbm_k$, $\pder{\hat \Kbm_k}{\rhobold_q}$, and $\pder{^2 \hat \Kbm_k}{\rhobold_p \partial \rhobold_q}$ are bounded by the range of singular values of the respective full-order model entities, $\Kbm^{-1}$, $\Kbm$, $\pder{\Kbm}{\rhobold_q}$, and $\pder{^2 \Kbm}{\rhobold_p \partial \rhobold_q}$.
\end{proof}
\end{lemma}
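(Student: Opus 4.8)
The plan is to verify, one at a time, the four conditions of Assumption~\ref{assume:model} for the ROM-based approximation model $m_k = J_k$, where $J_k(\psibold) = j(\ubm_k^\star(\psibold), \rhobold^\star(\psibold))$ and $\ubm_k^\star(\psibold) = \Phibold_{j_k}\hat\ubm_k^\star(\psibold)$ is the Galerkin ROM solution associated with the reduced basis $\Phibold_{j_k}$ of \eqref{eqn:rob-optim}. I would dispatch the consistency conditions (the value and gradient matching at the center) first, since they are essentially immediate from the \textit{a posteriori} error theory of Section~\ref{sec:rom}; then I would address the $C^2$ smoothness of $J_k$; and finally the uniform Hessian bound, which I expect to be the main obstacle.

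For the value match $J_k(\psibold^{(k)}) = J(\psibold^{(k)})$ and the gradient match $\nabla J_k(\psibold^{(k)}) = \nabla J(\psibold^{(k)})$, the key observation is that, by construction, the reduced basis \eqref{eqn:rob-optim} always contains both $\ubm^\star(\psibold^{(k)})$ and $\lambdabold^\star(\psibold^{(k)})$ in its image, \emph{regardless of $n_k$}. By Remarks~\ref{rem:rom_exact} and \ref{rem:rom_adjoint} the ROM then reproduces the primal and adjoint solutions exactly at the center, so both the primal residual $\rbm(\ubm_k^\star(\psibold^{(k)});\rhobold^\star(\psibold^{(k)}))$ and the adjoint residual vanish there. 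Substituting these zero residuals into the objective error bound of Theorem~\ref{thm:rom_output_error} forces $|J(\psibold^{(k)}) - J_k(\psibold^{(k)})| = 0$, and into the gradient error bound of Theorem~\ref{thm:rom_sensitivity_error} forces $\norm{\nabla J(\psibold^{(k)}) - \nabla J_k(\psibold^{(k)})}_2 = 0$. (Both bounds are finite because the integral matrices $\Bbm,\Cbm,\Dbm$ are evaluated along a degenerate segment at the center.) This establishes the two consistency conditions.

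For twice continuous differentiability of $J_k$ on $\Ccal$, I would mirror the argument used for the HDM objective in Lemma~\ref{lem:assume:obj}, reusing that $\rhobold^\star:\Psi\to P$ is $C^2$ (shown there) and that $j$ is $C^2$ by hypothesis. The only new piece is the smoothness of the ROM state map $\ubm_k^\star: P \to \Rbb^{N_\ubm}$. I would differentiate the reduced system $\hat\Kbm_k(\rhobold)\hat\ubm_k^\star(\rhobold) = \hat\fbm$ twice with respect to $\rhobold$ to obtain an explicit formula for $\pder{^2\ubm_k^\star}{\rhobold_p\partial\rhobold_q}$ in terms of $\hat\Kbm_k^{-1}$ and the derivatives of $\hat\Kbm_k$. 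Continuity of these terms follows because $\hat\Kbm_k(\rhobold) = \Phibold_{j_k}^T\Kbm(\rhobold)\Phibold_{j_k}$ is symmetric positive definite for every $\rhobold\in P$ (Remark~\ref{rem:rom_wellposed}) and polynomial in $\rhobold$. Composing the smooth maps $\rhobold^\star$, $\ubm_k^\star$, and $j$ then yields $J_k\in C^2(\Ccal)$ for each fixed $k$.

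The hard part will be the last condition: the bound $\beta_k = 1 + \max_{\psibold\in\Ccal}\norm{\nabla^2 J_k(\psibold)}_2$ must hold with a constant $\beta$ \emph{independent of $k$}, whereas $C^2$-ness plus compactness of $\Ccal$ (Lemma~\ref{lem:assume:con}) only gives a bound for each fixed $k$. The plan is to obtain $k$-free control of every factor appearing in $\pder{^2\ubm_k^\star}{\rhobold_p\partial\rhobold_q}$ by exploiting that $\Phibold_{j_k}$ has orthonormal columns, hence is an isometry onto its range: $\norm{\Phibold_{j_k}\hat\wbm}_2 = \norm{\hat\wbm}_2$. Consequently the Rayleigh quotient of $\hat\Kbm_k$ coincides with that of $\Kbm$ restricted to $\mathrm{Img}(\Phibold_{j_k})$, which confines the spectrum of $\hat\Kbm_k$ to $[\lambda_{\min}(\Kbm),\lambda_{\max}(\Kbm)]$ and gives $\sigma_{\max}(\hat\Kbm_k^{-1}) \le \sigma_{\max}(\Kbm^{-1})$ uniformly in $k$. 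The same compression then yields $\norm{\hat\fbm}_2 \le \norm{\fbm}_2$ and the spectral-norm bounds $\norm{\pder{\hat\Kbm_k}{\rhobold_q}}_2 \le \norm{\pder{\Kbm}{\rhobold_q}}_2$ and $\norm{\pder{^2\hat\Kbm_k}{\rhobold_p\partial\rhobold_q}}_2 \le \norm{\pder{^2\Kbm}{\rhobold_p\partial\rhobold_q}}_2$; the subtle point to get right is that $\norm{\Phibold_{j_k}^T A\,\Phibold_{j_k}}_2 \le \norm{A}_2$ must be invoked for the \emph{non-symmetric} derivative matrices $A = \pder{\Kbm}{\rhobold_q}$, which again follows purely from $\Phibold_{j_k}$ being an isometry. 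Since each full-order factor is continuous in $\rhobold$ on the compact set $P$ and the composing maps $\rhobold^\star$, $j$ carry no $k$-dependence, I would conclude that $\norm{\nabla^2 J_k}_2$ is bounded by a constant assembled entirely from HDM quantities, giving the required uniform $\beta$.
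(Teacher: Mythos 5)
Your proposal is correct and takes essentially the same route as the paper's proof: consistency at the trust-region center via exact ROM reproduction of the primal and adjoint (fed through the error bounds of Theorems~\ref{thm:rom_output_error} and \ref{thm:rom_sensitivity_error}), $C^2$-smoothness by twice differentiating the reduced system $\hat\Kbm_k \hat\ubm_k^\ast = \hat\fbm$, and the uniform Hessian bound by using the orthonormal columns of $\Phibold_{j_k}$ to bound the spectra of the reduced operators by those of the full-order ones. One minor factual slip that does not affect the argument: the derivative matrices $\pder{\Kbm}{\rho_q} = \alpha'(\rho_q)\Pbm_q\Kbm_q\Pbm_q^T$ are in fact symmetric (positive semidefinite) here, though the isometry inequality $\norm{\Phibold_{j_k}^T \Abm \Phibold_{j_k}}_2 \leq \norm{\Abm}_2$ you invoke holds regardless.
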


\begin{lemma}[Verification of Assumption~\ref{assume:trcon} for topology optimization.]
  \label{lem:assume:trcon}
  The trust-region constraint introduced in Section~\ref{sec:tr:etr-topopt} for the topology optimization problem~\eqref{eqn:topopt0} satisfies Assumption~\ref{assume:trcon}.

\begin{proof}
  We wish to show that, for  $\vartheta_k(\psibold) \coloneqq \| \rbm(\ubm_{k}^\ast(\rhobold^\star(\psibold));\rhobold^\star(\psibold))\|^{1-\epsilon}$ and $\epsilon > 0$ but $\epsilon \ll 1$, (C1) $\vartheta_k(\cdot)$ is twice continuously differentiable, (C2) $\max_{\psibold \in \Ccal} \| \nabla \vartheta_k(\psibold) \|$ is uniformly bounded from the above, (C3) $\vartheta_k(\psibold^{(k)}) = 0$ for the trust-region center $\psibold^{(k)}$, and (C4) there exist $\zeta > 0$ and $\nu > 1$ independent of $k$ such that $|J(\psibold) - J_k(\psibold)| \leq \zeta \vartheta_k(\psibold)^\nu$ for all $k \in \Nbb$.

  We first consider (C1). Given (C1) of Lemma~\ref{lem:assume:model} is satisfied, we only need to show that $\rbm: \Rbb^{N_\ubm} \times P \to \Rbb^{N_\ubm}$ is twice continuously differentiable.  We recall that $\rbm(\ubm, \rhobold) = \Kbm(\rhobold) \ubm - \fbm$.  We observe that $\rbm(\ubm,\rhobold)$ is twice continuously differentiable in $\ubm$ and $\rhobold$ because $\rbm(\ubm,\rhobold)$ is linear in $\ubm$ and $\Kbm(\rhobold)$ is polynomial in $\rhobold$. 

  We next note that (C2) is satisfied because $\rbm(\ubm_{k}^\ast(\rhobold^\star(\psibold));\rhobold^\star(\psibold))$ is twice continuously differentiable in $\psibold$ and $\Ccal$ is compact.  The uniform boundedness follows from the same argument as Lemma~\ref{lem:assume:model} (C4).

  We then note that (C3) is satisfied because $\ubm_k^\star(\psibold^{(k)}) = \ubm(\psibold^{(k)})$ for any trust-region center $\psibold^{(k)}$ as a consequence of Theorem~\ref{thm:rom_primal_error} and our choice of reduced basis which includes the state at the trust-region center.

  We finally consider (C4). We first recall Theorem~\ref{thm:rom_output_error_simple}: for any $\psibold \in \Bcal \subset \Psi$, 
   \begin{align*}
    | J(\psibold) - J_k(\psibold) |
    \leq \| \lambdabold^\star(\psibold) \|_2 \| \rbm(\ubm_k^\star(\psibold); \rhobold^\star(\psibold)) \|_2 + \sigma_{\rm max}(\Bbm(\psibold)) \| \rbm(\ubm_k^\star(\psibold); \rhobold^\star(\psibold)) \|_2^2,
   \end{align*}
   where $\sigma_{\rm max}(\Bbm(\psibold))$ is the maximum singular value of 
   \begin{equation}
     \label{eq:tr_proof_Bbm}
    \Bbm(\psibold) \coloneqq \Kbm(\rhobold^\star(\psibold))^{-1} \left[ \int_{\theta = 0}^1 \pder{^2j}{\ubm^2}(\theta \ubm^\star(\psibold) + (1- \theta) \ubm^\star_k(\psibold)) \theta d\theta \right] \Kbm^{-1}(\rhobold^\star(\psibold)).
   \end{equation}
   We immediately observe that the desired inequality, $|J(\psibold) - J_k(\psibold)| \leq \zeta (\| \rbm(\ubm_{k}^\star(\psibold);\rhobold^\star(\psibold))\|^{1-\epsilon}) ^\nu$, is satisfied for $\zeta = \max \left\{ \max_{\psibold \in \Bcal_k} \| \lambdabold^\star(\psibold) \|_2 , \max_{\psibold \in \Bcal_k} \sigma_{\rm max}(\Bbm(\psibold)) \Delta_{\rm max} \right\}$ and $\nu = 1/(1-\epsilon) > 1$.

   It remains to show that $\zeta$ is bounded independent of $k$.  To this end, we first note that $\max_{\psibold \in \Bcal_k} \| \lambdabold^\star(\psibold) \|_2$ is bounded because the adjoint problem \eqref{eqn:linelast-adj1} is well-posed for all $\psibold \in \Psi$ and $\Bcal_k \subset \Psi$ is compact.
   We second note that $\max_{\psibold \in \Bcal_k} \sigma_{\rm max}(\Bbm(\psibold)) \Delta_{\rm max}$ is bounded independent of $k$ because all terms of $\Bbm(\psibold)$ in \eqref{eq:tr_proof_Bbm} are bounded: $\| \Kbm(\rhobold^\star(\psibold))^{-1} \|_2$ is bounded because $\Kbm(\rhobold)$ is nonsingular for all $\rhobold \in \Bcal_k \subset P$ and $\Bcal_k$ is compact;  the term in the square bracket is bounded because (i) $j$ is twice continuously differentiable by assumption, (ii) $\ubm^\star(\psibold)$ and $\ubm^\star_k(\psibold)$ are continuous in $\psibold \in \Bcal_k \subset P$, and (iii) $\psibold$ belongs to a compact set $\Bcal_k$.

   We note that for $\epsilon = 0$ the condition (C4) is not satisfied but conditions (C1)--(C3) are satisfied.
\end{proof}
\end{lemma}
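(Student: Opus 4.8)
The plan is to verify, one by one, the four conditions of Assumption~\ref{assume:trcon} for the error-aware trust-region constraint $\vartheta_k(\psibold) = \norm{\rbm(\ubm_k^\star(\psibold);\rhobold^\star(\psibold))}_2^{1-\epsilon}$ with $\epsilon\in(0,1)$, leaning on three earlier results: the regularity of the ROM objective and state established in Lemma~\ref{lem:assume:model}, the exactness of the ROM primal state at a trust-region center (Theorem~\ref{thm:rom_primal_error} together with the basis choice \eqref{eqn:rob-optim}), and the \emph{a posteriori} output bound of Theorem~\ref{thm:rom_output_error_simple}. The exponent will come out to be $\nu = 1/(1-\epsilon)$, which exceeds one precisely because $\epsilon>0$; this is the structural reason the strictly positive $\epsilon$ is needed for the fourth condition.

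For the regularity condition (C1) I would reduce twice continuous differentiability of $\vartheta_k$ to that of the residual map $\psibold\mapsto\rbm(\ubm_k^\star(\psibold);\rhobold^\star(\psibold))$. Since $\rbm(\ubm,\rhobold)=\Kbm(\rhobold)\ubm-\fbm$ is linear in $\ubm$ and polynomial (hence smooth) in $\rhobold$, and since $\ubm_k^\star$ and $\rhobold^\star$ are twice continuously differentiable by the argument used for Lemma~\ref{lem:assume:model}, the residual map is $C^2$ by composition. The vanishing condition (C3), $\vartheta_k(\psibold^{(k)})=0$, is immediate: the basis \eqref{eqn:rob-optim} contains $\ubm^\star(\psibold^{(k)})$, so by Theorem~\ref{thm:rom_primal_error} the ROM reproduces the state exactly at the center and the residual there is zero.

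For the uniform gradient bound (C2) I would combine the $C^2$ regularity of the residual map with the compactness of $\Ccal$ (Lemma~\ref{lem:assume:con}); the essential point is \emph{uniformity in $k$}, which I would obtain exactly as in Lemma~\ref{lem:assume:model}(C4), by dominating every ROM quantity ($\hat\Kbm_k$, its inverse, and their density-derivatives) by the corresponding HDM quantity, whose singular-value ranges are independent of the basis. The crux is the error bound (C4). Starting from Theorem~\ref{thm:rom_output_error_simple}, $|J(\psibold)-J_k(\psibold)|\leq\norm{\lambdabold^\star(\psibold)}_2\norm{\rbm}_2+\sigma_{\rm max}(\Bbm(\psibold))\norm{\rbm}_2^2$, I would absorb the quadratic term using $\norm{\rbm}_2\leq\Delta_{\text{max}}^{1/(1-\epsilon)}$ on $\Bcal_k$ (since $\vartheta_k\leq\Delta_k\leq\Delta_{\text{max}}$), reducing the right-hand side to a constant multiple of $\norm{\rbm}_2 = \vartheta_k^{1/(1-\epsilon)} = \vartheta_k^{\nu}$. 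Setting $\zeta=\max\{\max_{\psibold\in\Bcal_k}\norm{\lambdabold^\star(\psibold)}_2,\ \max_{\psibold\in\Bcal_k}\sigma_{\rm max}(\Bbm(\psibold))\,\Delta_{\text{max}}^{1/(1-\epsilon)}\}$ then yields the claim with $\nu>1$.

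The main obstacle will be showing that $\zeta$ is bounded \emph{independently of $k$}. I would argue that $\max\norm{\lambdabold^\star}_2$ is finite because the adjoint problem \eqref{eqn:linelast-adj1} is well-posed on the compact $\Bcal_k\subset\Psi$, and that $\sigma_{\rm max}(\Bbm(\psibold))$ is uniformly bounded because every factor of $\Bbm$ in \eqref{eq:rom_Bbm} involves only HDM entities — $\Kbm^{-1}$ (nonsingular on compact $P$) and $\partial^2 j/\partial\ubm^2$ evaluated between the continuous curves $\ubm^\star$ and $\ubm_k^\star$ — so no dependence on the growing basis dimension enters. A secondary point meriting care, on which the reductions in (C1)--(C2) quietly rely, is the behavior of the sublinear power map $t\mapsto t^{1-\epsilon}$ and the Euclidean norm at the trust-region center, where $\rbm$ vanishes; I would establish the regularity and gradient-bound claims on the set where $\rbm\neq\zerobold$ and treat the center as the isolated point at which $\vartheta_k$ attains its minimum value $0$.
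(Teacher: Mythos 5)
Your proof follows the paper's argument essentially verbatim: the same condition-by-condition verification, invoking Lemma~\ref{lem:assume:model} for the regularity and uniformity claims in (C1)--(C2), Theorem~\ref{thm:rom_primal_error} plus the basis choice \eqref{eqn:rob-optim} for (C3), and Theorem~\ref{thm:rom_output_error_simple} with $\nu = 1/(1-\epsilon)$ and the same two-term $\zeta$ (with the same $k$-independence argument via HDM domination and compactness) for (C4). Your two refinements---carrying the exponent correctly, $\norm{\rbm}_2 \leq \Delta_\text{max}^{1/(1-\epsilon)}$ on $\Bcal_k$ rather than the paper's $\Delta_\text{max}$ inside $\zeta$, and explicitly flagging that $t \mapsto t^{1-\epsilon}$ and $\norm{\cdot}_2$ fail to be differentiable exactly at the trust-region center where the residual vanishes---are if anything improvements in rigor over the published proof, which silently glosses both points.
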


\section{Proofs of error estimates}
We prove Theorems~\ref{thm:rom_primal_error}--\ref{thm:rom_output_error_simple}.  For notational brevity, we suppress the arguments $\psibold$ and $\rhobold^\star(\psibold)$ for all functions and forms throughout the proofs in this section.
\label{sec:rom_proofs}
\begin{proof}[Proof of Theorem~\ref{thm:rom_primal_error}]
  By the definition of the residual,
  \begin{equation}
    \Kbm (\ubm^\star - \ubm_k^\star) = \fbm - \Kbm \ubm_k^\star = -\rbm(\ubm_k^\star).
    \label{eq:rom_proof_u_error}
  \end{equation}
  It follows that
  $\Kbm^{1/2} (\ubm^\star - \ubm_k^\star) = - \Kbm^{-1/2} \rbm(\ubm_k^\star)$,
  and hence
  \begin{equation*}
    \| \ubm^\star - \ubm_k^\star\|_{\Kbm}
    = \| \Kbm^{-1/2} \rbm(\ubm_k^\star) \|_2
    \leq \sigma_{\mathrm{min}}(\Kbm)^{-1/2} \| \rbm(\ubm_k^\star) \|_2,    
  \end{equation*}
  which is the desired result.
\end{proof}
\begin{proof}[Proof of Theorem~\ref{thm:rom_adjoint_error}]
  We appeal to the definition of the adjoint residual and the mean-value theorem to obtain
  \begin{equation}
    \Kbm (\lambdabold^\star - \lambdabold_k^\star)
    =
    \pder{j}{\ubm}(\ubm^\star)^T - \pder{j}{\ubm}(\ubm^\star_k)^T - \rbm^{\mathrm{adj}}(\lambdabold^\star_k)
    =
    \overline{\pder{^2j}{\ubm^2}}(\ubm^\star,\ubm^\star_k) (\ubm^\star - \ubm^\star_k)
    - \rbm^{\mathrm{adj}}(\lambdabold^\star_k),
    \label{eq:rom_proof_lambda_error}
  \end{equation}
  where $\overline{\pder{^2j}{\ubm^2}}(\ubm^\star,\ubm^\star_k)  \in \Rbb^{N_\ubm \times N_\ubm}$ is the mean-value linearized Hessian given by
  \begin{equation}
    \overline{\pder{^2j}{\ubm^2}}(\ubm^\star,\ubm^\star_k) 
    \coloneqq
    \int_{\theta = 0}^1 \pder{^2j}{\ubm^2}(\theta \ubm^\star + (1- \theta) \ubm^\star_k) d\theta.
    \label{eq:rom_proof_j2mv}
  \end{equation}
  It follows that
  \begin{align*}
    \Kbm^{1/2} (\lambdabold^\star - \lambdabold_k^\star)
    &=
    \Kbm^{-1/2} \overline{\pder{^2j}{\ubm^2}}(\ubm^\star,\ubm^\star_k) \Kbm^{-1} \Kbm (\ubm^\star - \ubm^\star_k) - \Kbm^{-1/2} \rbm^{\mathrm{adj}}(\lambdabold^\star_k)
    \\
    &=
    -\Kbm^{-1/2} \overline{\pder{^2j}{\ubm^2}}(\ubm^\star,\ubm^\star_k) \Kbm^{-1} \rbm(\ubm^\star_k) - \Kbm^{-1/2} \rbm^{\mathrm{adj}}(\lambdabold^\star_k)
    =
    -\Abm \rbm(\ubm^\star_k) - \Kbm^{-1/2} \rbm^{\mathrm{adj}}(\lambdabold^\star_k),
  \end{align*}
  where $\Abm \coloneqq \Kbm^{-1/2} \overline{\pder{^2j}{\ubm^2}}(\ubm^\star,\ubm^\star_k) \Kbm^{-1}$. 
  Hence
  \begin{equation*}
    \| \lambdabold^\star - \lambdabold_k^\star \|_{\Kbm}
    \leq
      \sigma_{\mathrm{max}}(\Abm) \| \rbm(\ubm^\star_k) \|_2 + 
      \sigma_{\mathrm{min}}(\Kbm)^{-1/2} \| \rbm^{\mathrm{adj}}(\lambdabold^\star_k) \|_2,
  \end{equation*}
  which is the desired relationship.
\end{proof}
\begin{proof}[Proof of Theorem~\ref{thm:rom_output_error}]
  We first note that
  \begin{align}
    J - J_k
    &= j(\ubm^\star) - j(\ubm^\star_k)
    = \int_{\theta = 0}^1 \pder{j}{\ubm}(\theta \ubm^\star + (1-\theta) \ubm^\star_k) (\ubm^\star - \ubm^\star_k) d\theta
    \notag \\
    &=
    - \int_{\theta = 0}^1 (\ubm^\star - \ubm^\star_k)^T \pder{^2j}{\ubm^2}(\theta \ubm^\star + (1-\theta) \ubm^\star_k)(\ubm^\star - \ubm^\star_k) \theta d\theta
    +
    \pder{j}{\ubm}(\ubm^\star) (\ubm^\star - \ubm^\star_k)
    \notag \\
    &=- (\ubm^\star - \ubm^\star_k)^T \left[ \int_{\theta = 0}^1  \pder{^2j}{\ubm^2}(\theta \ubm^\star + (1-\theta) \ubm^\star_k) \theta d\theta \right](\ubm^\star - \ubm^\star_k)
    +
    (\Kbm\lambdabold^\star)^T (\ubm^\star - \ubm^\star_k),
    \label{eq:rom_proof_jmjk}
  \end{align}
  where the first equality follows from the definition of $J$ and $J_k$, the second equality follows from mean-value theorem, the third equality follows from integration by parts, and the last equality follows from the adjoint equation $\Kbm \lambdabold^\star = \pder{j}{\ubm}^T$.    We then appeal to Galerkin orthogonality (i.e., $\vbm_k^T \Kbm (\ubm^\star - \ubm^\star_k) = 0$ $\forall \vbm_k \in \text{Img}(\Phibold_k)$) to obtain
  \begin{equation*}
    J - J_k
    =-  (\ubm^\star - \ubm^\star_k)^T \left[ \int_{\theta = 0}^1 \pder{^2j}{\ubm^2}(\theta \ubm^\star + (1-\theta) \ubm^\star_k) \theta d\theta \right] (\ubm^\star - \ubm^\star_k)
    +
    (\lambdabold^\star - \lambdabold^\star_k) {}^T \Kbm (\ubm^\star - \ubm^\star_k),
  \end{equation*}
  It follows that
  \begin{align*}
    &J - J_k
    \\
    &=
    - (\ubm^\star - \ubm^\star_k)^T \Kbm \Kbm^{-1} \left[ \int_{\theta = 0}^1 \pder{^2j}{\ubm^2}(\theta \ubm^\star + (1-\theta) \ubm^\star_k) \theta d\theta \right] \Kbm^{-1} \Kbm (\ubm^\star - \ubm^\star_k)
    + (\lambdabold^\star - \lambdabold^\star_k)^T \Kbm \Kbm^{-1} \Kbm (\ubm^\star - \ubm^\star_k)
    \\
    &=
    - \rbm(\ubm^\star_k)^T \Kbm^{-1} \left[ \int_{\theta = 0}^1 \pder{^2j}{\ubm^2}(\theta \ubm^\star + (1-\theta) \ubm^\star_k) \theta d\theta \right] \Kbm^{-1} \rbm(\ubm^\star_k)
    + \rbm^{\mathrm{adj}}(\lambdabold^\star_k)^T \Kbm^{-1} \rbm(\ubm^\star_k)
    \\
    &=
    - \rbm(\ubm^\star_k)^T \Bbm \rbm(\ubm^\star_k)
    + \rbm^{\mathrm{adj}}(\lambdabold^\star_k)^T \Kbm^{-1} \rbm(\ubm^\star_k)    
  \end{align*}
  where
  \begin{equation}
    \label{eq:rom_proof_Bbm}
    \Bbm \coloneqq \Kbm^{-1} \left[ \int_{\theta = 0}^1 \pder{^2j}{\ubm^2}(\theta \ubm^\star + (1-\theta) \ubm^\star_k) \theta d\theta \right] \Kbm^{-1}.
  \end{equation}
  
  We hence conclude that
  \begin{equation*}
    | J- J_k |
    \leq
    \sigma_{\rm max}(\Bbm) \| \rbm(\ubm^\star_k) \|^2_2
    + \sigma_{\rm min}(\Kbm)^{-1} \|  \rbm^{\mathrm{adj}}(\lambdabold^\star_k) \|_2 \| \rbm(\ubm^\star_k) \|_2,
  \end{equation*}
  which is the desired relationship.
\end{proof}
\begin{proof}[Proof of Theorem~\ref{thm:rom_sensitivity_error}]
  For clarity, we use tensor notation, with implied sum on repeated indices. We first note that, by \eqref{eq:pap_sensitivity},
  \begin{align*}
    (\nabla J - \nabla J_k)_p &=
    \left( \pder{j}{\rhobold_q}(\ubm^\star) - \lambdabold^\star_i \pder{\rbm_i}{\rhobold_q}(\ubm^\star)
    - \pder{j}{\rhobold_q}(\ubm^\star_k) + \lambdabold^\star_{k,i} \pder{\rbm_i}{\rhobold_q}(\ubm^\star_k)
    \right) \pder{\rhobold^\star_q}{\psibold_p}
    \\
    &=
    \left( \pder{j}{\rhobold_q}(\ubm^\star) - \pder{j}{\rhobold_q}(\ubm^\star_k)
    - \lambdabold^\star_i \pder{\rbm_i}{\rhobold_q}(\ubm^\star)
    + \lambdabold^\star_{k,i} \pder{\rbm_i}{\rhobold_q}(\ubm^\star)
    - \lambdabold^\star_{k,i} \pder{\rbm_i}{\rhobold_q}(\ubm^\star)
    + \lambdabold^\star_{k,i} \pder{\rbm_i}{\rhobold_q}(\ubm^\star_k)     
    \right) \pder{\rhobold^\star_q}{\psibold_p}
    \\
    &=
    \left(
    \overline{\pder{^2j}{\rhobold_q \partial \ubm_s}}(\ubm^\star,\ubm^\star_k)(\ubm^\star - \ubm^\star_k)_s
    - (\lambdabold^\star - \lambdabold^\star_k)_i \pder{\rbm_i}{\rhobold_q}(\ubm^\star)
    - \lambdabold^\star_{k,i} \overline{\pder{^2\rbm_i}{\rhobold_q \partial \ubm_s}}(\ubm^\star,\ubm^\star_k)(\ubm^\star - \ubm^\star_k)_s
    \right) \pder{\rhobold^\star_q}{\psibold_p}
    \\
    &=
    \left[
    \left( \overline{\pder{^2j}{\rhobold_q \partial \ubm_s}}(\ubm^\star,\ubm^\star_k) - \lambdabold^\star_{k,i} \overline{\pder{^2\rbm_i}{\rhobold_q \partial \ubm_s}}(\ubm^\star,\ubm^\star_k) \right) (\ubm^\star - \ubm^\star_k)_s
    - (\lambdabold^\star - \lambdabold^\star_k)_i \pder{\rbm_i}{\rhobold_q}(\ubm^\star)
    \right] \pder{\rhobold^\star_q}{\psibold_p},
  \end{align*}
  where the bared quantities are the mean-value linearizations given by
  $\overline{\pder{^2j}{\rhobold_q \partial \ubm_s}}(\ubm^\star,\ubm^\star_k)
    \coloneqq
    \int_{\theta = 0}^1 \pder{^2j}{\rhobold_q \partial \ubm_s}(\theta \ubm^\star + (1-\theta) \ubm^\star_k) d\theta$
    and 
    $\overline{\pder{^2\rbm}{\rhobold_q \partial \ubm_s}}(\ubm^\star,\ubm^\star_k)
    \coloneqq
    \int_{\theta = 0}^1 \pder{^2\rbm}{\rhobold_q \partial \ubm_s}(\theta \ubm^\star + (1-\theta) \ubm^\star_k) d\theta$.
  We now substitute~\eqref{eq:rom_proof_lambda_error} and \eqref{eq:rom_proof_u_error} to obtain
  \begin{align*}
    &(\nabla J - \nabla J_k)_p
    \\
    &
    =\left[
      \left( \overline{\pder{^2j}{\rhobold_q \partial \ubm_s}}(\ubm^\star,\ubm^\star_k) - \lambdabold^\star_{k,i} \overline{\pder{^2\rbm_i}{\rhobold_q \partial \ubm_s}}(\ubm^\star,\ubm^\star_k) \right) (\ubm^\star - \ubm^\star_k)_s
      \right.
      \\
      & \qquad \left.
      - (\Kbm^{-1})_{im} \left(
      \overline{\pder{^2j}{\ubm_m \partial \ubm_s}}(\ubm^\star,\ubm^\star_k) (\ubm^\star - \ubm^\star_k)_s
      - \rbm^{\mathrm{adj}}(\lambdabold^\star_k)_m \right)
      \pder{\rbm_i}{\rhobold_q}(\ubm^\star)
      \right] \pder{\rhobold^\star_q}{\psibold_p}
    \\
    &=\left[
      \left( \overline{\pder{^2j}{\rhobold_q \partial \ubm_s}}(\ubm^\star,\ubm^\star_k)
      - \lambdabold^\star_{k,i} \overline{\pder{^2\rbm_i}{\rhobold_q \partial \ubm_s}}(\ubm^\star,\ubm^\star_k)
      - \pder{\rbm_i}{\rhobold_q}(\ubm^\star) (\Kbm^{-1})_{il} \overline{\pder{^2j}{\ubm_l \partial \ubm_s}}(\ubm^\star,\ubm^\star_k) 
      \right)
      (-\Kbm^{-1}_{sm}  \rbm(\ubm^\star_k)_m) 
      \right.
      \\
      & \qquad \left.
      + \pder{\rbm_i}{\rhobold_q}(\ubm^\star) (\Kbm^{-1})_{im} 
       \rbm^{\mathrm{adj}}(\lambdabold^\star_k)_m
      \right] \pder{\rhobold^\star_q}{\psibold_p}
  \end{align*}
  The expression simplifies to
  \begin{equation*}
    (\nabla J - \nabla J_k)_p
    =
    \Cbm_{pm} \rbm(\ubm^\star_k)_m
    + \Dbm_{pm} \rbm^{\mathrm{adj}}(\lambdabold^\star_k)_m,
  \end{equation*}
  where the entries of matrices $\Cbm \in \Rbb^{N_e \times N_\ubm}$ and $\Dbm \in \Rbb^{N_e \times N_\ubm}$ are given by
  \begin{align*}
    \Cbm_{pm}
    &\coloneqq
    - \pder{\rhobold^\star_q}{\psibold_p}  \left( \overline{\pder{^2j}{\rhobold_q \partial \ubm_s}}(\ubm^\star,\ubm^\star_k)
    - \lambdabold^\star_{k,i} \overline{\pder{^2\rbm_i}{\rhobold_q \partial \ubm_s}}(\ubm^\star,\ubm^\star_k)
    - \pder{\rbm_i}{\rhobold_q}(\ubm^\star) (\Kbm^{-1})_{il} \overline{\pder{^2j}{\ubm_l \partial \ubm_s}}(\ubm^\star,\ubm^\star_k) 
    \right)
    \Kbm^{-1}_{sm},\\
    \Dbm_{pm}
    &\coloneqq
    \pder{\rhobold^\star_q}{\psibold_p}
    \pder{\rbm_i}{\rhobold_q}(\ubm^\star) (\Kbm^{-1})_{im}  .
  \end{align*}
  It follows that
  \begin{equation*}
    \| \nabla J - \nabla J_k \|_2
    =
    \sigma_{\rm max}(\Cbm_{pm}) \| \rbm(\ubm^\star_k) \|_2
    + \sigma_{\rm max}(\Dbm_{pm}) \| \rbm^{\mathrm{adj}}(\lambdabold^\star_k) \|_2,
  \end{equation*}
  which is the desired relationship.
\end{proof}

\begin{proof}[Proof of Theorem~\ref{thm:rom_output_error_simple}]
  We begin with \eqref{eq:rom_proof_jmjk} and appeal to \eqref{eq:rom_proof_u_error} to obtain
  \begin{align*}
   |J - J_k|
   &=
   \left| \lambdabold^\star {}^T \Kbm (\ubm^\star - \ubm^\star_k) - (\ubm^\star - \ubm^\star_k)^T \left[ \int_{\theta = 0}^1  \pder{^2j}{\ubm^2}(\theta \ubm^\star + (1-\theta) \ubm^\star_k) \theta d\theta \right]  (\ubm^\star - \ubm^\star_k)
   \right|
   \\
   &= \left| -\lambdabold^\star {}^T \rbm(\ubm_k^\star)
   - \rbm(\ubm^\star_k)^T \Bbm \rbm(\ubm^\star_k) \right|
   \\
   &\leq \| \lambdabold^\star \|_2 \| \rbm(\ubm_k^\star) \|_2
   +
   \sigma_{\rm max}(\Bbm) \| \rbm(\ubm^\star_k) \|_2^2,
  \end{align*}
  where $\Bbm$ is given by \eqref{eq:rom_proof_Bbm}.
\end{proof}

\section*{Acknowledgments}
This material is based upon work supported by the Air Force Office of
Scientific Research (AFOSR) under award number FA9550-20-1-0236.
The content of this publication does not necessarily reflect the position
or policy of any of these supporters, and no official endorsement
should be inferred.


\bibliographystyle{plain}
\bibliography{biblio}

\end{document}